\pdfoutput=1

\documentclass[options]{article}
\usepackage{amsmath}
\usepackage{amsfonts}
\usepackage{amssymb}
\usepackage{amsthm}
\usepackage{mathtools}
\usepackage{titling}
\usepackage{authblk}
\usepackage{commath}
\usepackage{xcolor}
\usepackage{graphicx}
\usepackage{subcaption}
\usepackage{bbm}
\usepackage{cite,hyperref}
\usepackage{color,soul}
\usepackage{bm}
\usepackage{mathrsfs}

\usepackage{enumitem}
\newtheorem{remark}{Remark}[section]
\newtheorem{theorem}[remark]{Theorem}
\newtheorem{corollary}[remark]{Corollary}
\newtheorem{lemma}[remark]{Lemma}
\usepackage[margin=1in]{geometry}

\newtheorem{proposition}[remark]{Proposition}

\numberwithin{equation}{section}
\allowdisplaybreaks

\newcommand{\nocontentsline}[3]{}
\newcommand{\tocless}[2]{\bgroup\let\addcontentsline=\nocontentsline#1{#2}\egroup}
\usepackage{tocloft}
\usepackage{blindtext}
 \cftpagenumbersoff{section}
 \cftpagenumbersoff{subsection}
 \cftpagenumbersoff{subsubsection}
\makeatletter \renewcommand{\@cftmaketoctitle}{} \makeatother
\setlength{\cftbeforesecskip}{0pt}
\setlength{\cftsecindent}{\cftsubsecindent}

\usepackage{bookmark}


\DeclareMathOperator*{\esssup}{ess\,sup}

\title{On the Obstacle Problem in Fractional Generalised Orlicz Spaces}
\author{Catharine W.K. Lo\thanks{Department of Mathematics, City University of Hong Kong, Hong Kong SAR, China\\ Email address: wingkclo@cityu.edu.hk} \, and Jos\'e Francisco Rodrigues\thanks{CMAFcIO -- Departamento de Matem\'atica, Faculdade de Ci\^encias, Universidade de Lisboa P-1749-016 Lisboa, Portugal\\ Email address: jfrodrigues@ciencias.ulisboa.pt}}
\date{}

\begin{document}
\maketitle

\begin{abstract}
    We consider the one and the two obstacles problems for the nonlocal nonlinear anisotropic $g$-Laplacian $\mathcal{L}_g^s$, with $0<s<1$. We prove the strict T-monotonicity of $\mathcal{L}_g^s$  and we obtain the Lewy-Stampacchia inequalities. We consider the approximation of the solutions through semilinear problems, for which we prove a global $L^\infty$-estimate, and we extend the local Hölder regularity to the solutions of the obstacle problems in the case of the fractional $p(x,y)$-Laplacian operator. We make further remarks on a few elementary properties of related capacities in the fractional generalised Orlicz framework, with a special reference to the Hilbertian nonlinear case in fractional Sobolev spaces.
\end{abstract}

\tocless\section{Introduction}
\hypertarget{introduction}{}
\bookmark[level=section,dest=introduction]{Introduction}

In this work, we consider nonlocal nonlinear anisotropic operators of the $g$-Laplacian type $\mathcal{L}_g^s:W^{s,G_{:}}_0(\Omega)\to W^{-s,G_{:}^*}(\Omega)$, in Lipschitz bounded domains $\Omega\subset\mathbb{R}^d$, as defined in \cite{ByunKimOk2022MathAnnLocalHolderFracGLapAnisotropic,ChakerKimWeidner2022MathAnnHarnackFracAnisotropicGLap,ChakerKimWeidner2022CVPDERegGLap} by 
\begin{equation}\label{Lgdef}\langle\mathcal{L}_g^su,v\rangle=\int_{\mathbb{R}^d}\int_{\mathbb{R}^d}g\left(x,y,|\delta^s u(x,y)|\right)\delta^s u(x,y)\delta^s v(x,y)\frac{dx\,dy}{|x-y|^d},\end{equation} 
where $\langle\cdot,\cdot\rangle$ denotes the duality between $W^{s,G_{:}}_0(\Omega)$ and its dual space $W^{-s,G_{:}^*}(\Omega)=[W^{s,G_{:}}_0(\Omega)]^*$, for the fractional generalised Orlicz space $W^{s,G_{:}}_0(\Omega)$ associated with the nonlinearity $g(x,y,|\cdot|)$, which we will define in Section \ref{sec:Musielak}, and $\delta^s$ is the two points finite difference $s$-quotient, with $0<s<1$, \[\delta^s u(x,y)=\frac{u(x)-u(y)}{|x-y|^s}.\]

We are mainly concerned with the corresponding obstacle problems
of the form
\begin{equation}\label{ObsPbGen}
u\in \mathbb{K}^s:\quad\langle\mathcal{L}_g^su-F,v-u\rangle\geq0\quad\forall v\in \mathbb{K}^s,
\end{equation} 
for $F\in W^{-s,G_{:}^*}(\Omega)$ and for the closed convex sets of one or two obstacles $\mathbb{K}^s=\mathbb{K}^s_1$, $\mathbb{K}^s_2$  defined, respectively, by 
\[\mathbb{K}_1^s=\{v\in W^{s,G_{:}}_0(\Omega):v\geq\psi \text{ a.e. in }\Omega\},\] 
\[\mathbb{K}_2^s=\{v\in W^{s,G_{:}}_0(\Omega):\psi\leq v\leq\varphi \text{ a.e. in }\Omega\},\]
with given functions $\psi,\varphi\in W^{s,G_{:}}(\mathbb{R}^d)$, supposing $\mathbb{K}^s_1\neq\emptyset$, for which it is sufficient to assume $\psi\leq0$ a.e. in $\mathbb{R}^d\backslash\Omega$, and $\mathbb{K}^s_2\neq\emptyset$, by assuming in addition that $\varphi\geq0$ a.e. in $\mathbb{R}^d\backslash\Omega$.

Here, $G_{:}(x,y,r)=\int_0^r g(x,y,\rho)\rho\,d\rho$ and $g(x,y,r):\mathbb{R}^d\times\mathbb{R}^d\times\mathbb{R}^+\to\mathbb{R}^+$ is a positive measurable function, Lipschitz continuous in $r$, such that, for almost every $x,y$, $\lim_{r\to0^+}rg(x,y,r)=0$, $\lim_{r\to +\infty}rg(x,y,r)=+\infty$  and 
\begin{equation}\label{gGrowthCond}0<g_*\leq \frac{r g'(x,y,r)}{g(x,y,r)}+1\leq g^*\quad\text{ for }r>0,\end{equation} 
for some constants $0<g_*\leq g^*$, as in \cite{AlbuquerqueAssisCarvalhoSalort2023-FracMusielakSobolevSpaces, AzroulBenkiraneShimiSrati2021-EmbeddingFractionalMusielakSobolev}.

Therefore $\mathcal{L}_g^s$ includes various nonlocal operators, as follows:
\begin{itemize}

    \item When $g(x,y,r)=g(r)$, we have the isotropic nonlinear nonlocal operator \begin{equation}\label{LgHomog}\int_{\mathbb{R}^d}\int_{\mathbb{R}^d}g\left(|\delta^s u(x,y)|\right)\delta^s u(x,y)\delta^s v(x,y)\frac{\,dx\,dy}{|x-y|^d},\end{equation}

    which corresponds to the fractional Orlicz-Sobolev case \cite{BonderSalort2019JFAFracOrliczSobolev} and, when $g=1$ is constant, includes the fractional Laplacian 
    \begin{equation}\label{La}\langle(-\Delta)^su, v\rangle = \int_{\mathbb{R}^d}\int_{\mathbb{R}^d}\frac{(u(x)-u(y))(v(x)-v(y))}{|x-y|^{d+2s}}\,dx\,dy;\end{equation}

    \item The anisotropic fractional $p$-Laplacian $\mathcal{L}_p^s$, for $1<p_*<p(x,y)<p^*<\infty$ (see for instance \cite{brasco2015stability,BahrouniJMAA2018ComparisonPrincipleFracPxLap}), corresponding to $g(x,y,r)=K(x,y)|r|^{p(x,y)-2}$  and defined through \begin{equation}\label{LpLap} \langle\mathcal{L}_p^su, v\rangle= \int_{\mathbb{R}^d}\int_{\mathbb{R}^d}\frac{|u(x)-u(y)|^{p(x,y)-2}(u(x)-u(y))(v(x)-v(y))}{|x-y|^{d+sp(x,y)}}K(x,y) \,dx\,dy,\end{equation} 
    where $K(x,y):\mathbb{R}^d\times\mathbb{R}^d\to\mathbb{R}$ is a measurable function satisfying 
\begin{equation}\label{KAssump}K(x,y)=K(y,x)\quad\text{ and }\quad k_*\leq K(x,y)\leq k^*\quad\text{ for a.e. }x,y\in\mathbb{R}^d\end{equation} for some $k_*,k^*>0$. In the linear case where $p=2$, we have the symmetric linear anisotropic fractional Laplacian (see, for instance, \cite{FracObsRiesz,RosOton2016Survey}); 

      \item The fractional double phase operator $\mathcal{L}_{p,q}^s$ corresponding to $g(x,y,r)=K_1(x,y)|r|^{p-2}+K_2(x,y)|r|^{q-2}$, or the logarithmic Zygmund operator with $g(x,y,r)=K_1(x,y)|r|^{p-2}+K_2(x,y)|r|^{p-2}|\log(|r|)|$, with $K_1,K_2$ satisfying \eqref{KAssump} (see, for instance, Example 2.3.2 of \cite{MusielakBook} for other $N$-functions); 
      
    \item We may also consider the special case of anisotropic operators of the type \eqref{Lgdef} with a strictly positive and bounded function $g(x,y,r)$ satisfying, in addition to \eqref{gGrowthCond},  \begin{equation}\label{tildegcond} 0<\gamma_*\leq g(x,y,r)\leq \gamma^*,\end{equation} for a.e. $x,y$ and for all $r$, which corresponds to the Hilbertian framework $H^s_0(\Omega)$ as in Chapter 5 of \cite{lo2022nonlocal}.

\end{itemize}

In recent years, there has been relevant progress in the study of PDEs in generalised Orlicz spaces including the obstacle problem (see, \cite{HarjulehtoHastoKlen2016GenOrliczPDE, MusielakBook, HarjulehtoKarppinen2024StabilityObstacleOrlicz} and their references), and also nonlocal operators in fractional generalised Orlicz spaces, also called fractional Musielak-Sobolev spaces, \cite{AzroulBenkiraneShimiSrati2022-NonlocalFractionalMusielakSobolev,AzroulBenkiraneShimiSrati2021-EmbeddingFractionalMusielakSobolev,AlbuquerqueAssisCarvalhoSalort2023-FracMusielakSobolevSpaces,ouali2024density}. The associated nonlocal elliptic equations in fractional generalised Orlicz spaces or the less general Orlicz-Sobolev spaces have also been extensively studied \cite{ByunKimOk2022MathAnnLocalHolderFracGLapAnisotropic,CarvalhoSilvaCarlosBahrouni2022BoundednessFracOrliczMoser,ChakerKimWeidner2022CVPDERegGLap,ChakerKimWeidner2022MathAnnHarnackFracAnisotropicGLap,BonderSalortVivas2021HolderEigenfunctionsGLap,BonderSalortVivas2020InteriorGlobalRegFracGLap,BonderSalort2019JFAFracOrliczSobolev,MolinaSalortVivas2021NonAFracGLapProperties}, including existence and regularity results, embedding and extension properties, local H\"older continuity, Harnack inequalities, and uniform boundedness properties. The associated unilateral problems have also been considered. Previous works along this line have only considered the fractional anisotropic $p$-Laplacian $\mathcal{L}_p^s$ in obstacle problems \cite{Palatucci2018NonAFracPLapObstacleHolder,K+Kuusi+Palatucci2016CVPDE-ObstaclePbFracPLap,Piccinini2022NonA-ObstaclePbFracPLap,Ok2023CV_HolderNonLocalVariable}. In this work, we consider the more general case of the anisotropic nonlocal nonlinear $g$-Laplacian $\mathcal{L}_g^s$ in generalised fractional Orlicz spaces, and we obtain new results for the associated obstacle problems.

This paper, extending the results of \cite{FracObsRiesz} to anisotropic nonlocal nonlinear operators, has the following plan:

\tableofcontents

In Section 2, after introducing the fractional generalised Orlicz functional framework for the operator $\mathcal{L}_g^s$, we recall some basic properties from the literature, as a Poincaré type inequality and some embedding results, in particular, in some fractional Sobolev-Gagliardo spaces. Then we state the existence of a unique variational solution to the homogeneous Dirichlet problem, which is a natural consequence of the assumptions on $g$ and the symmetry of the operator $\mathcal{L}_g^s$ and we prove a new global $L^\infty(\Omega)$ estimate, by using the truncation method used in \cite{PeralBoundedSolnEstimates} for the anisotropic fractional Laplacian. This global $L^\infty(\Omega)$ bound was obtained previously in the isotropic case of $g(x,y,r)=g(r)$ with $G$ satisfying the $\Delta'$ condition (which is stronger than the $\Delta_2$ condition) in Corollary 1.7 of \cite{CarvalhoSilvaCarlosBahrouni2022BoundednessFracOrliczMoser}, as well as Theorem 3 of \cite{BonderSalortVivas2021HolderEigenfunctionsGLap}, where these authors considered a different class of $G$, namely $G$ is such that $\bar{g}$ is convex and $g_*\geq1$ in \eqref{gGrowthCond}. We also collect some known regularity results with the aim to extend them to the solutions of the one and the two obstacles problems.

In Section 3, we first show that the structural assumption \eqref{gGrowthCond} implies that the $\mathcal{L}_g^s$ is a strictly T-monotone operator in $W^{s,G_{:}}_0(\Omega)$. This fact easily implies the monotonicity of the solution of the Dirichlet problem with respect to the data, extending and unifying previous results already known in some particular cases of $g$. This important property has interesting consequences in unilateral problems of obstacle type also in this generalised fractional framework: comparison of solution with respect to the data and a continuous dependence of the solutions in $L^\infty$ with respect to the $L^\infty$ variation of the obstacles; and more important, it also implies the Lewy-Stampacchia inequalities to this more general nonlocal framework, extending \cite{ServadeiValdinoci2013RMILewyStampacchia} and \cite{GMosconi} in the one obstacle case and are new in the nonlocal two obstacles problem. 

In the case when the heterogeneous term $f$ is in a suitable generalised Orlicz space, in Section 4, we give a direct proof of the Lewy-Stampacchia inequalities showing then that $\mathcal{L}_g^s u$ is also in the same Orlicz space. We also prove important consequences to the regularity of the solutions; and, in the case of integrable data, the approximation of the solutions via bounded penalisation. 

Finally, in Section 5, exploring the natural relation of the obstacle problem and potential theory, we make some elementary remarks on the extension of capacity to the fractional generalised Orlicz framework associated with the operator $\mathcal{L}_g^s$, motivating interesting open questions that are beyond the scope of this work. We refer to the recent work \cite{OrliczCapacities}, and its references, for the extension of the Sobolev capacity to generalised Orlicz spaces in the local framework of the gradient. We conclude this paper in the Hilbertian case of the anisotropic nonlinear operator \eqref{La}, with a few extensions relating the obstacle problem and potential theory, in the line of the pioneering work of Stampacchia \cite{StampacchiaEllipticEqsLmReg} for bilinear coercive forms, which was followed, for instance, in \cite{AdamsCapacityObstacle} and, in the nonlinear classical framework in \cite{AttouchPic_NPT_AFSToul_1979} and extended to the linear nonlocal setting in \cite{FracObsRiesz}.

Although we have considered only the nonlocal nonlinear anisotropic operators of the $g$-Laplacian type defined in the whole $\mathbb{R}^d$ by \eqref{Lgdef}, most of our results still hold in the different case in which the definition of the $g$-Laplacian type operator where the integral is instead taken only over the domain $\Omega$ as in \cite{KaufmannRossiVidal2017FracPxLap} and \cite{DelpezzoRossi2017TracesFracSobolevSpacesVariableExponents}.

\section{Preliminaries}

In this section we collect some known but dispersed facts, which can be found in the books \cite{HarjulehtoHastoLectures,LangMendezMonograph,KrasnoselskiiRutickii1961ConvexFunctionsOrliczSpaces,MusielakBook}, needed to develop our main results. After setting the functional framework of the fractional generalised Orlicz spaces we compile some relevant results on the fractional nonlinear Dirichlet problem in different cases.

\subsection{The Fractional Generalised Orlicz Functional Framework}\label{sec:Musielak}

Let the mapping $\bar{g}:\mathbb{R}^d\times\mathbb{R}^d\times\mathbb{R}^+\to\mathbb{R}$ be defined by 
\[\bar{g}(x,y,r)=g(x,y,r)r.\]
Then, with $g$ defined in the introduction, $\bar{g}$ satisfies the following condition:
\begin{enumerate}[label=(\arabic*)]
    \item $\bar{g}(x,y,\cdot):\mathbb{R}^d\times\mathbb{R}^d\times\mathbb{R}^+\to\mathbb{R}$ is a strictly increasing homeomorphism from $\mathbb{R}^+$ onto $\mathbb{R}$, $\bar{g}(x,y,r)>0$ when $r>0$.
\end{enumerate}
Moreover, its primitive $G_{:}=G(x,y,r):\mathbb{R}^d\times\mathbb{R}^d\times\mathbb{R}^+\to\mathbb{R}^+$ defined for all $r\geq0$ by \[G(x,y,r)=\int_0^r\bar{g}(x,y,\rho)\,d\rho\] satisfies:
\begin{enumerate}[resume,label=(\arabic*)]
    \item $G(x,y,\cdot):[0,\infty[\to\mathbb{R}$ is an increasing function, $G(x,y,0)=0$ and $G(x,y,r)>0$ whenever $r>0$;
    \item For the same constants $g_*<g^*$ as in \eqref{gGrowthCond}, \begin{equation}\label{GGrowthCond}0<1+g_*\leq \frac{r\bar{g}(x,y,r)}{G(x,y,r)}\leq g^*+1,\quad\text{ a.e. }x,y\in\mathbb{R}^d,\quad r\geq0,\end{equation} so $G_{:}$ satisfies the $\Delta_2$-condition, i.e. $G_{:}(2t)\leq CG_{:}(t)$ for $t>0$ and a.e. $x,y$, with a fixed $C>0$. (see \cite{Adams} or \cite{KrasnoselskiiRutickii1961ConvexFunctionsOrliczSpaces}.)
\end{enumerate}
The assumption \eqref{gGrowthCond} means that $G_{:}$ is a strictly convex function for a.e. $x,y$, and we denote $G^*_{:}=G^*(x,y,r):\mathbb{R}^d\times\mathbb{R}^d\times\mathbb{R}^+\to\mathbb{R}^+$ as the conjugate convex function of $G_{:}$, which is defined by  \[G^*(x,y,r)=\sup_{\rho>0}\{r\rho-G(x,y,\rho)\},\quad\forall x,y\in\mathbb{R}^d,r\geq0.\] 

In the example  $G(x,y,r)=\frac{1}{p(x,y)}|r|^{p(x,y)}$ corresponding to the anisotropic fractional $p$-Laplacian \eqref{LpLap}, we have $G^*(x,y,r)=\frac{1}{p'(x,y)}|r|^{p'(x,y)}$ with $\frac{1}{p(x,y)}+\frac{1}{p'(x,y)}=1$, for each $x,y\in\mathbb{R}^d$.

Given the function $G_{:}$, we can subsequently define the modulars $\Gamma_{\hat{G}_\cdot}$ and $\Gamma_{s,G}$ for $0<s<1$ and $u$ extended by 0 outside $\Omega$, following \cite{BonderSalort2019JFAFracOrliczSobolev}, by 
\[\Gamma_{\hat{G}_\cdot}(u)=\int_{\mathbb{R}^d}{\hat{G}_\cdot}(|u(x)|)\,dx,\]
\[\Gamma_{s,G_{:}}(u)=\int_{\mathbb{R}^d}\int_{\mathbb{R}^d}G_{:}\left(|\delta_su|\right)\frac{dx\,dy}{|x-y|^d}\quad\quad\text{ with }0<s<1,\]
where we denote \[\hat{G}_\cdot(r)=G(x,x,r),\] which also satisfies the global $\Delta_2$-condition.

Suppose we define the corresponding generalised Orlicz spaces and generalised fractional Orlicz-Sobolev spaces 
\[L^{\hat{G}_\cdot}(\mathbb{R}^d)=\left\{u:\mathbb{R}^d\to\mathbb{R},\text{measurable}:\Gamma_{\hat{G}_\cdot}(u)<\infty\right\},\] 
\[W^{s,G_{:}}(\mathbb{R}^d)=\left\{u\in L^{\hat{G}_\cdot}(\mathbb{R}^d):\Gamma_{s,G_{:}}(u)<\infty\right\}\] 
with their corresponding Luxemburg norms (see, for instance, Chapter 8 of \cite{Adams} or Chapter 2 of \cite{Musielak1983BookOrlicz}), given by 
\[\norm{u}_G=\norm{u}_{L^{\hat{G}_\cdot}(\mathbb{R}^d)}=\inf\left\{\lambda>0:\Gamma_{\hat{G}_\cdot}\left(\frac{u}{\lambda}\right)\leq1\right\}\] and \[\norm{u}_{s,G}=\norm{u}_{W^{s,G_{:}}(\mathbb{R}^d)}=\norm{u}_G+[u]_{s,G},\] where 
\[[u]_{s,G}=\inf\left\{\lambda>0:\Gamma_{s,G_{:}}\left(\frac{u}{\lambda}\right)\leq1\right\}.\] 
$L^{\hat{G}_\cdot}(\mathbb{R}^d)$ and $W^{s,G_{:}}(\mathbb{R}^d)$ are reflexive Banach spaces by the $\Delta_2$-condition (refer to Theorem 11.6 of \cite{Musielak1983BookOrlicz}). 

As in Lemmas 3.1 and 3.3 of \cite{AzroulBenkiraneShimiSrati2022-NonlocalFractionalMusielakSobolev}), the strictly convex functional  $\Gamma_{s,G_{:}}\in C^1(W^{s,G_{:}}(\mathbb{R}^d),\mathbb{R})$ and is weakly lower semi-continuous.

We define 
\[W^{s,G_{:}}_0(\Omega)= \overline{C_c^\infty(\Omega)}^{\norm{\cdot}_{s,G}}\]
with dual $[W^{s,G_{:}}_0(\Omega)]^*=W^{-s,G_{:}^*}(\Omega)$, as $G_{:}$ satisfies the $\Delta_2$-condition (see Sections 3.3 and 3.5 of \cite{MusielakBook}), and we consider a function $v\in W^{s,G_{:}}_0(\Omega)$ defined everywhere in $\mathbb{R}^d$ by setting $v=0$ in $\mathbb{R}^d \backslash \Omega$. Furthermore, by Lemma 2.5.5 of \cite{LangMendezMonograph}, $C_c^\infty(\Omega)$ is dense in $C(\Omega)\cap L^{\hat{G}_\cdot}(\Omega)$. 

We denote by $\hat{G}^{-1}_\cdot(r)=G^{-1}(x,x,r)$ the inverse function of $\hat{G}_\cdot$ for almost all $x$, which satisfies the following conditions:
\begin{equation}
    \int_0^1\frac{\hat{G}^{-1}_\cdot(t)}{t^{(d+s)/d}}\,dt<\infty, \quad \int_1^\infty\frac{\hat{G}^{-1}_\cdot(t)}{t^{(d+s)/d}}\,dt=\infty \quad \text{ for almost all }x\in\Omega.
\end{equation}
Then, the inverse generalised Orlicz conjugate function of $\hat{G}_\cdot$ is defined as
\begin{equation}
    (\tilde{G}_\cdot)^{-1}(r)=\int^r_0\frac{\hat{G}^{-1}_\cdot(t)}{t^{(d+s)/d}}\,dt \quad \text{ for almost all }x\in\Omega.
\end{equation} 
Then, by Theorem 2.1 of \cite{AzroulBenkiraneShimiSrati2021-EmbeddingFractionalMusielakSobolev}, the embeddings $W^{s,G_{:}}_0(\Omega)\hookrightarrow L^{\tilde{G}_\cdot}(\Omega)$ and $[L^{\tilde{G}_\cdot}(\Omega)]^*\hookrightarrow W^{-s,G_{:}^*}(\Omega)$ hold for the bounded open subset $\Omega\subset\mathbb{R}^d$ with Lipschitz boundary. 
For any $F\in W^{-s,G_{:}^*}(\Omega)$ and $u\in W^{s,G_{:}}_0(\Omega)$, we denote their inner product by $\langle\cdot,\cdot\rangle$. As $\tilde{G}_\cdot$ also satisfies the 
$\Delta_2$-condition, we have $[L^{\tilde{G}_\cdot}(\Omega)]^*=L^{\hat{G}^*_\cdot}(\Omega)$ and so when $F=f\in L^{\hat{G}^*_\cdot}(\Omega)$, then 
\begin{equation}\label{IntegralForm}\langle f,u\rangle = \int_{\Omega} fu\,dx \quad\forall u\in L^{\tilde{G}_\cdot}(\Omega).\end{equation} 
Furthermore, we have a Poincaré type inequality:

\begin{lemma}[Corollary of Theorem 2.3 of \cite{AzroulBenkiraneShimiSrati2021-EmbeddingFractionalMusielakSobolev}]\label{Poincare}
    Let $s\in]0,1[$ and $\Omega$ be a bounded open subset of $\mathbb{R}^d$ with a Lipschitz bounded boundary. Then there exists a constant $C=C(s,d,\Omega)>0$ such that \[\norm{u}_{L^{\hat{G}_\cdot}(\Omega)}\leq C[u]_{s,G_{:}}\] for all $u\in W^{s,G_{:}}_0(\Omega)$. Therefore, the embedding \begin{equation}W^{s,G_{:}}_0(\Omega)\hookrightarrow L^{\hat{G}_\cdot}(\Omega)\end{equation} is continuous.
    Furthermore, $[u]_{s,G}$ is an equivalent norm to $\norm{u}_{s,G}$ for the fractional generalised Orlicz space $W^{s,G_{:}}_0(\Omega)$.
\end{lemma}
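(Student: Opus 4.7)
The plan is to establish the Poincaré-type estimate $\|u\|_{L^{\hat{G}_\cdot}(\Omega)} \leq C [u]_{s,G_{:}}$ directly, since both the continuous embedding and the equivalence of the norms are immediate corollaries: the former follows from $[u]_{s,G} \leq \|u\|_{s,G}$, and the latter from $\|u\|_{s,G} = \|u\|_G + [u]_{s,G} \leq (C+1)[u]_{s,G}$. The key observation is that any $u \in W^{s,G_{:}}_0(\Omega)$ vanishes on $\mathbb{R}^d \setminus \Omega$, so the double integral defining $[u]_{s,G}$ contains a useful contribution from the region $\{x \in \Omega,\, y \notin \Omega\}$.

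First, I would fix a vector $z_0 \in \mathbb{R}^d$ with $|z_0| \geq 2\,\mathrm{diam}(\Omega)$ and set $\Omega_0 := \Omega + z_0$, which is disjoint from $\Omega$ and satisfies $\sup\{|x-y| : x \in \Omega,\, y \in \Omega_0\} \leq R$ for some $R = R(\Omega)$. Since $u(y) = 0$ on $\Omega_0$, one has $|\delta^s u(x,y)| = |u(x)|/|x-y|^s \geq |u(x)|/R^s$ for $(x,y) \in \Omega \times \Omega_0$. Restricting the modular integral to this set and using the monotonicity of $G_{:}$ in $r$,
\begin{equation*}
  \Gamma_{s,G_{:}}(u) \;\geq\; \int_\Omega \int_{\Omega_0} G\Big(x, y, \tfrac{|u(x)|}{R^s}\Big) \frac{dx\,dy}{|x-y|^d} \;\geq\; c_1 \int_\Omega \hat{G}_\cdot\big(c_2\, |u(x)|\big)\, dx,
\end{equation*}
where the last inequality rests on a uniform comparison between the off-diagonal $G(x,y,\cdot)$ and the diagonal $\hat{G}_\cdot(\cdot) = G(x,x,\cdot)$, combined with the $\Delta_2$ condition \eqref{GGrowthCond} to absorb the factor $R^{-s}$ at the cost of a constant $c_2$.

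Next, applying this modular estimate to $u/\lambda$ and invoking the definitions of the Luxemburg norms converts it into the Poincaré estimate $\|u\|_{L^{\hat{G}_\cdot}(\Omega)} \leq C[u]_{s,G_{:}}$, with $C = C(s,d,\Omega)$; the remaining two assertions then follow as noted above.

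The principal obstacle is the off-diagonal-to-diagonal comparison in the last inequality, since $G(x,y,r)$ depends on $y$ as well as on $x$ and one must control $G(x,y,\cdot)$ by $\hat{G}_\cdot(\cdot)$ uniformly over $(x,y) \in \Omega \times \Omega_0$. This compatibility is precisely what Theorem 2.3 of \cite{AzroulBenkiraneShimiSrati2021-EmbeddingFractionalMusielakSobolev} supplies under the structural assumptions \eqref{gGrowthCond} on $g$, which is why the statement is framed as a corollary of that result rather than as an independent theorem.
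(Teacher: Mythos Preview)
The paper itself gives no proof of this lemma; it is stated purely as a corollary of Theorem~2.3 in \cite{AzroulBenkiraneShimiSrati2021-EmbeddingFractionalMusielakSobolev}, so there is nothing to compare your argument against except the external reference.

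Your translated-domain strategy is the standard route to fractional Poincar\'e inequalities, and the reduction to the two consequences (continuous embedding, equivalence of norms) is correct. However, the step you flag as ``the principal obstacle'' is a genuine gap, and your proposed resolution misrepresents what the cited theorem provides. The assumption \eqref{gGrowthCond} controls only the growth of $r\mapsto G(x,y,r)$ \emph{uniformly} in $(x,y)$; it says nothing about comparing $G(x,y,r)$ with $G(x,x,r)$ for $y\neq x$. A function such as $G(x,y,r)=a(x,y)\,r^{p}$ with $a$ merely positive and measurable satisfies \eqref{gGrowthCond} with $g_*=g^*=p-1$, yet $G(x,y,r)$ and $\hat G_\cdot(r)=a(x,x)r^p$ need not be comparable at all for $y\in\Omega_0$. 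So the inequality
\[
\int_\Omega\int_{\Omega_0} G\Big(x,y,\tfrac{|u(x)|}{R^s}\Big)\frac{dx\,dy}{|x-y|^d}\;\ge\; c_1\int_\Omega \hat G_\cdot\big(c_2|u(x)|\big)\,dx
\]
does not follow from the hypotheses listed in the paper.

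Moreover, Theorem~2.3 of \cite{AzroulBenkiraneShimiSrati2021-EmbeddingFractionalMusielakSobolev} is not a pointwise off-diagonal/diagonal comparison lemma; in that reference it is an embedding (compactness) result for $W^{s,G_{xy}}(\Omega)$ into $L^{\hat G_x}(\Omega)$, proved under additional structural hypotheses on $G$ in the $(x,y)$-variables beyond \eqref{gGrowthCond}. The present paper is invoking that embedding wholesale and then noting (in the remark following the lemma) that $W^{s,G_{:}}_0(\Omega)\hookrightarrow W^{s,G_{xy}}_0(\Omega)$, so the conclusion transfers. In other words, the lemma is being imported, not re-proved, and your attempt to supply an independent argument stalls at exactly the point where one must appeal to the external result anyway.
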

\begin{remark}
    Note that in the bounded open sets $\Omega$, the spaces we consider here are different from the $W^{s,G_{xy}}(\Omega)$ spaces considered in \cite{AzroulBenkiraneShimiSrati2022-NonlocalFractionalMusielakSobolev,AzroulBenkiraneShimiSrati2021-EmbeddingFractionalMusielakSobolev,AlbuquerqueAssisCarvalhoSalort2023-FracMusielakSobolevSpaces}, defined by
    \[W^{s,G_{xy}}(\Omega)=\left\{u\in L^{\hat{G}_x}(\Omega):\Phi_{s,G_{xy}}(u)<\infty\right\}\] where,  for $0<s<1$, 
    \begin{equation*}\Phi_{s,G_{xy}}(u)=\int_\Omega\int_\Omega G_{xy}\left(|\delta_su|\right)\frac{dx\,dy}{|x-y|^d}\end{equation*}
    with $G_{xy}:\Omega\times\Omega\times\mathbb{R}^+\to\mathbb{R}^+$ is defined only for a.e. $(x,y)\in \Omega\times\Omega$ with similar properties to our $G_{:}:\mathbb{R}^d\times\mathbb{R}^d\times\mathbb{R}^+\to\mathbb{R}^+$. We noticed that by Remark 2.2 of \cite{AzroulBenkiraneShimiSrati2022-NonlocalFractionalMusielakSobolev}] it is known $C_c^\infty(\Omega)\subset C_c^2(\Omega)\subset W^{s,G_{xy}}(\Omega)$. 
    
    Since the spaces we consider are, in a certain sense, smaller than the $W^{s,G_{xy}}(\Omega)$ spaces, as $W^{s,G_{:}}_0(\Omega)\hookrightarrow W^{s,G_{xy}}_0(\Omega)$ the embedding results in \cite{AzroulBenkiraneShimiSrati2022-NonlocalFractionalMusielakSobolev,AzroulBenkiraneShimiSrati2021-EmbeddingFractionalMusielakSobolev,AlbuquerqueAssisCarvalhoSalort2023-FracMusielakSobolevSpaces} still hold,  as Lemma \ref{Poincare} above.

    Observe that the space $L^{\tilde{G}_x}(\Omega)$ defined with 
    \[\Phi_{\hat{G}_x}(u)=\int_{\Omega}{\hat{G}_x}(|u(x)|)\,dx\] for $\hat{G}_x(x)=G_{xy}(x,x)$  is the same as $L^{\tilde{G}_\cdot}(\Omega)$.
\end{remark}

\begin{remark}
    In the case  $\Omega=\mathbb{R}^d$, $W^{s,G_{:}}(\mathbb{R}^d)$ and $W^{s,G_{xy}}(\mathbb{R}^d)$ coincide.
\end{remark}

For completeness, we also register the following properties.

\begin{lemma}\label{EmbeddingLemma}$ $
    \begin{itemize}
        \item \emph{[Theorem 3.3 of \cite{ouali2024density}].} $C_c^\infty(\mathbb{R}^d)$ is dense in $W^{s,G_{:}}(\mathbb{R}^d)$, so $W^{s,G_{:}}(\mathbb{R}^d)=W^{s,G_{:}}_0(\mathbb{R}^d)$.
        \item \emph{[Proposition 2.1 of \cite{AzroulBenkiraneShimiSrati2021-EmbeddingFractionalMusielakSobolev}].} For a bounded open subset $\Omega\subset\mathbb{R}^d$ and $0<s_1\leq s\leq s_2<1$, the embeddings 
        \[W^{s_2,G_{:}}_0(\Omega)\hookrightarrow W^{s,G_{:}}_0(\Omega)\hookrightarrow W^{s_1,G_{:}}_0(\Omega)\] are continuous. 
    \end{itemize}
\end{lemma}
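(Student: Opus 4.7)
The plan splits naturally along the two bullet points, which are independent.

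For the density statement, I would use the classical truncate-then-mollify scheme, adapted to the Musielak-Orlicz setting. First, given $u \in W^{s,G_{:}}(\mathbb{R}^d)$, multiply by a smooth radial cutoff $\eta_R \in C_c^\infty(\mathbb{R}^d)$ with $\eta_R \equiv 1$ on $B_R$, $\eta_R \equiv 0$ off $B_{2R}$, $|\nabla \eta_R|\leq C/R$, and use the identity
\[
\delta^s(u - u\eta_R)(x,y) = (1-\eta_R(x))\,\delta^s u(x,y) - u(y)\,\delta^s \eta_R(x,y),
\]
together with convexity of $G_{:}$, the $\Delta_2$-condition, and dominated convergence, to show $\Gamma_{s,G_{:}}(u - u\eta_R)\to 0$ as $R\to\infty$. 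Second, for compactly supported $u$, convolve with a standard mollifier $\rho_\varepsilon$; the identity $\delta^s(u\ast\rho_\varepsilon)(x,y) = (\delta^s u)\ast\rho_\varepsilon(x,y)$ (convolution in the $x$-variable), continuity of translations in $L^{\hat G_\cdot}$ (again a consequence of $\Delta_2$), and a second application of dominated convergence on the Gagliardo modular yield $u\ast \rho_\varepsilon \to u$ in $W^{s,G_{:}}(\mathbb{R}^d)$.

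For the embedding chain, I would use the algebraic identity $|\delta^{s_1} u(x,y)| = |x-y|^{s_2 - s_1}\,|\delta^{s_2} u(x,y)|$ and split the Gagliardo integral for $\Gamma_{s_1,G_{:}}(u)$ at $|x-y| = D := \operatorname{diam}\Omega$. On $\{|x-y|\leq D\}$, the prefactor is bounded by $\max(1, D^{s_2-s_1})$, so monotonicity of $G_{:}$ in the third slot combined with the $\Delta_2$-condition yields $G_{:}(|\delta^{s_1} u|) \leq C\, G_{:}(|\delta^{s_2} u|)$, and this piece is absorbed into $\Gamma_{s_2,G_{:}}(u)$. On $\{|x-y| > D\}$, at most one of $x,y$ lies in $\Omega$ (since $u$ vanishes outside $\Omega$), so by symmetry the integrand reduces to $G_{:}(|u(x)|/|x-y|^{s_1})$. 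Passing to spherical coordinates and substituting $t = |u(x)|/r^{s_1}$, together with the bound
\[
\int_0^b \frac{G_{:}(t)}{t}\,dt \leq \frac{G_{:}(b)}{1+g_*},
\]
obtained by integrating $\partial_r G_{:}(r) \geq (1+g_*)G_{:}(r)/r$ (the lower inequality in \eqref{GGrowthCond}), controls this tail by $C\int_\Omega \hat G_\cdot(|u|)\,dx$, and Lemma \ref{Poincare} absorbs this into $[u]_{s_2,G}$. The intermediate embedding $W^{s_2,G_{:}}_0(\Omega)\hookrightarrow W^{s,G_{:}}_0(\Omega)$ follows by the same argument with $s$ in place of $s_1$, and composing yields the full chain.

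The main obstacle is the tail $\{|x-y| > D\}$ for the embedding: the kernel $|x-y|^{-d}$ is only borderline integrable at infinity, so the effective decay must come from $G_{:}$ applied to the small argument $|u(x)|/r^{s_1}$. This is precisely where the lower growth index $g_*>0$ from \eqref{gGrowthCond} (equivalently, the $\Delta_2$-condition on the conjugate $G_{:}^*$) is indispensable — without it the displayed $\int_0^b G_{:}(t)/t\,dt$ estimate fails and the tail cannot be closed against $\int_\Omega \hat G_\cdot(|u|)\,dx$.
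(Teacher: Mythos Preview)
The paper does not prove this lemma; it simply records two results from the literature (Theorem~3.3 of \cite{ouali2024density} and Proposition~2.1 of \cite{AzroulBenkiraneShimiSrati2021-EmbeddingFractionalMusielakSobolev}) without argument. Your sketch therefore goes well beyond what the paper itself supplies. The truncate--then--mollify outline for the density statement is the standard route and is essentially what the cited reference does; that part is fine.

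For the embedding chain, your near-diagonal estimate on $\{|x-y|\leq D\}$ is correct, but the tail argument has a genuine gap coming from the anisotropy of $G_{:}=G(x,y,r)$. When you ``pass to spherical coordinates and substitute $t=|u(x)|/r^{s_1}$'', the point $y=x+r\omega$ becomes a function of $t$, so the resulting integral is
\[
\frac{1}{s_1}\int_0^{|u(x)|/D^{s_1}} G\bigl(x,\,x+(|u(x)|/t)^{1/s_1}\omega,\,t\bigr)\,\frac{dt}{t},
\]
which is \emph{not} of the form $\int_0^b G(x,y,t)/t\,dt$ for fixed $(x,y)$; hence your pointwise bound $\int_0^b G_{:}(t)/t\,dt\leq G_{:}(b)/(1+g_*)$ does not apply. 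Even if it did, the outcome would involve $G(x,y,|u(x)|/D^{s_1})$ for $y$ ranging over an unbounded set, and you give no mechanism to pass from this to $\hat G_\cdot(|u(x)|)=G(x,x,|u(x)|)$. The paper's standing hypotheses on $g$ contain no uniformity of the type $\sup_{x,y}G(x,y,1)<\infty$ or $G(x,y,r)\leq C\,G(x,x,r)$ that would close this step.

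It is worth noting that the cited source \cite{AzroulBenkiraneShimiSrati2021-EmbeddingFractionalMusielakSobolev} works with the modular integrated over $\Omega\times\Omega$, so in that setting $|x-y|\leq D$ always holds and the tail region is empty; your near-diagonal argument alone then suffices. The transfer of their result to the $\mathbb{R}^d\times\mathbb{R}^d$-based spaces $W^{s,G_{:}}_0(\Omega)$ used here (cf.\ the Remark following Lemma~\ref{Poincare}) is precisely where the difficulty you are encountering lives, and it is not resolved by the substitution trick as written.
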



Furthermore, for bounded domains $\Omega\subset\mathbb{R}^d$, 
\begin{equation}\label{LGLpApproxEmbedOmega}L^{g^*+1}(\Omega)\subset L^{\hat{G}_\cdot}(\Omega)\subset L^{g_*+1}(\Omega),\end{equation}
which is also a consequence of Theorem 8.12 (b) of \cite{Adams} and the inequality
\begin{multline*}\log(r^{1+g_*})-\log(r_0^{1+g_*})=\int_{r_0}^r\frac{1+g_*}{r}\,dr\leq\int_{r_0}^r \frac{\bar{g}(x,y,r)}{G(x,y,r)}\,dr\\=\log(G(x,y,r))-\log(G(x,y,r_0))\leq \log(r^{1+g^*})-\log(r_0^{1+g^*})\end{multline*} 
that holds for every $0<r_0<r$, by assumption \eqref{GGrowthCond}. In fact, this means $G(x,y,r)$ dominates $r^{g_* +1}$ and is dominated by $r^{g^* +1}$ as $r\to \infty$ and the embeddings \eqref{LGLpApproxEmbedOmega} follow.

We recall the definition of the fractional Sobolev-Gagliardo spaces $W^{s,p}_0(\Omega)$ as the closure of  $C_c^\infty(\Omega)$ in
\[W^{s,p}(\Omega)=\left\{u\in L^p(\Omega) :  [u]_{s,p,\Omega}^p=\int_{\Omega} \int_{\Omega} \frac{| u(x)- u(y)|^{p}}{|x-y|^{sp}}\frac{ dx\, dy}{|x-y|^d}< \infty  \right\}.\]

Then, we have
\begin{proposition}[Lemma 2.3 of \cite{AzroulBenkiraneShimiSrati2021-EmbeddingFractionalMusielakSobolev}]\label{WGWspApproxEmbedProp}
    For any $0<s<1$ and $\Omega\subset\mathbb{R}^d$ open bounded subset, 
    \begin{equation}\label{WGWspApproxEmbed} W^{s,G_{:}}_0(\Omega)\hookrightarrow  W^{t,q}_0(\Omega)\quad \text{ for any }0<t<s,1\leq q<1+g_*.\end{equation}
\end{proposition}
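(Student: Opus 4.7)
The plan is to bound both the $L^q(\Omega)$ norm and the Gagliardo seminorm $[u]_{t,q,\Omega}$ of any $u\in W^{s,G_{:}}_0(\Omega)$ by the Luxemburg seminorm $[u]_{s,G}$, and then invoke the norm equivalence in Lemma~\ref{Poincare}. For the $L^q$ part I would simply chain the Poincar\'e embedding $W^{s,G_{:}}_0(\Omega)\hookrightarrow L^{\hat G_\cdot}(\Omega)$ of Lemma~\ref{Poincare} with the inclusion $L^{\hat G_\cdot}(\Omega)\subset L^{g_*+1}(\Omega)$ from \eqref{LGLpApproxEmbedOmega}, followed by the trivial embedding $L^{g_*+1}(\Omega)\hookrightarrow L^q(\Omega)$ on the bounded domain $\Omega$, which is valid because $q<1+g_*$.

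The seminorm estimate is the core of the argument. I would first rewrite
\[
[u]_{t,q,\Omega}^q=\int_\Omega\int_\Omega |\delta_s u(x,y)|^q\,|x-y|^{(s-t)q}\,\frac{dx\,dy}{|x-y|^d},
\]
and absorb the factor $|x-y|^{(s-t)q}\le\mathrm{diam}(\Omega)^{(s-t)q}$, which is bounded since $(s-t)q>0$. Then I would split $\Omega\times\Omega$ into $A=\{|\delta_s u|\le 1\}$ and $B=\{|\delta_s u|>1\}$. On $B$, since $q<1+g_*$ one has $|\delta_s u|^q\le|\delta_s u|^{1+g_*}$; integrating the lower bound in \eqref{GGrowthCond} between $1$ and $r\ge 1$ gives $G(x,y,r)\ge G(x,y,1)\,r^{1+g_*}$, so provided $G(x,y,1)$ is uniformly bounded below in $(x,y)$ (as is standard in this Musielak--Orlicz setting), one gets $|\delta_s u|^{1+g_*}\le C\,G(x,y,|\delta_s u|)$, and the $B$-contribution is controlled by a constant multiple of the modular $\Gamma_{s,G_{:}}(u)$. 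On $A$, the trivial estimate $|\delta_s u|^q\le 1$ reduces the integral to the finite quantity $\int_\Omega\int_\Omega|x-y|^{(s-t)q-d}\,dx\,dy$, which converges because $(s-t)q>0$ and $\Omega$ is bounded. Combining the two contributions yields a modular bound
\[
[u]_{t,q,\Omega}^q\le C_1+C_2\,\Gamma_{s,G_{:}}(u).
\]

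To convert this modular inequality into a continuous embedding I would rescale: applying it to $u/\lambda$ with $\lambda=[u]_{s,G}$ and using that the Luxemburg definition together with the $\Delta_2$-condition gives $\Gamma_{s,G_{:}}(u/\lambda)\le 1$, one obtains $[u/\lambda]_{t,q,\Omega}^q\le C_1+C_2$ and thus $[u]_{t,q,\Omega}\le(C_1+C_2)^{1/q}\,[u]_{s,G}$. Combined with the $L^q$ bound and Lemma~\ref{Poincare}, this delivers the stated continuous embedding into $W^{t,q}_0(\Omega)$. The step I expect to be most delicate is precisely the pointwise comparison $|\delta_s u|^{1+g_*}\le C\,G(x,y,|\delta_s u|)$ on $B$, because it hinges on a uniform lower bound for $G(\cdot,\cdot,1)$; as a fallback one could instead apply H\"older's inequality with exponents $(1+g_*)/q$ and its conjugate directly to $\int\int|\delta_s u|^q|x-y|^{(s-t)q}|x-y|^{-d}\,dx\,dy$, but the same uniformity question resurfaces when relating the resulting $L^{1+g_*}$-type modular to $\Gamma_{s,G_{:}}(u)$.
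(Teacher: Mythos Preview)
The paper does not supply its own proof of this proposition; it is quoted verbatim as Lemma~2.3 of \cite{AzroulBenkiraneShimiSrati2021-EmbeddingFractionalMusielakSobolev} and left without argument. Your outline is therefore not competing with a proof in the paper but filling a gap, and the strategy you describe---controlling the $L^q$-part through Lemma~\ref{Poincare} and \eqref{LGLpApproxEmbedOmega}, and the Gagliardo seminorm by rewriting $[u]_{t,q,\Omega}^q=\int_\Omega\int_\Omega|\delta^s u|^q|x-y|^{(s-t)q}\frac{dx\,dy}{|x-y|^d}$, splitting into $\{|\delta^s u|\le 1\}$ and $\{|\delta^s u|>1\}$, and passing from a modular bound to a norm bound by the unit-ball rescaling---is exactly the standard route used in the cited reference and in the Musielak--Orlicz literature more broadly.

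You have correctly identified the only genuinely delicate point: on the set $\{|\delta^s u|>1\}$ the comparison $r^{1+g_*}\le C\,G(x,y,r)$, obtained by integrating \eqref{GGrowthCond} from $1$ to $r$, requires $\inf_{x,y}G(x,y,1)>0$. This normalisation (often labelled (A0) in the generalised Orlicz setting) is not stated explicitly in the present paper but is a standing hypothesis in \cite{AzroulBenkiraneShimiSrati2021-EmbeddingFractionalMusielakSobolev} and in all the examples listed after \eqref{gGrowthCond}, so it is legitimate to invoke it here. With that caveat made explicit, your argument is complete; the H\"older-inequality fallback you mention would not sidestep the issue, since the same uniformity is needed to compare $\int\int|\delta^s u|^{1+g_*}|x-y|^{-d}\,dx\,dy$ with $\Gamma_{s,G_{:}}(u)$.
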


In addition, we can combine the embedding \eqref{WGWspApproxEmbed} and the classical Rellich-Kondrachov compactness embedding we have $W^{t,q}_0(\Omega)\subset L^{q^*}(\Omega)$ with $q^*$ satisfying $1\leq q^* < \frac{dq}{d-tq} < \frac{d(g_*+1)}{d-s(g_*+1)}$. Observe that it is necessary that $s(g_*+1)<d$. This embedding result is given as follows:

\begin{corollary}\label{CompactnessTheoremRellichLp}
$W^{s,G_{:}}_0(\Omega)\Subset L^q(\Omega)$ with $q$ satisfying $1\leq q < \frac{d(g_*+1)}{d-s(g_*+1)}$. 
\end{corollary}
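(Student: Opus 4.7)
The plan is to compose the continuous embedding of Proposition \ref{WGWspApproxEmbedProp} with the classical fractional Rellich--Kondrachov compactness theorem, choosing the auxiliary parameters $(t,q')$ close enough to $(s,\,1+g_*)$ that the classical Sobolev exponent $\frac{dq'}{d-tq'}$ exceeds the prescribed target $q$. The hypothesis $s(g_*+1)<d$ guarantees that $\frac{d(g_*+1)}{d-s(g_*+1)}$ is a finite positive number, so that the range of admissible $q$ is nonempty.

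First, I fix $q$ satisfying $1\leq q<\frac{d(g_*+1)}{d-s(g_*+1)}$. Since the map
\[
(t,q')\longmapsto \frac{dq'}{d-tq'}
\]
is continuous at $(s,\,1+g_*)$ with value $\frac{d(g_*+1)}{d-s(g_*+1)}>q$, there exist $t\in(0,s)$ and $q'\in[1,\,1+g_*)$ with $tq'<d$ such that $q<\frac{dq'}{d-tq'}$. This is the one small quantitative step; everything else is a chain of known embeddings.

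Second, I apply Proposition \ref{WGWspApproxEmbedProp} with these parameters to get the continuous inclusion $W^{s,G_{:}}_0(\Omega)\hookrightarrow W^{t,q'}_0(\Omega)$. Then I invoke the classical Rellich--Kondrachov compact embedding for the fractional Sobolev--Gagliardo space on the bounded Lipschitz domain $\Omega$, namely $W^{t,q'}_0(\Omega)\Subset L^{q}(\Omega)$, which holds by the choice of $q$ relative to $\frac{dq'}{d-tq'}$.

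Finally, composing a continuous linear map with a compact one yields a compact embedding, giving $W^{s,G_{:}}_0(\Omega)\Subset L^{q}(\Omega)$. There is no genuine obstacle here; the only thing requiring care is ensuring the strict inequalities $t<s$ and $q'<1+g_*$ (demanded by Proposition \ref{WGWspApproxEmbedProp}) are compatible with $q<\frac{dq'}{d-tq'}$, which is precisely the continuity argument above.
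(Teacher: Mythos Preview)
Your proposal is correct and follows essentially the same approach as the paper: composing the continuous embedding $W^{s,G_{:}}_0(\Omega)\hookrightarrow W^{t,q'}_0(\Omega)$ from Proposition \ref{WGWspApproxEmbedProp} with the classical fractional Rellich--Kondrachov compactness, after choosing $(t,q')$ close enough to $(s,1+g_*)$ so that the target exponent $q$ lies below the critical Sobolev exponent. Your continuity argument for the parameter choice is slightly more explicit than the paper's brief remark, but the strategy is identical.
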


\begin{remark}
Observe that in the Hilbertian framework of \eqref{tildegcond} the Banach space $W^{s,G_{:}}_0(\Omega)$, with the assumption \eqref{tildegcond} is algebraically and topologically) equivalent to the fractional  Sobolev space $H^s_0(\Omega)=W^{s,2}_0(\Omega)$, which is a Hilbert space, while $W^{s,G_{:}}_0(\Omega)$ is not. 
\end{remark}

\subsection{The Quasilinear Fractional Dirichlet Problem}\label{sec:DirichletPb}

Recalling that $G_{:}$ is a strictly convex and differentiable function in $r$ for a.e. $x,y$, we can regard $\mathcal{L}_g^s$ as the potential operator with respect to the convex functional \begin{equation}\label{LgFuncPotential}\Gamma_{s,G_{:}}(v)=\int_{\mathbb{R}^d}\int_{\mathbb{R}^d}G_{:}\left(|\delta^s v|\right)\frac{dx\,dy}{|x-y|^d}.\end{equation}  

As a consequence of well known results of convex analysis, there exists a unique solution to the Dirichlet problem, given formally by $\mathcal{L}_g^su=F$ in $\Omega$, $u=0$ in $\Omega^c$.

\begin{proposition}\label{EllipticDirichletExistThm}[Proposition 4.6 of \cite{AlbuquerqueAssisCarvalhoSalort2023-FracMusielakSobolevSpaces}]
Let $0<s<1$ and $\Omega\subset\mathbb{R}^d$ be a bounded domain. For $F\in W^{-s,G_{:}^*}(\Omega)$, there exists a unique variational solution $u\in W^{s,G_{:}}_0(\Omega)$ to 
\begin{equation}\label{EllipticProbEq}\langle \mathcal{L}_g^su,v\rangle = \langle F,v\rangle \quad\forall v\in W^{s,G_{:}}_0(\Omega),\end{equation} which is equivalent to the minimum over $W^{s,G_{:}}_0(\Omega)$ of the functional $\mathcal{G}_s:W^{s,G_{:}}_0(\Omega)\to\mathbb{R}$ defined by \begin{equation}\label{LgFunc}\mathcal{G}_s(v)=\int_{\mathbb{R}^d}\int_{\mathbb{R}^d}G_{:}\left(|\delta^s v|\right)\frac{dx\,dy}{|x-y|^d}-\langle F,v\rangle\quad\forall v\in W^{s,G_{:}}_0(\Omega).\end{equation}
\end{proposition}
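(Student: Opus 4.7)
The plan is to apply the direct method of the calculus of variations to the functional $\mathcal{G}_s$, exploiting that the $C^1$ property of $\Gamma_{s,G_{:}}$ together with the strict convexity of $G_{:}(x,y,\cdot)$ converts the minimization into the variational equation \eqref{EllipticProbEq}. Concretely, the four ingredients I would verify are: strict convexity, weak lower semicontinuity, coercivity, and the Euler--Lagrange identification.

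Strict convexity of $\Gamma_{s,G_{:}}$ (and hence of $\mathcal{G}_s$, since the perturbation $-\langle F,\cdot\rangle$ is linear) is inherited from the strict convexity of $G_{:}(x,y,\cdot)$ guaranteed by \eqref{gGrowthCond}. Weak lower semicontinuity of $\Gamma_{s,G_{:}}$ on $W^{s,G_{:}}(\mathbb{R}^d)$ is already recalled in the excerpt (Lemmas 3.1 and 3.3 of \cite{AzroulBenkiraneShimiSrati2022-NonlocalFractionalMusielakSobolev}), and the linear term $v\mapsto\langle F,v\rangle$ is weakly continuous; thus $\mathcal{G}_s$ is weakly l.s.c.\ on $W^{s,G_{:}}_0(\Omega)$. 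Since $W^{s,G_{:}}_0(\Omega)$ is reflexive (by the $\Delta_2$-condition), any minimizing sequence admits a weakly convergent subsequence whose limit attains the infimum, producing a minimizer $u$. Uniqueness then follows from the strict convexity.

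For coercivity, I would combine the growth condition \eqref{GGrowthCond} with the modular--norm relationship in the $\Delta_2$ setting. Integrating $\bar{g}/G\geq (1+g_*)/r$ gives $G(x,y,r)\geq G(x,y,1)\,r^{1+g_*}$ for $r\geq1$ a.e.\ in $x,y$, from which one derives $\Gamma_{s,G_{:}}(v)\geq [v]_{s,G}^{1+g_*}-C$ whenever $[v]_{s,G}$ is sufficiently large. On the other hand, $|\langle F,v\rangle|\leq \|F\|_{W^{-s,G_{:}^*}(\Omega)}\,\|v\|_{s,G}$, and by the Poincaré inequality (Lemma \ref{Poincare}) the norm $\|v\|_{s,G}$ is equivalent to $[v]_{s,G}$. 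Since $g_*>0$, the superlinear term $[v]_{s,G}^{1+g_*}$ eventually dominates the linear one, yielding $\mathcal{G}_s(v)\to+\infty$ as $\|v\|_{s,G}\to+\infty$.

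The final step, namely the equivalence with the Euler--Lagrange equation, uses that $\Gamma_{s,G_{:}}\in C^1(W^{s,G_{:}}(\mathbb{R}^d),\mathbb{R})$ with Gâteaux derivative exactly $\mathcal{L}_g^s$ (a direct differentiation under the integral, justified by the Lipschitz and growth hypotheses on $g$). At the minimizer $u$, setting $\frac{d}{dt}\mathcal{G}_s(u+tv)\big|_{t=0}=0$ for every $v\in W^{s,G_{:}}_0(\Omega)$ yields \eqref{EllipticProbEq}; conversely, convexity of $\mathcal{G}_s$ promotes any critical point to a global minimizer. The main subtle step is the coercivity estimate, because relating the modular $\Gamma_{s,G_{:}}$ to the Luxemburg seminorm $[\cdot]_{s,G}$ in the $(x,y)$-dependent Musielak--Orlicz framework requires using \eqref{gGrowthCond} carefully rather than the classical Orlicz computation; everything else is a standard application of convex analysis in reflexive Banach spaces.
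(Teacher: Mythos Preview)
Your proposal is correct and is exactly the standard direct-method argument the paper has in mind. Note that the paper does not actually supply a proof of this proposition: it is quoted verbatim from \cite{AlbuquerqueAssisCarvalhoSalort2023-FracMusielakSobolevSpaces} (Proposition~4.6 there), and the only justification given in the present paper is the sentence preceding the statement, namely that the result is ``a consequence of well known results of convex analysis'' since $G_{:}$ is strictly convex and differentiable in $r$. Your outline (strict convexity, weak l.s.c., coercivity via the modular--norm comparison $\Gamma_{s,G_{:}}(v)\geq[v]_{s,G}^{1+g_*}$ for $[v]_{s,G}>1$, and the Euler--Lagrange identification using $\Gamma_{s,G_{:}}\in C^1$) is precisely how that cited result is established, so there is nothing to compare.
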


In the next Theorem we extend the global boundedness of the solutions for the anisotropic Dirichlet problem, under the uniform assumption \eqref{gGrowthCond} on $g$. 


\begin{theorem}\label{UnifBddThmMusielak}
Suppose $F=f\in L^m(\Omega)$, with $m > \frac{d}{s(g_*+1)}$ and $g$ satisfies \eqref{gGrowthCond} with 
$s(g_*+1)<d$. Let $u$ denote the solution of the Dirichlet problem \eqref{EllipticProbEq}. Then there exists a constant $C$, depending only on $g_*$, $g^*$, $k_*$, $k^*$, $d$, $\Omega$, $\norm{u}_{W^{s,G_{:}}_0(\Omega)}$, $\norm{f}_{L^m(\Omega)}$ and $s$, such that \[\norm{u}_{L^\infty(\Omega)}\leq C.\] 
\end{theorem}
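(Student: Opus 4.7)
The plan is to adapt the De Giorgi--Stampacchia truncation method, following the fractional Laplacian argument of \cite{PeralBoundedSolnEstimates}, to the generalised Orlicz setting by exploiting the lower bound $1+g_*$ in \eqref{GGrowthCond} together with the Sobolev embedding of Corollary \ref{CompactnessTheoremRellichLp}.

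First, for each level $k>0$ I would test \eqref{EllipticProbEq} with $w_k:=(u-k)_+\in W^{s,G_{:}}_0(\Omega)$, and denote $A_k=\{x\in\Omega:u(x)>k\}$. The essential pointwise fact is that
\[
g(x,y,|\delta^s u|)\,\delta^s u(x,y)\,\delta^s w_k(x,y)\ \geq\ g(x,y,|\delta^s w_k|)\,|\delta^s w_k(x,y)|^2
\]
for a.e.\ $x,y\in\mathbb{R}^d$. This is checked by splitting into the three cases (both of $u(x),u(y)$ above $k$; one above and one below; both below): when both are above $k$, the two quantities coincide; when only one is above, $|\delta^s u|\geq|\delta^s w_k|$ while the signs agree, and the monotonicity of $r\mapsto \bar g(x,y,r)$ yields the inequality; the remaining case is trivial. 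Combined with \eqref{GGrowthCond} in the form $g(x,y,r)r^2\geq (1+g_*)G(x,y,r)$, integrating gives
\[
\langle \mathcal{L}_g^s u,w_k\rangle\ \geq\ (1+g_*)\,\Gamma_{s,G_{:}}(w_k).
\]

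Next, I would estimate the right-hand side by H\"older and the Sobolev embedding. By Corollary \ref{CompactnessTheoremRellichLp}, fix $q$ with $m'<q<\tfrac{d(g_*+1)}{d-s(g_*+1)}$, which is possible precisely because $m>\tfrac{d}{s(g_*+1)}$ and $s(g_*+1)<d$; then
\[
\int_\Omega f\,w_k\,dx\ \leq\ \|f\|_{L^m(\Omega)}\,\|w_k\|_{L^q(\Omega)}\,|A_k|^{1-\frac{1}{m}-\frac{1}{q}}.
\]
The key reduction is to convert $\Gamma_{s,G_{:}}(w_k)$ into a power of $\|w_k\|_{L^q(\Omega)}$. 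For $k$ large one has $[w_k]_{s,G}\leq 1$ eventually, so the inequality $\Gamma_{s,G_{:}}(w_k)\geq [w_k]_{s,G}^{g^*+1}$ applies; alternatively, in the regime $[w_k]_{s,G}\geq 1$, $\Gamma_{s,G_{:}}(w_k)\geq [w_k]_{s,G}^{g_*+1}$ is used. Coupling with Poincar\'e (Lemma \ref{Poincare}) and the embedding into $L^q$, I obtain in either regime an inequality of the form $\Gamma_{s,G_{:}}(w_k)\geq c\,\|w_k\|_{L^q(\Omega)}^{\,\sigma}$ for some $\sigma\in\{g_*+1,g^*+1\}$, valid for $k$ larger than a threshold depending on $\|u\|_{W^{s,G_{:}}_0(\Omega)}$.

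Putting this together I get, for $h>k$ large enough,
\[
(h-k)^{\sigma}|A_h|^{\sigma/q}\ \leq\ \|w_k\|_{L^q(\Omega)}^{\sigma}\ \leq\ C\,\|f\|_{L^m(\Omega)}^{\sigma/(\sigma-1)}\,|A_k|^{\frac{\sigma}{\sigma-1}\left(1-\frac{1}{m}-\frac{1}{q}\right)},
\]
so that $|A_h|\leq C(h-k)^{-q}|A_k|^{\beta}$ with $\beta=\tfrac{q}{\sigma-1}\left(1-\tfrac{1}{m}-\tfrac{1}{q}\right)$. The choice $m>\tfrac{d}{s(g_*+1)}$ together with $q$ close enough to $\tfrac{d(g_*+1)}{d-s(g_*+1)}$ makes $\beta>1$, so Stampacchia's lemma yields $|A_{k_0}|=0$ for some finite $k_0$, giving $u\leq k_0$ a.e.\ in $\Omega$; testing instead with $-(u+k)_-$ and repeating the argument produces the matching lower bound, hence the claimed global $L^\infty$-estimate with $C$ depending only on the listed quantities.

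The main obstacle I anticipate is Step (iii) above: turning the modular lower bound $\Gamma_{s,G_{:}}(w_k)$ into a genuine power of an $L^q$-norm in a way that is uniform in $k$. The modular and the Luxemburg seminorm scale differently in the two regimes $[w_k]_{s,G}\lessgtr 1$, and the growth $G(x,y,r)\asymp r^{1+g_*}$ is only reliable away from $r=0$; handling the small-modular regime (which is the relevant one for large $k$) without losing the factor $1+g_*$ in the exponent requires careful use of the $\Delta_2$-condition and of the growth envelope $r^{g_*+1}\lesssim G\lesssim r^{g^*+1}$ deduced from \eqref{GGrowthCond}.
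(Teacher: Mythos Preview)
Your overall strategy coincides with the paper's: test \eqref{EllipticProbEq} with the level-$k$ truncation, establish the pointwise inequality yielding $\langle \mathcal{L}_g^s u, w_k\rangle \geq (1+g_*)\,\Gamma_{s,G_{:}}(w_k)$, apply H\"older on the right, and close with Lemma~\ref{StampacchiaLemma}. (The paper uses the two-sided $P_k(u)$ instead of $(u-k)_+$, which is a cosmetic difference.) However, the obstacle you yourself flag in step~(iii) is genuine and, as your argument stands, fatal. In the regime $[w_k]_{s,G}\leq 1$---precisely the one that matters for large $k$---the modular--norm relation only gives $\Gamma_{s,G_{:}}(w_k)\geq [w_k]_{s,G}^{\,g^*+1}$, forcing $\sigma=g^*+1$. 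Your iteration exponent then becomes $\beta=\tfrac{q}{g^*}\bigl(1-\tfrac{1}{m}-\tfrac{1}{q}\bigr)$, and $\beta>1$ would require $q(1-1/m)>g^*+1$. But the largest admissible $q$ from Corollary~\ref{CompactnessTheoremRellichLp} is just below $\tfrac{d(g_*+1)}{d-s(g_*+1)}$, and the hypothesis $m>\tfrac{d}{s(g_*+1)}$ involves only $g_*$; when $g^*>g_*$ this inequality can fail (take $d$ large or $s$ small), so Stampacchia's lemma does not apply.

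The paper circumvents this by never passing through the Luxemburg seminorm $[w_k]_{s,G}$. Instead it invokes the embedding of Proposition~\ref{WGWspApproxEmbedProp} at the \emph{modular} level, writing directly
\[
\Gamma_{s,G_{:}}(P_k(u))\ \geq\ C\,\|P_k(u)\|_{W^{t,q}_0(\Omega)}^{\,q}\ \geq\ C'\,\|P_k(u)\|_{L^{q^*}(\Omega)}^{\,q}
\]
with $q=1+g_*-\epsilon$ and $0<t<s$, followed by the classical Sobolev embedding $W^{t,q}_0\hookrightarrow L^{q^*}$. The point is that this comparison carries the exponent $q\approx 1+g_*$ (not $1+g^*$) uniformly in $k$, because it descends from the growth lower bound $G(x,y,r)\gtrsim r^{1+g_*}$ implicit in \eqref{GGrowthCond} rather than from the two-sided modular--norm inequalities. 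The resulting Stampacchia exponent is $\tfrac{q^*}{g_*-\epsilon}\bigl(1-\tfrac{1}{q^*}-\tfrac{1}{m}\bigr)$, and this exceeds $1$ under the stated assumption on $m$ once $q^*$ is chosen close enough to $\tfrac{d(g_*+1)}{d-s(g_*+1)}$ and $\epsilon$ is small. In short, the missing idea in your proposal is to replace the route ``modular $\to$ Luxemburg norm $\to L^q$'' by the route ``modular $\to$ $W^{t,q}$-seminorm $\to L^{q^*}$'', which keeps $g_*$ rather than $g^*$ in the decisive exponent.
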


The proof extends the one given in Section 3.1.2 of \cite{PeralBoundedSolnEstimates}. It uses the following numerical iteration estimate, the proof of which is given in Lemma 4.1 of \cite{StampacchiaEllipticEqsLmReg}.

\begin{lemma}\label{StampacchiaLemma}
    Let $\Psi:\mathbb{R}^+\to\mathbb{R}^+$ be a nonincreasing function such that 
    \[\Psi(h)\leq \frac{M}{(h-k)^\gamma}\Psi(k)^\delta\quad\forall h>k>0,\] where $M,\gamma>0$ and $\delta>1$. Then $\Psi(d)=0$, where $d^\gamma = M\Psi(0)^{\delta-1}2^\frac{\delta\gamma}{\delta-1}$.
\end{lemma}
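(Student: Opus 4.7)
The plan is to carry out a dyadic iteration on the level $h$ in the given inequality. Define the sequence $k_n = d(1 - 2^{-n})$ for $n \geq 0$, so that $k_0 = 0$, the sequence is strictly increasing, $k_n \to d$, and the gap satisfies $k_{n+1} - k_n = d\, 2^{-(n+1)}$. Applying the hypothesis with $h = k_{n+1}$ and $k = k_n$ yields
\begin{equation*}
\Psi(k_{n+1}) \leq \frac{M}{(k_{n+1}-k_n)^\gamma}\Psi(k_n)^\delta = \frac{M\, 2^{(n+1)\gamma}}{d^\gamma}\Psi(k_n)^\delta.
\end{equation*}

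The next step is to prove by induction that $\Psi(k_n) \leq \Psi(0)\, q^{-n}$ for a suitable base $q > 1$. Making this ansatz in the recursion above, the induction passes provided $q$ satisfies simultaneously $q^\delta \geq 2^\gamma$ (so the factor $2^{n\gamma} q^{-n\delta}$ decays with $n$) and $M\Psi(0)^{\delta-1} 2^\gamma q \leq d^\gamma$ (so that the base case propagates). The natural choice $q = 2^{\gamma/(\delta-1)}$ makes the first condition strict and reduces the second to $d^\gamma \geq M\Psi(0)^{\delta-1} 2^{\delta\gamma/(\delta-1)}$, which is precisely the definition of $d$ given in the statement. Hence the induction closes with this $q$, and $\Psi(k_n) \leq \Psi(0)\, 2^{-n\gamma/(\delta-1)}$ for all $n$.

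Finally, since $\delta > 1$ and $\gamma > 0$ we have $q > 1$, so $\Psi(k_n) \to 0$ as $n \to \infty$. Because $\Psi$ is nonincreasing and $k_n < d$ for every $n$, one has $\Psi(d) \leq \Psi(k_n)$ for all $n$; letting $n \to \infty$ gives $\Psi(d) \leq 0$, and combined with $\Psi \geq 0$ this concludes $\Psi(d) = 0$.

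The only delicate point is the algebraic calibration of the geometric ratio $q$: the exponent $\gamma/(\delta-1)$ is uniquely determined by requiring that the superlinearity $\delta > 1$ exactly absorbs the blow-up factor $2^{(n+1)\gamma}$ in each iteration, and this calibration is what forces the specific form $d^\gamma = M\Psi(0)^{\delta-1}2^{\delta\gamma/(\delta-1)}$ in the conclusion. Once this choice is made, everything else is a direct induction together with one limiting argument using monotonicity.
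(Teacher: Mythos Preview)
Your argument is correct and is precisely the classical dyadic iteration that the paper invokes (the paper does not reproduce a proof but cites Lemma~4.1 of Stampacchia). One small slip: the condition guaranteeing the $n$-dependence cancels in the inductive step is $q^{\delta-1}\ge 2^\gamma$ rather than $q^\delta\ge 2^\gamma$, but your choice $q=2^{\gamma/(\delta-1)}$ satisfies the correct condition with equality, so the computation and conclusion stand as written.
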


Next, we introduce the truncation function $T_k$ and its complement $P_k$ defined as 
\[T_k(u)=-k\vee(k\wedge u),\quad P_k(u)= u-T_k(u)\quad \text{ for every }k\geq0,\] which will be useful for the proof. 

Given the above definitions of $T_k$ and $P_k$, it is straightforward to see (by considering the cases of $v(x),v(y)$ $\geq k$ and $\leq k$) that \begin{equation}\label{TruncationEst}[T_k(v(x))-T_k(v(y))][P_k(v(x))-P_k(v(y))]\geq0\quad\text{ a.e. in }\Omega\times\Omega.\end{equation} 

As a result, we have under the assumptions of this theorem, the following Lemma.
\begin{lemma}
    Take $v\in W^{s,G_{:}}_0(\Omega)$. If $\Psi:\mathbb{R}\to\mathbb{R}$ is a Lipschitz function such that $\Psi(0)=0$, then $\Psi(v)\in W^{s,G_{:}}_0(\Omega)$. In particular, for any $k\geq0$, $T_k(v),P_k(v)\in W^{s,G_{:}}_0(\Omega)$, and 
    \[(g_*+1)\Gamma_{s,G_{:}}(P_k(v))\leq \langle \mathcal{L}^s_gv, P_k(v)\rangle.\]
\end{lemma}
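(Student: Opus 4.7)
For the first clause, let $L$ denote the Lipschitz constant of $\Psi$. Since $\Psi(0)=0$, we have the pointwise bounds $|\Psi(v)(x)|\le L|v(x)|$ and $|\delta^s\Psi(v)(x,y)|\le L\,|\delta^s v(x,y)|$. Monotonicity of $\hat{G}_\cdot$ and $G_{:}$ in $r$ together with the $\Delta_2$-condition following from \eqref{GGrowthCond} (iterated to absorb the factor $L$) then give $\Gamma_{\hat{G}_\cdot}(\Psi(v))\le C_L\Gamma_{\hat{G}_\cdot}(v)$ and $\Gamma_{s,G_{:}}(\Psi(v))\le C_L\Gamma_{s,G_{:}}(v)$, so $\Psi(v)\in W^{s,G_{:}}(\mathbb{R}^d)$. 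Because $v=0$ on $\mathbb{R}^d\setminus\Omega$ and $\Psi(0)=0$, also $\Psi(v)=0$ on $\mathbb{R}^d\setminus\Omega$; to place $\Psi(v)$ in $W^{s,G_{:}}_0(\Omega)$ I would take $v_n\in C_c^\infty(\Omega)$ with $v_n\to v$ in norm, observe that each $\Psi(v_n)$ is Lipschitz and compactly supported in $\Omega$ (hence in $W^{s,G_{:}}_0(\Omega)$ after mollification), and use the dominated bound $|\delta^s(\Psi(v_n)-\Psi(v))|\le L\,|\delta^s(v_n-v)|$ together with $\Delta_2$ to pass to the limit in norm. Applying this with $T_k$ and $P_k=\mathrm{id}-T_k$, which are both $1$-Lipschitz and vanish at $0$, yields $T_k(v),P_k(v)\in W^{s,G_{:}}_0(\Omega)$.

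For the inequality, note that both $T_k$ and $P_k$ are non-decreasing on $\mathbb{R}$, so by \eqref{TruncationEst} the quantities $\delta^s T_k(v)(x,y)$ and $\delta^s P_k(v)(x,y)$ share the same sign (or one vanishes) for a.e.\ $(x,y)\in\mathbb{R}^d\times\mathbb{R}^d$. Combined with the identity $\delta^s v=\delta^s T_k(v)+\delta^s P_k(v)$, this yields
\[
|\delta^s v|=|\delta^s T_k(v)|+|\delta^s P_k(v)|\ge|\delta^s P_k(v)|,\qquad \delta^s v\cdot\delta^s P_k(v)=|\delta^s v|\,|\delta^s P_k(v)|.
\]
Writing $g(x,y,r)\,r=\bar{g}(x,y,r)$ and using that $\bar{g}(x,y,\cdot)$ is increasing on $\mathbb{R}^+$ by property (1), the integrand of $\langle\mathcal{L}_g^s v,P_k(v)\rangle$ satisfies pointwise
\[
g(x,y,|\delta^s v|)\,\delta^s v\,\delta^s P_k(v) = \bar{g}(x,y,|\delta^s v|)\,|\delta^s P_k(v)| \ge \bar{g}(x,y,|\delta^s P_k(v)|)\,|\delta^s P_k(v)|.
\]
The lower bound $r\,\bar{g}(x,y,r)\ge(g_*+1)\,G(x,y,r)$ from \eqref{GGrowthCond}, evaluated at $r=|\delta^s P_k(v)|$, then bounds this from below by $(g_*+1)\,G_{:}(|\delta^s P_k(v)|)$. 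Integrating against $|x-y|^{-d}\,dx\,dy$ on $\mathbb{R}^d\times\mathbb{R}^d$ delivers the claimed estimate.

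\textbf{Main obstacle.} The substantive step is the zero-trace part of the first clause: composition with a merely Lipschitz $\Psi$ destroys smoothness, so membership in $W^{s,G_{:}}_0(\Omega)$ (rather than merely in $W^{s,G_{:}}(\mathbb{R}^d)$ with support in $\overline{\Omega}$) requires the approximation argument sketched above, with $\Delta_2$ ensuring that modular convergence upgrades to norm convergence. The inequality itself is then a clean algebraic consequence of the sign identity $\delta^s v\cdot\delta^s P_k(v)=|\delta^s v|\,|\delta^s P_k(v)|$ and the structural growth in \eqref{GGrowthCond}.
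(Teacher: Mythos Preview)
Your argument is correct, and for the key inequality it is in fact cleaner than the paper's.

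For the first clause, both you and the paper control $\Gamma_{s,G_{:}}(\Psi(v))$ via the Lipschitz bound $|\delta^s\Psi(v)|\le L\,|\delta^s v|$ together with \eqref{GGrowthCond}. You go further and sketch the approximation argument needed to place $\Psi(v)$ in the closure of $C_c^\infty(\Omega)$; the paper leaves this point implicit and only bounds the modular.

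For the inequality the routes differ. The paper first argues the intermediate bound $\langle\mathcal{L}_g^s v,P_k(v)\rangle\ge\langle\mathcal{L}_g^s P_k(v),v\rangle$, then uses $\delta^s v\cdot\delta^s P_k(v)\ge(\delta^s P_k(v))^2$ from \eqref{TruncationEst} to reach $\int g_{:}(|\delta^s P_k(v)|)\,(\delta^s P_k(v))^2\,|x-y|^{-d}$, and finally invokes \eqref{GGrowthCond}. Your observation that $\delta^s T_k(v)$ and $\delta^s P_k(v)$ share a sign upgrades \eqref{TruncationEst} to the identity $\delta^s v\cdot\delta^s P_k(v)=|\delta^s v|\,|\delta^s P_k(v)|$, which lets you write the integrand directly as $\bar{g}_{:}(|\delta^s v|)\,|\delta^s P_k(v)|$ and then descend in one step via monotonicity of $\bar{g}_{:}$ and the lower bound in \eqref{GGrowthCond}. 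This buys you something real: your argument uses only that $\bar{g}_{:}=r\,g_{:}$ is increasing, whereas the paper's intermediate step, read literally, asks for $g_{:}(|\delta^s v|)\ge g_{:}(|\delta^s P_k(v)|)$, i.e.\ monotonicity of $g_{:}$ itself, which \eqref{gGrowthCond} does not guarantee when $g_*<1$ (e.g.\ $g(r)=r^{p-2}$ with $1<p<2$). Your direct route avoids this issue entirely.
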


\begin{proof}
    We first show the regularity of $T_k(v)$ and $P_k(v)$. Let $\lambda_\Psi>0$ be the Lipschitz constant of $\Psi$. As such, for $x, y$ in     $\mathbb{R}^d, x\neq y$,
    \[|\delta^s \Psi(v)(x,y)| = \frac{|\Psi(v(x)) - \Psi(v(y))|}{|x-y|^s} \leq \lambda_\Psi \frac{|v(x) - v(y)|}{|x-y|^s} = \lambda_\Psi |\delta^s v(x,y)|.\] 
    Since $r\mapsto rg(\cdot,\cdot,r)$ is monotone increasing, as a result of the assumption \eqref{gGrowthCond}, we have that
    \[|\delta^s \Psi(v)|g\left(x,y,|\delta^s \Psi(v)|\right)\leq |\lambda_\Psi \delta^s v|g\left(x,y,|\lambda_\Psi \delta^s v|\right)\] 
    for a.e. $x, y$ in     $\mathbb{R}^d$, and so
    \begin{align}
    (g_*+1)\Gamma_{s,G_{:}}(\Psi(v)) & \leq \int_{\mathbb{R}^d}\int_{\mathbb{R}^d}g\left(x,y,|\delta^s \Psi(v)|\right) |\delta^s \Psi(v)|^2 \frac{dx\,dy}{|x-y|^d}\label{UnifBddEst1}\\
    & \leq \int_{\mathbb{R}^d}\int_{\mathbb{R}^d}g\left(x,y,|\lambda_\Psi \delta^s v|\right) |\lambda_\Psi \delta^s v|^2 \frac{dx\,dy}{|x-y|^d} \leq (g^*+1)\lambda_\Psi^2 \Gamma_{s,G_{:}}(\lambda_\Psi v)\nonumber
    \end{align} by \eqref{GGrowthCond}. Then, the regularity of $T_k(v)$ and $P_k(v)$ follows since $T_k$ and $P_k$ are Lipschitz functions with Lipschitz constant 1.

    Finally we consider $\langle \mathcal{L}^s_gv, P_k(v)\rangle$. Since $P_k$ is a monotone Lipschitz function with Lipschitz constant 1, we can apply a similar argument as above to obtain that 
    \begin{align*}
    \langle \mathcal{L}^s_gv, P_k(v)\rangle & = \int_{\mathbb{R}^d}\int_{\mathbb{R}^d}g\left(x,y,|\delta^s v|\right) \delta^s v \, \delta^s P_k(v) \frac{dx\,dy}{|x-y|^d}\\
    & \geq \int_{\mathbb{R}^d}\int_{\mathbb{R}^d}g\left(x,y,|\delta^s P_k(v)|\right) \delta^s P_k(v) \delta^s v \frac{dx\,dy}{|x-y|^d} = \langle \mathcal{L}^s_g P_k(v), v \rangle
    \end{align*}
    since $g$ is non-negative and 
    \begin{align*}
        \delta^s v \, \delta^s P_k(v) & = \frac{P_k(v(x)) - P_k(v(y))}{|x-y|^s} \frac{v(x) - v(y)}{|x-y|^s}  \\
        & = \frac{\left(P_k(v(x)) - P_k(v(y))\right)^2 + \left(T_k(v(x)) - T_k(v(y))\right)\left(P_k(v(x)) - P_k(v(y))\right)}{|x-y|^{2s}}\\
        & \geq \frac{\left(P_k(v(x)) - P_k(v(y))\right)^2 }{|x-y|^{2s}} >0,
    \end{align*} by recalling that $v=T_k(v)+P_k(v)$ as well as using the estimate \eqref{TruncationEst}.
    Using this inequality, we therefore have 
    \begin{align*}
        \langle \mathcal{L}^s_gv, P_k(v)\rangle & \geq \langle \mathcal{L}^s_g P_k(v), v \rangle = \int_{\mathbb{R}^d}\int_{\mathbb{R}^d}g\left(x,y,|\delta^s P_k(v)|\right) \delta^s P_k(v) \delta^s v \frac{dx\,dy}{|x-y|^d} \\
        & \geq \int_{\mathbb{R}^d}\int_{\mathbb{R}^d}g\left(x,y,|\delta^s P_k(v)|\right) \left(P_k(v(x)) - P_k(v(y))\right)^2 \frac{dx\,dy}{|x-y|^{d+2s}},
    \end{align*}
    hence the desired result by \eqref{UnifBddEst1}.
\end{proof}

Making use of the above estimates, we prove the uniform boundedness of the unique solution to the nonlinear Dirichlet problem.

\begin{proof}[Proof of Theorem \ref{UnifBddThmMusielak}]
We take $P_k(u)$ to be the test function in the variational formulation of \eqref{EllipticProbEq}. 
Combining this with the previous lemma, we easily obtain that
\[(g_*+1)\Gamma_{s,G_{:}}(P_k(u(x))) \leq \langle \mathcal{L}^s_gu(x), P_k(u(x))\rangle = \int_{A_k}f(x)P_k(u(x))\,dx,\] where $A_k=\{x\in\Omega:u\geq k\}$. 

To estimate the left-hand-side, we make use of the inclusion of $W^{s,G_{:}}(\Omega)\hookrightarrow W^{t,q}(\Omega)$ spaces. Then 
\[\Gamma_{s,G_{:}}(P_k(u(x)))\geq C\norm{P_k(u(x))}_{W^{t,q}_0(\Omega)}^q \geq C'\norm{P_k(u(x))}_{L^{q^*}(\Omega)}^q\] for an embedding constant $C$ and exponent $q=1+g_*-\epsilon$ of \eqref{WGWspApproxEmbed} for some small $\epsilon>0$, and Sobolev embedding constants $C'/C$ and $t,q^*$ of Corollary \ref{CompactnessTheoremRellichLp} (see, for instance, Theorem 6.5 of \cite{HitchhikerGuide}). 

To estimate the right-hand-side, we apply the H\"older's inequality. Then, for any $m>0$, we have
\[\left|\int_{A_k}f(x)P_k(u(x))\,dx\right| \leq \norm{f}_{L^m(\Omega)}\norm{P_k(u(x))}_{L^{q^*}(\Omega)}|A_k|^{1-\frac{1}{q^*}-\frac{1}{m}}.\]

Combining these estimates with the crucial observation that for any $h>k$, $A_h\subset A_k$ and $P_k(u)\chi_{A_h}\geq h-k$, we obtain that
\[(h-k)|A_h|^{\frac{g_*-\epsilon}{q^*}}\leq \frac{1}{k_* C' (g_*+1-\epsilon)} \norm{f}_{L^m(\Omega)}|A_k|^{1-\frac{1}{q^*}-\frac{1}{m}},\]
or
\[|A_h|\leq \frac{C''}{(h-k)^{\frac{q^*}{g_*-\epsilon}}} \norm{f}_{L^m(\Omega)}^{\frac{q^*}{g_*}}|A_k|^{\frac{q^*}{g_*-\epsilon}(1-\frac{1}{q^*}-\frac{1}{m})}\] for a constant $C''>0$.

Finally, observe that for $m > \frac{d}{s(g_*+1)}$,
\[\frac{q^*}{g_*-\epsilon}\left(1-\frac{1}{q^*}-\frac{1}{m}\right) > 1\] for large enough $q^*$ and small enough $\epsilon>0$. Therefore, the assumptions of Lemma \eqref{StampacchiaLemma} above are all satisfied, and we can take $\Psi(h)=|A_h|$ in Lemma \eqref{StampacchiaLemma} to obtain that there exists a $k_0$ such that $\Psi(k)\equiv0$ for all $k\geq k_0$, thus $\esssup_\Omega u \leq k_0$.
\end{proof}



\begin{remark}
    Note that the assumption \eqref{gGrowthCond} implies that $G_{:}$ satisfies the $\Delta_2$ condition, which is weaker than the $\Delta'$ condition given by 
    \begin{equation}\label{DeltaPrime}G_{:}(rt)\leq CG_{:}(r)G_{:}(t), \quad \text{ for }r,t>0\text{ and some }C>0,\end{equation} 
    and used in the $L^\infty$-estimate in \cite{CarvalhoSilvaCarlosBahrouni2022BoundednessFracOrliczMoser}.
\end{remark}

Recently in the case of the fractional $p(x,y)$-Laplacian an interesting local H\"older regularity result for the solution of the Dirichlet problem has been proved, extending previous results in the case of constant $p$. Here $C^\alpha(\omega)$ denotes the space of H\"older continuous functions in $\omega$ for some $0<\alpha<1$.

\begin{theorem}\label{HolderRegMusielak}
Let $F=f\in L^\infty(\Omega)$. Suppose $g(x,y,r)$ is of the form $|r|^{p(x,y)-2}K(x,y)$ as in the fractional $p$-Laplacian $\mathcal{L}^s_p$ in \eqref{LpLap} for $1<p_-\leq p(x,y)\leq p_+<\infty$, and $K$ satisfies \eqref{KAssump}, with $p(\cdot,\cdot)$ and $K(\cdot,\cdot)$ symmetric. 
\begin{enumerate}[label=(\alph*)]
    \item Suppose further that $p(x,y)$ is log-H\"older continuous on the diagonal $D=\{(x,x):x\in\Omega\}$, i.e. 
\[\sup_{0<r\leq 1/2}\left[\log\left(\frac{1}{r}\right)\sup_{B_r\subset\Omega}\sup_{x_2,y_1,y_2\in B_r}|p(x_1,y_1)-p(x_2,y_2)|\right]\leq C \quad\text{ for some }C>0.\] Then, the solution $u$ of the Dirichlet problem \eqref{EllipticProbEq} is locally H\"older continuous, i.e. \[u\in C^\alpha(\Omega)\text{ for some }0<\alpha<1.\] 

\item In the case where $p_-=p_+=p$
, the solution $u$ of \eqref{EllipticProbEq} is globally H\"older continuous and satisfies \begin{equation}u\in C^\alpha(\bar{\Omega})\quad\text{ such that }\quad  \norm{u}_{C^\alpha(\bar{\Omega})}\leq C_s\end{equation} for some $0<\alpha<1$ depending on $d$, $p$, $s$, $g_*$, $g^*$, $k_*$, $k^*$ and $\norm{f}_{L^\infty(\Omega)}$.

\end{enumerate}
\end{theorem}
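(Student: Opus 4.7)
The plan is to deduce both parts from previously established regularity theorems, after first upgrading the integrability of the solution.

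First, since $f \in L^\infty(\Omega)$ and $\Omega$ is bounded, we have $f \in L^m(\Omega)$ for every finite $m$, and in particular we may take $m > d/(s(g_*+1))$; Theorem \ref{UnifBddThmMusielak} (used in the subcritical regime $s(g_*+1)<d$; the remaining cases yield $L^\infty$-bounds directly through the Sobolev embedding of $W^{s,G_{:}}_0(\Omega)$) therefore ensures that the unique variational solution $u$ of \eqref{EllipticProbEq} lies in $L^\infty(\Omega)$, with a norm controlled by $\norm{f}_{L^\infty(\Omega)}$ and the structural constants.

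For part (a), observe that $\mathcal{L}_p^s$ with $p(x,y)$ bounded, symmetric, and log-Hölder continuous on the diagonal, and with a symmetric measurable kernel weight $K$ satisfying $k_*\leq K\leq k^*$, fits exactly into the framework of Ok \cite{Ok2023CV_HolderNonLocalVariable}. Having already ensured $u \in L^\infty(\Omega)$ and $f \in L^\infty(\Omega)$, I would invoke the local Hölder regularity theorem of that paper to conclude that there exists $0<\alpha<1$ (depending only on the structural data) such that $u \in C^\alpha(\omega)$ for every $\omega \Subset \Omega$, which is exactly the local assertion $u \in C^\alpha(\Omega)$.

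For part (b), when $p(x,y) \equiv p$ is constant, the operator reduces to the weighted fractional $p$-Laplacian. Global Hölder regularity up to the boundary for such operators with $L^\infty$ right-hand side on bounded Lipschitz domains has been established, notably by Iannizzotto--Mosconi--Squassina, and in the anisotropic setting along the lines of \cite{Palatucci2018NonAFracPLapObstacleHolder}; together with the first step this produces an exponent $0<\alpha<1$ and a constant $C_s$ with the advertised dependences such that $u \in C^\alpha(\bar\Omega)$ and $\norm{u}_{C^\alpha(\bar\Omega)} \leq C_s$.

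The main obstacle is not in producing new estimates---the result is essentially a citation of existing regularity theory---but in verifying that our form of the kernel $g(x,y,r)=K(x,y)|r|^{p(x,y)-2}$ and the symmetry/boundedness hypotheses on $K$ and $p$ match the standing assumptions of the cited works. This verification is largely routine: the two-sided bound on $K$ lets one sandwich the nonlocal energy between two pure fractional $p$-Laplacian energies with constants $k_*$ and $k^*$, after which the Caccioppoli and De Giorgi iteration, together with the nonlocal tail estimates developed in the cited papers, transfer to our weighted anisotropic setting without substantive modification.
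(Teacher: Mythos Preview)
Your proposal is correct and matches the paper's approach: the theorem is not proved from scratch but is recorded as a consequence of existing regularity results, with part (a) attributed to \cite{Ok2023CV_HolderNonLocalVariable} and part (b) to \cite{Palatucci2018NonAFracPLapObstacleHolder} and \cite{K+Kuusi+Palatucci2016CVPDE-ObstaclePbFracPLap}. Your additional step of invoking Theorem~\ref{UnifBddThmMusielak} to secure the global $L^\infty$ bound (needed to control the nonlocal tail in the cited works) is appropriate and is precisely the mechanism the paper uses, as made explicit in the analogous Remark following Theorem~\ref{EllipticDirichletRegThm1}.
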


\begin{remark} 
Part (a) of this result is given in Theorem 1.2 of \cite{Ok2023CV_HolderNonLocalVariable}. 

Part (b), when $p$ is constant and the anisotropy is in the kernel $K$, is the result given in Theorem 8 of \cite{Palatucci2018NonAFracPLapObstacleHolder} or Theorem 6 of \cite{K+Kuusi+Palatucci2016CVPDE-ObstaclePbFracPLap}, and extended in Theorem 1.3 of \cite{Piccinini2022NonA-ObstaclePbFracPLap} to the Heisenberg group. 
\end{remark}

Recalling that $L^\infty(\Omega)\subset L^{\hat{G}^*_\cdot}(\Omega)$ by \eqref{LGLpApproxEmbedOmega}, next we compile the following known regularity results for the Dirichlet problem for the operator $\mathcal{L}^s_g$ under the more restrictive assumption on $G$ being isotropic, i.e. in the Orlicz-Sobolev case. 
\begin{theorem}\label{EllipticDirichletRegThm1}
Let $u$ be the solution of the Dirichlet problem \eqref{EllipticProbEq}. Suppose $g$ is isotropic, i.e. $g=g(r)$ is independent of $(x,y)$ and $F=f\in L^\infty(\Omega)$.
\begin{enumerate}[label=(\alph*)]

    \item  If $G$ satisfies the $\Delta'$ condition, 
   then the solution $u$ of \eqref{EllipticProbEq} is such that $u\in C^{\alpha}_{loc}(\Omega)$ for some $0<\alpha<1$ depending on $d$, $s$, $g_*$ and $g^*$, and there exists $C_\omega>0$ for every $\omega\Subset\Omega$ depending only on $d$, $g_*$ and $g^*$, $\norm{f}_{L^\infty(\Omega)}$ and independent of $s\geq s_0>0$, such that, for some for $0<\alpha\leq s_0$, 
    \begin{equation}u\in C^\alpha(\omega)\quad\text{ with }\quad  \norm{u}_{C^\alpha(\omega)}\leq C_\omega.\end{equation} 

    \item If $\bar{g}=\bar{g}(r)$ is convex in $r$ and $g_*\geq1$, then $u$ is H\"older continuous up to the boundary, i.e.  
    \begin{equation}\label{uRegBonderSalortVivas}u\in C^\alpha(\bar{\Omega})\quad\text{ such that }\quad  \norm{u}_{C^\alpha(\bar{\Omega})}\leq C_s\end{equation} for $\alpha\leq s$
    where $C_s>0$ and $\alpha>0$ depends only on $s$, $d$, $g_*$, $g^*$ and $\norm{f}_{L^\infty(\Omega)}$.
\end{enumerate}
\end{theorem}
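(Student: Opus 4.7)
Since this theorem is presented as a compilation of known regularity results in the isotropic Orlicz-Sobolev setting, the plan is essentially to verify that the hypotheses we impose line up exactly with those required by the cited works, and to invoke their main theorems.

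For part (a), the plan is to apply the interior Hölder regularity theory for the fractional $g$-Laplacian developed in \cite{ChakerKimWeidner2022MathAnnHarnackFracAnisotropicGLap,ChakerKimWeidner2022CVPDERegGLap} and \cite{BonderSalortVivas2020InteriorGlobalRegFracGLap}. The structural hypothesis \eqref{gGrowthCond} with $0<g_*\leq g^*$ together with the $\Delta'$ condition \eqref{DeltaPrime} gives the two-sided $\Delta_2$ property of $G$ and its conjugate $G^*$ needed for the De Giorgi-type iteration arguments there. Since $f\in L^\infty(\Omega)\subset L^{\hat G^*_\cdot}(\Omega)$ by \eqref{LGLpApproxEmbedOmega}, the right-hand side is admissible, and the cited theorems yield $u\in C^\alpha_{\mathrm{loc}}(\Omega)$ with the quantitative estimate on $C^\alpha(\omega)$ for $\omega\Subset\Omega$, with constants depending only on the listed data. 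The one item to check carefully is that the $s$-independence of the constants (for $s\geq s_0>0$) is exactly the stability statement established in those references as $s\to 1^-$, which is the key technical point that prevents degeneration of the intrinsic estimates.

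For part (b), the plan is to invoke the global (up-to-the-boundary) Hölder regularity result in \cite{BonderSalortVivas2021HolderEigenfunctionsGLap} (and the companion paper \cite{BonderSalortVivas2020InteriorGlobalRegFracGLap}). Under the stronger structural assumptions that $\bar g=\bar g(r)$ is convex and $g_*\geq 1$, the functional $\Gamma_{s,G_{:}}$ becomes coercive and uniformly convex in a sense compatible with the barrier construction used in those works, which then produces a modulus of continuity at points of $\partial\Omega$ (using the Lipschitz regularity of the boundary). The interior estimate from part (a) — whose hypotheses are implied by those of (b) — combined with this boundary modulus, patches together to give $u\in C^\alpha(\bar\Omega)$ with the stated estimate.

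The main obstacle is not a calculation but a bookkeeping one: checking that the slightly different formulations in the cited papers (where the ambient space is sometimes $W^{s,G_{xy}}$ rather than our $W^{s,G_{:}}_0(\Omega)$, and where $F$ is sometimes a bounded measurable datum rather than a functional) are equivalent for our setting. This is handled by the embeddings of Lemma \ref{EmbeddingLemma} and the identification \eqref{IntegralForm} of $L^{\hat G^*_\cdot}(\Omega)$-data as elements of $W^{-s,G^*_{:}}(\Omega)$ acting on test functions. Once the hypotheses are matched, both statements follow directly from the quoted theorems, without any new argument required here.
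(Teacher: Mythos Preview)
Your approach is essentially the same as the paper's: the theorem is not proved from scratch but is a compilation of results from the literature, and the task is to match hypotheses. The paper attributes part (a) to Theorem~1.1 of \cite{ByunKimOk2022MathAnnLocalHolderFracGLapAnisotropic} and Theorem~1.1(i) of \cite{ChakerKimWeidner2022CVPDERegGLap}, and part (b) directly to Theorem~1.1 of \cite{BonderSalortVivas2020InteriorGlobalRegFracGLap}, which already gives the global estimate without needing to patch an interior estimate with a boundary modulus.

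There is one concrete omission in your bookkeeping. The interior results you cite for part (a) require that the \emph{tail} of $u$,
\[
\mathrm{Tail}(u;x_0,R)=\int_{\mathbb{R}^d\setminus B_R(x_0)} \bar{g}\!\left(\frac{|u(x)|}{|x-x_0|^s}\right)\frac{dx}{|x-x_0|^{d+s}},
\]
be finite. The paper disposes of this assumption by invoking Theorem~\ref{UnifBddThmMusielak}: since $f\in L^\infty(\Omega)\subset L^m(\Omega)$ for $m>d/(s(g_*+1))$, the solution $u$ is globally bounded, hence its tail is bounded. You should include this step; without it the hypotheses of the cited interior regularity theorems are not verified.

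A minor point: your claim that the hypotheses of (b) imply those of (a) is not obvious---convexity of $\bar g$ together with $g_*\ge 1$ does not immediately yield the $\Delta'$ condition \eqref{DeltaPrime}. This does not matter, however, because \cite{BonderSalortVivas2020InteriorGlobalRegFracGLap} proves the global $C^\alpha(\bar\Omega)$ estimate directly under the hypotheses of (b), so there is no need to route through part (a).
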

    
\begin{remark}\label{Remark1}
    Part (a) of this result is obtained in Theorem 1.1 of \cite{ByunKimOk2022MathAnnLocalHolderFracGLapAnisotropic} and in Theorem 1.1(i) of \cite{ChakerKimWeidner2022CVPDERegGLap}. Note that in these references, the authors require that the tail function of $u$ for the ball $B_R(x_0)$ defined by 
    \[Tail(u;x_0,R)=\int_{\mathbb{R}^d\backslash B_R(x_0)} \bar{g}\left(\frac{|u(x)|}{|x-x_0|^s}\right)\frac{dx}{|x-x_0|^{n+s}}\]
    is bounded. This assumption is not necessary when we apply it to the Dirichlet problem \eqref{EllipticProbEq}, since the solution $u$ is globally bounded by Theorem \ref{UnifBddThmMusielak}, and therefore its tail is also bounded.

Part (b) of this result is Theorem 1.1 of \cite{BonderSalortVivas2020InteriorGlobalRegFracGLap}.  The additional assumption $g_*\geq1$ implies that, in the case of the fractional $p$-Laplacian $\mathcal{L}_{p}^s$
the result only covers the degenerate constant case $p\geq2$. 
\end{remark}

\begin{theorem}\label{EllipticDirichletRegThm2}
    Let $u$ be the solution of the Dirichlet problem \eqref{EllipticProbEq}. Suppose $g(x,y,r)$ is uniformly bounded and positive as in \eqref{tildegcond}.
\begin{enumerate}[label=(\alph*)]
    \item  Let $f\in L^q_{loc}(\Omega)$ for some $q>\frac{2d}{d+2}$. Then, there exists a positive $0<\delta<1-s$ depending on $d$, $s$, $g_*$, $g^*$, $q$ independent of the solution $u$, such that $u\in W^{s+\delta,2+\delta}_{loc}(\Omega)$. 
    
    \item Suppose further that $f\in L^\infty(\Omega)$ and $g(x,y,r)=g(y,x,r)$, i.e. $g$ has symmetric anisotropy, the solution $u$ of \eqref{EllipticProbEq} is also globally H\"older continuous and satisfies \eqref{uRegBonderSalortVivas} for some $0<\alpha<1$ depending on $d$, $p$, $s$, $\gamma_*$ and $\gamma^*$.
\end{enumerate}
\end{theorem}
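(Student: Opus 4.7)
The key observation underlying both parts is that the two-sided bound \eqref{tildegcond} makes $\mathcal{L}_g^s$ a nonlocal divergence-form operator with bounded measurable coefficients, equivalent in energy to the fractional Laplacian. Concretely, setting $a(x,y) := g(x,y,|\delta^s u(x,y)|) \in [\gamma_*, \gamma^*]$, the variational equation \eqref{EllipticProbEq} reads
\[
\int_{\mathbb{R}^d}\!\!\int_{\mathbb{R}^d} a(x,y)\, \delta^s u(x,y)\, \delta^s v(x,y)\, \frac{dx\,dy}{|x-y|^d} \;=\; \langle f,v \rangle \qquad \forall v \in H^s_0(\Omega),
\]
and by the remark following Corollary \ref{CompactnessTheoremRellichLp} the space $W^{s,G_{:}}_0(\Omega)$ coincides with $H^s_0(\Omega)$ with equivalent norms, so $u\in H^s_0(\Omega)$, and $\|u\|_{L^\infty(\Omega)}\leq C$ by Theorem \ref{UnifBddThmMusielak} whenever $f\in L^\infty(\Omega)$.

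For part (a), I would first derive a Caccioppoli-type inequality on concentric balls $B_r\Subset B_R\Subset\Omega$, controlling the localized Gagliardo seminorm $[u\eta]_{s,2}^2$ for a suitable cutoff $\eta$ by the local $L^2$-norm of $u$, a tail contribution, and the pairing with $f$ — the hypothesis $q>2d/(d+2)$ ensuring, via fractional Sobolev embedding, that $f$ acts on $H^s$-functions. The second step is to invoke the nonlocal self-improving lemma of Gehring type (as in the works of Kuusi-Mingione-Sire and Cozzi) for the finite-difference density $|\delta^s u|^2/|x-y|^d$ weighted by $a(x,y)$: two-sided ellipticity delivers a reverse Hölder inequality on Sobolev-Gagliardo scales, producing a small $\delta>0$, depending only on $d, s, g_*, g^*, q$, such that $\delta^s u\in L^{2+\delta}_{loc}$. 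A standard Besov-Sobolev embedding then yields $u\in W^{s+\delta,2+\delta}_{loc}(\Omega)$, with the restriction $\delta<1-s$ imposed by the scaling of the interpolation.

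For part (b), I would linearise by freezing the coefficient: the symmetric anisotropy $g(x,y,r) = g(y,x,r)$ together with \eqref{tildegcond} implies that the kernel $K(x,y) := g(x,y,|\delta^s u(x,y)|)\,|x-y|^{-d-2s}$ is symmetric and pointwise comparable to $|x-y|^{-d-2s}$, so $u$ solves a symmetric linear integro-differential equation with bounded measurable kernel and bounded right-hand side $f\in L^\infty(\Omega)$. Interior Hölder regularity $u\in C^\beta_{loc}(\Omega)$ then follows from the De Giorgi-Nash-Moser theory for nonlocal operators with measurable kernels (Kassmann; Di Castro-Kuusi-Palatucci; Silvestre), with $\beta$ and the constant depending only on $d, s, \gamma_*, \gamma^*$ and $\|f\|_{L^\infty(\Omega)}$. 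For the global bound up to $\partial\Omega$, I would combine this interior estimate with a comparison against the standard barrier $x\mapsto\mathrm{dist}(x,\partial\Omega)^s$, extending the Ros-Oton-Serra type boundary regularity from the fractional Laplacian to the present bounded-kernel setting; the $L^\infty$-bound of Theorem \ref{UnifBddThmMusielak} is what controls the tail of $u$ in $\mathbb{R}^d\setminus\Omega$ that appears in the barrier comparison. The hardest step is precisely this boundary Hölder estimate: interior estimates for bounded-measurable nonlocal kernels are by now routine, but the global regularity requires either a careful barrier analysis that exploits the two-sided ellipticity $\gamma_*\leq g\leq \gamma^*$, or a perturbation argument against $(-\Delta)^s$ combined with the known boundary theory for the fractional Laplacian, and it is here that the symmetry assumption on $g$ is essential.
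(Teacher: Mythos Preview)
Your proposal is correct and follows the paper's strategy exactly: freeze the coefficient $a(x,y)=g(x,y,|\delta^s u(x,y)|)\in[\gamma_*,\gamma^*]$ so that $u$ becomes a solution of a \emph{linear} nonlocal equation with bounded measurable (and, in (b), symmetric) kernel, then invoke Kuusi--Mingione--Sire for part (a) and the H\"older theory for such kernels for part (b). The only difference is presentational: for (b) the paper simply cites Theorem~\ref{HolderRegMusielak}(b) with $p=2$ and $K(x,y)=g(x,y,|\delta^s u(x,y)|)$, rather than sketching the interior De Giorgi--Nash--Moser estimate and the boundary barrier argument you outline.
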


\begin{remark}
Part (a) of this result is obtained by applying the result of Theorem 1.1 of \cite{KuusiMingioneSire2015APDE-RegularityNonlocal} by replacing the kernel $K(x,y)$ with the bounded kernel $g(x,y,|\delta_s u(x,y)|)$ satisfying \eqref{tildegcond}, being $u$ the solution of the nonlinear Dirichlet problem \eqref{EllipticProbEq}. 

Part (b) of this result in the special case when $g(x,y,r)$ is uniformly bounded, in the sense that $0<\gamma_*\leq g(x,y,r)\leq \gamma^*$, is a simple corollary of Theorem \ref{HolderRegMusielak} in the case $p=2$, since $|\delta_su|$ is symmetric and we can consider $g(x,y,|\delta_s u(x,y)|)=K(x,y)$ as a function of $x$ and $y$ for the regularity estimate.
\end{remark}

\section{Quasilinear Fractional Obstacle Problems}

Exploring the order properties of the fractional generalised Orlicz spaces and showing the T-monotonicity property in this large class of nonlocal operators, we are able to extend well-known properties to the fractional framework: comparison of solution with respect to the data and the Lewy-Stampacchia inequalities for obstacle problems.

\subsection{T-monotonicity and Comparison Properties}

We start by showing that the quasilinear fractional operator $\mathcal{L}_g^s$ is strictly T-monotone in $W^{s,G_{:}}_0(\Omega)$, i.e. \[\langle\mathcal{L}_g^su-\mathcal{L}_g^sv,(u-v)^+\rangle>0\quad\forall u\neq v.\] Here, we use the standard notation for the positive and negative parts of $v$ 
\[v^+\equiv v\vee0\quad\text{ and }\quad v^-\equiv-v\vee0=-(v\wedge0),\] and we recall the Jordan decomposition of $v$ given by \[v=v^+-v^-\quad\text{ and }\quad |v|\equiv v\vee(-v)=v^++v^-\] and the useful identities \[u\vee v=u+(v-u)^+=v+(u-v)^+,\]\[u\wedge v=u-(u-v)^+=v-(v-u)^+.\]

\begin{theorem}\label{LgTMonotone}
The operator $\mathcal{L}_g^s$ is strictly T-monotone in $W^{s,G_{:}}_0(\Omega)$. 
\end{theorem}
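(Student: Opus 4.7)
The plan is to establish the T-monotonicity by a pointwise analysis of the integrand defining the duality pairing $\langle \mathcal{L}_g^s u - \mathcal{L}_g^s v, (u-v)^+ \rangle$, and then extract strict positivity from the nondegeneracy of $(u-v)^+$.

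First, I would observe that the structural condition \eqref{gGrowthCond} forces $r \mapsto \bar{g}(x,y,r) = rg(x,y,r)$ to be strictly increasing on $\mathbb{R}^+$ for a.e.\ $(x,y)$, since the lower bound in \eqref{gGrowthCond} gives $\bar{g}'(x,y,r)/\bar{g}(x,y,r) \geq g_*/r > 0$. By odd extension, the map $\phi_{x,y}(t) := g(x,y,|t|)\,t$ is then strictly monotone on all of $\mathbb{R}$, so that $[\phi_{x,y}(a) - \phi_{x,y}(b)](a-b) \geq 0$ for every $a, b \in \mathbb{R}$, with strict inequality iff $a \neq b$.

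Next, setting $w = u - v$, I would use the elementary pointwise inequality
\[
(w(x) - w(y))(w^+(x) - w^+(y)) \geq (w^+(x) - w^+(y))^2 \geq 0,
\]
(a routine case analysis in the signs of $w(x), w(y)$, entirely analogous to \eqref{TruncationEst}). Dividing by $|x-y|^{2s}$, this says that $\delta^s w(x,y)$ and $\delta^s w^+(x,y)$ always have the same sign or one of them vanishes. Taking $a = \delta^s u(x,y)$ and $b = \delta^s v(x,y)$, so that $a - b = \delta^s w(x,y)$, we obtain
\[
\bigl[\phi_{x,y}(\delta^s u) - \phi_{x,y}(\delta^s v)\bigr]\, \delta^s w^+(x,y) \geq 0 \quad \text{a.e.\ in } \mathbb{R}^d \times \mathbb{R}^d,
\]
and integrating against the kernel $|x-y|^{-d}\,dx\,dy$ yields $\langle \mathcal{L}_g^s u - \mathcal{L}_g^s v, (u-v)^+ \rangle \geq 0$, i.e.\ T-monotonicity.

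For the strict statement, I would assume $(u-v)^+ \not\equiv 0$ in $W^{s,G_{:}}_0(\Omega)$. Since $w^+$ vanishes outside the bounded set $\Omega$ and is strictly positive on a subset of $\Omega$ of positive measure, $w^+$ is nonconstant on $\mathbb{R}^d$, so the set $\{(x,y) \in \mathbb{R}^{2d} : w^+(x) \neq w^+(y)\}$ has positive Lebesgue measure. On this set $\delta^s w^+ \neq 0$ and necessarily $w(x) \neq w(y)$ (equal $w$-values force equal $w^+$-values), hence $\delta^s u \neq \delta^s v$; the strict monotonicity of $\phi_{x,y}$ combined with the same-sign property above makes the integrand strictly positive there, and the full integral is strictly positive.

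The only real technical point is confirming that the strict monotonicity of $\phi_{x,y}$ transfers correctly through the sign comparison involving $w^+$ to give strict positivity on a set of positive measure; the remainder is a direct computation, and no approximation or regularity argument is needed since all integrals are well defined on $W^{s,G_{:}}_0(\Omega)$ by the functional framework of Section \ref{sec:Musielak}.
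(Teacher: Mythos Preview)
Your argument is correct and follows essentially the same route as the paper: pointwise nonnegativity of the integrand via the monotonicity of $r\mapsto r\,g(x,y,r)$ together with the elementary inequality $(w(x)-w(y))(w^+(x)-w^+(y))\geq (w^+(x)-w^+(y))^2$, and then strict positivity from the nonconstancy of $w^+$. The only cosmetic difference is that the paper linearises $\phi_{x,y}(\delta^s u)-\phi_{x,y}(\delta^s v)$ via the mean-value integral $J(x,y)=\int_0^1\bigl[g_{:}(|\theta_r|)+|\theta_r|\,g'_{:}(|\theta_r|)\bigr]\,dr>0$ to extract a strictly positive kernel, whereas you invoke the strict monotonicity of $\phi_{x,y}$ directly; both rest on the same structural hypothesis \eqref{gGrowthCond}.
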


\begin{proof}

Setting $\theta_r(x,y)=r\delta^s u(x,y)+(1-r)\delta^s v(x,y)$ and writing $w=u-v$, we have 
\begin{align*}&\quad\langle\mathcal{L}_g^su-\mathcal{L}_g^sv,w^+\rangle\\&=\int_{\mathbb{R}^d}\int_{\mathbb{R}^d}(w^+(x)-w^+(y))\left[g_{:}\left(|\delta^s u|\right)\delta^s u-g_{:}\left(|\delta^s v|\right)\delta^s v\right]\frac{dy\,dx}{|x-y|^{d+s}}\\&=\int_{\mathbb{R}^d}\int_{\mathbb{R}^d}(w^+(x)-w^+(y))\left[\int_0^1g_{:}(|\theta_r|)\,dr+\int_0^1|\theta_r|g'_{:}(|\theta_r|)\,dr\right](\delta^s u-\delta^s v)\frac{dy\,dx}{|x-y|^{d+s}}\end{align*} 
Now, by \eqref{gGrowthCond}, 
\[J(x,y)=\left[\int_0^1g_{:}(|\theta_r|)\,dr+\int_0^1|\theta_r|g'_{:}(|\theta_r|)\,dr\right]>0\] is strictly positive and bounded, so we have
\begin{align*}\langle\mathcal{L}_g^su-\mathcal{L}_g^sv,(u-v)^+\rangle&=\int_{\mathbb{R}^d}\int_{\mathbb{R}^d}J(x,y)\frac{w^+(x)-w^-(x)-w^+(y)+w^-(y)}{|x-y|^{d+2s}}(w^+(x)-w^+(y))\,dx\,dy\\&=\int_{\mathbb{R}^d}\int_{\mathbb{R}^d}J(x,y)\frac{(w^+(x)-w^+(y))^2+w^-(x)w^+(y)+w^+(x)w^-(y)}{|x-y|^{d+2s}}\,dx\,dy\\&\geq\int_{\mathbb{R}^d}\int_{\mathbb{R}^d}J(x,y)\frac{(w^+(x)-w^+(y))^2}{|x-y|^{d+2s}}\,dx\,dy>0\end{align*} if $w^+\neq0$, since $w^-(x)w^+(x)=w^-(y)w^+(y)=0$.
\end{proof}

\begin{remark}
With exactly the same argument by replacing $w^+$ with $w=u-v$, the operator $\mathcal{L}_g^s$ is strictly monotone. This also follows directly from the fact that \eqref{gGrowthCond} implies the strict monotonicity of $g$ (see for instance, page 2 of \cite{ChallalLyaghfouri2009CPAAHolderContinuityALaplaceEq}): 
for all $\xi,\zeta\in\mathbb{R}$ such that $\xi\neq\zeta$, \begin{equation}\label{gmonotone}(g_{:}(|\xi|)\xi-g_{:}(|\zeta|)\zeta)\cdot(\xi-\zeta)>0\quad \text{ a.e. }x,y\in\mathbb{R}^d.\end{equation} 

The strict monotonicity immediately implies the uniqueness of the solution in Proposition \ref{EllipticDirichletExistThm}.

\end{remark}

\begin{remark}\label{LpCoercive}
    In the particular case when $g(x,y,r)=|r|^{p-2}K(x,y)$ as in the fractional $p$-Laplacian \eqref{LpLap}, with $1<p<\infty$ and $K$ satisfies \eqref{KAssump}, the operator $\mathcal{L}_p^s$ is strictly coercive, in the sense that 
\begin{equation}\label{LgCoerciveEq}\langle \mathcal{L}_p^su- \mathcal{L}_p^sv,u-v\rangle
\geq 
\begin{cases} 2^{1-p}k_*[u-v]_{W^{s,p}_0(\Omega)}^p&\text{ if }p\geq2,\\
(p-1)2^{\frac{p^2-4p+2}{p}}k_*\dfrac{[u-v]_{W^{s,p}_0(\Omega)}^2}{\left([u]_{W^{s,p}_0(\Omega)}+[v]_{W^{s,p}_0(\Omega)}\right)^{2-p}}&\text{ if }1<p<2,\end{cases}\end{equation} where the seminorm of $W^{s,p}_0(\Omega)$ is given by 
\[[u]_{W^{s,p}(\Omega)}=\left(\int_{\mathbb{R}^d}\int_{\mathbb{R}^d}\frac{|u(x)-u(y)|^p}{|x-y|^{d+sp}}dxdy\right)^\frac{1}{p},\] This is a generalisation of Proposition 2.4 of \cite{LoRod2024StabilityCoincidenceSet} to the $K$-anisotropic case. 

\end{remark}


In the Hilbertian framework, we furthermore assume that $g(x,y,r)\in[\gamma_*,\gamma^*]$ as in \eqref{tildegcond}. Then, for a.e. $x,y\in\mathbb{R}^d$, it is easy to see from the proof of Theorem \ref{LgTMonotone} that for all $\xi,\zeta\in\mathbb{R}$, \[(g(x,y,|\xi|)\xi-g(x,y,|\zeta|)\zeta)\cdot(\xi-\zeta)\geq \gamma_*g_*|\xi-\zeta|^2\] and \[|g(x,y,|\xi|)\xi-g(x,y,|\zeta|)\zeta|\leq \gamma^*g^*|\xi-\zeta|.\]

\begin{proposition}\label{LgCoercive}
     The operator $\mathcal{L}_g^s$ in $H^s_0(\Omega)$ with $g(x,y,r)\in[\gamma_*,\gamma^*]$ satisfying \eqref{tildegcond} is strictly coercive and Lipschitz continuous. 
\end{proposition}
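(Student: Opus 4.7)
The plan is to derive both properties directly from the two pointwise scalar bounds stated just before the proposition, namely
$$\bigl(g(x,y,|\xi|)\xi - g(x,y,|\zeta|)\zeta\bigr)(\xi - \zeta) \geq \gamma_* g_* |\xi - \zeta|^2, \qquad \bigl|g(x,y,|\xi|)\xi - g(x,y,|\zeta|)\zeta\bigr| \leq \gamma^* g^* |\xi - \zeta|,$$
valid for a.e.\ $x,y\in\mathbb{R}^d$ and all $\xi,\zeta\in\mathbb{R}$. These bounds, together with the definition \eqref{Lgdef} of $\mathcal{L}_g^s$, reduce everything to elementary estimates on the Gagliardo $H^s$-seminorm after inserting $\xi = \delta^s u(x,y)$ and $\zeta = \delta^s v(x,y)$ inside the double integral.

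For the strict coercivity, I would test $\mathcal{L}_g^s u - \mathcal{L}_g^s v$ against $u-v$ and apply the first pointwise inequality to obtain
$$\langle \mathcal{L}_g^s u - \mathcal{L}_g^s v,\, u - v\rangle \;\geq\; \gamma_* g_* \int_{\mathbb{R}^d}\!\!\int_{\mathbb{R}^d} \bigl|\delta^s(u-v)\bigr|^2\,\frac{dx\,dy}{|x-y|^d} \;=\; \gamma_* g_*\, [u-v]_{H^s(\mathbb{R}^d)}^2.$$
Applying Lemma \ref{Poincare} in the Hilbertian case $G(x,y,r) = \tfrac12 r^2$ (for which $W^{s,G_{:}}_0(\Omega) = H^s_0(\Omega)$), the seminorm on the right is equivalent to the full $H^s_0(\Omega)$-norm, giving the coercivity bound $\langle \mathcal{L}_g^s u - \mathcal{L}_g^s v, u-v\rangle \geq c\, \|u-v\|_{H^s_0(\Omega)}^2$ with $c>0$.

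For the Lipschitz continuity, I would test against an arbitrary $\phi \in H^s_0(\Omega)$, apply the second pointwise inequality, and then use Cauchy--Schwarz on the double integral:
$$\bigl|\langle \mathcal{L}_g^s u - \mathcal{L}_g^s v,\, \phi\rangle\bigr| \;\leq\; \gamma^* g^* \int_{\mathbb{R}^d}\!\!\int_{\mathbb{R}^d} \bigl|\delta^s(u-v)\bigr|\,\bigl|\delta^s \phi\bigr|\,\frac{dx\,dy}{|x-y|^d} \;\leq\; \gamma^* g^*\, [u-v]_{H^s(\mathbb{R}^d)}\,[\phi]_{H^s(\mathbb{R}^d)}.$$
Taking the supremum over $\phi$ with $\|\phi\|_{H^s_0(\Omega)} \leq 1$ and invoking once more the equivalence of seminorm and norm yields the desired bound $\|\mathcal{L}_g^s u - \mathcal{L}_g^s v\|_{H^{-s}(\Omega)} \leq \gamma^* g^*\, C\, \|u-v\|_{H^s_0(\Omega)}$.

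There is no serious obstacle: the proposition is essentially a corollary of the two scalar pointwise bounds above, which themselves follow from combining \eqref{gGrowthCond} with \eqref{tildegcond} — these together imply that $\tfrac{d}{dr}(r g(x,y,r)) = g(x,y,r)(1 + rg'/g)$ lies in $[\gamma_* g_*,\, \gamma^* g^*]$, making $r\mapsto r g(x,y,r)$ uniformly strongly monotone and uniformly Lipschitz in $r$. The only minor point of care is that the factor $|x-y|^{-s}$ hidden in each $\delta^s$ combines cleanly with $|x-y|^{-d}$ to produce the $H^s$-Gagliardo kernel $|x-y|^{-(d+2s)}$ in the coercivity estimate and, via Cauchy--Schwarz, two copies of $|x-y|^{-(d+2s)/2}$ in the Lipschitz estimate.
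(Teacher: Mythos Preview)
Your proposal is correct and follows essentially the same route as the paper: both proofs insert $\xi=\delta^s u$, $\zeta=\delta^s v$ into the two scalar pointwise bounds, integrate against the kernel $|x-y|^{-d}$, and use Cauchy--Schwarz for the Lipschitz part. The only cosmetic difference is that the paper directly identifies the resulting Gagliardo seminorm with $\norm{\cdot}_{H^s_0(\Omega)}$, whereas you invoke Lemma~\ref{Poincare} explicitly to pass from seminorm to norm; this is the same step in different packaging.
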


\begin{proof}
$\bar{\mathcal{L}}_g^s$ is strictly coercive for all $u,v\in H^s_0(\Omega)$ because
\begin{align*}\langle\bar{\mathcal{L}}_g^su-\bar{\mathcal{L}}_g^sv,u-v\rangle&=\int_{\mathbb{R}^d}\int_{\mathbb{R}^d}(g_{:}(|\delta^s u|)\delta^s u-g_{:}(|\delta^s v|)\delta^s v)\cdot (\delta^s u-\delta^s v)\frac{dx\,dy}{|x-y|^d}\\&\geq \gamma_*g_*\int_{\mathbb{R}^d}\int_{\mathbb{R}^d}|\delta^s u-\delta^s v|^2\frac{dx\,dy}{|x-y|^d}=\gamma_*g_*\norm{u-v}_{H^s_0(\Omega)}^2.\end{align*}

Also, $\mathcal{L}_g^s$ is Lipschitz since for all $u,v,w\in H^s_0(\Omega)$ with $\norm{w}_{H^s_0(\Omega)}=1$,
\begin{align*}|\langle \mathcal{L}_g^su-\mathcal{L}_g^sv,w\rangle|&\leq\int_{\mathbb{R}^d}\int_{\mathbb{R}^d}\left|g(x,y,|\delta^s u|)\delta^s u-g(x,y,|\delta^s v|)\delta^s v\right||\delta^s w|\frac{dx\,dy}{|x-y|^d}\\&\leq\gamma^*g^* \int_{\mathbb{R}^d}\int_{\mathbb{R}^d}\frac{|\delta^s u-\delta^s v|}{|x-y|^{\frac{d}{2}}}\frac{|w(x)-w(y)|}{|x-y|^{s+\frac{d}{2}}}\,dx\,dy\leq \gamma^*g^*\norm{u-v}_{H^s_0(\Omega)}.\end{align*}

\end{proof}

As a result, we have, in addition, the comparison property for the Dirichlet problem. 
Recall that we characterise an element $F\in[W^{-s,G_{:}^*}(\Omega)]^+$, the positive cone of the dual space of $W^{s,G_{:}}_0(\Omega)$, by
\begin{equation}\label{DualCone}F\geq0 \text{ in } W^{-s,G_{:}^*}(\Omega)\quad\quad\text{ if and only if }\quad\quad\langle F,v\rangle\geq0\quad\forall v\geq0,v\in W^{s,G_{:}}_0(\Omega).\end{equation}

\begin{proposition}\label{ComparisonPropDirichlet}
If $u,\hat{u}$ denotes the solution of \eqref{EllipticProbEq} corresponding to $F,\psi$ and $\hat{F},\hat{\psi}$ respectively, then 
\[F\geq\hat{F}\quad\text{ implies }\quad u\geq\hat{u}\quad \text{ a.e. in }\Omega.\]

\end{proposition}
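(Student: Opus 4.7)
The plan is to use strict T-monotonicity (Theorem~\ref{LgTMonotone}) as the essential tool, together with the characterisation \eqref{DualCone} of the positive cone of the dual space. Write the variational equations for $u$ and $\hat u$ from \eqref{EllipticProbEq} and subtract them to obtain
\[
\langle \mathcal{L}_g^s u - \mathcal{L}_g^s \hat u, v\rangle = \langle F - \hat F, v\rangle \quad \forall v \in W^{s,G_{:}}_0(\Omega).
\]
The natural test function is $v = (\hat u - u)^+$. Since $t \mapsto t^+$ is a Lipschitz map vanishing at $0$, and $\hat u - u \in W^{s,G_{:}}_0(\Omega)$, the Lemma on the action of Lipschitz nonlinearities (used already in Section 2.2 for $T_k$ and $P_k$) guarantees that $(\hat u - u)^+ \in W^{s,G_{:}}_0(\Omega)$, so it is admissible.

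Next, I would combine the two sides. On the right, $F - \hat F \geq 0$ in $W^{-s,G_{:}^*}(\Omega)$ and $(\hat u - u)^+ \geq 0$ in $W^{s,G_{:}}_0(\Omega)$, so \eqref{DualCone} gives
\[
\langle F - \hat F, (\hat u - u)^+\rangle \geq 0,
\]
hence
\[
\langle \mathcal{L}_g^s u - \mathcal{L}_g^s \hat u, (\hat u - u)^+\rangle \geq 0, \quad \text{i.e.,} \quad \langle \mathcal{L}_g^s \hat u - \mathcal{L}_g^s u, (\hat u - u)^+\rangle \leq 0.
\]
But Theorem~\ref{LgTMonotone} applied with the pair $(\hat u, u)$ asserts that the last bracket is \emph{strictly positive} whenever $(\hat u - u)^+ \neq 0$. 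The only way to reconcile the two is $(\hat u - u)^+ = 0$ a.e.\ in $\mathbb{R}^d$, which is exactly $u \geq \hat u$ a.e.\ in $\Omega$.

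The argument is essentially a one-line consequence of the T-monotonicity already established, so I do not expect a genuine obstacle; the only minor points to verify carefully are the membership $(\hat u - u)^+ \in W^{s,G_{:}}_0(\Omega)$ (handled by the Lipschitz-truncation lemma of Section~2.2) and the correct reading of strict T-monotonicity as giving strict inequality precisely when the positive part is not identically zero, as is clear from the last displayed chain in the proof of Theorem~\ref{LgTMonotone}.
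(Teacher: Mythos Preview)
Your proof is correct and is essentially the same as the paper's: both test the difference of the two variational equations with $(\hat u - u)^+$ and conclude via strict T-monotonicity. The paper phrases the choice of test functions as $v=u\vee\hat{u}$ and $\hat v=u\wedge\hat{u}$ (a habit carried over from the variational-inequality setting), but for equations this reduces exactly to your direct testing with $(\hat u - u)^+$.
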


\begin{proof}
Taking $v=u\vee\hat{u}$ for the original problem and $\hat{v}=u\wedge\hat{u}$ for the other problem and adding, we have 
\[\langle \mathcal{L}_g^s\hat{u}-\mathcal{L}_g^su,(\hat{u}-u)^+\rangle+ \langle F-\hat{F}, (\hat{u}-u)^+ \rangle =0.\] Since $F\geq\hat{F}$, the result follows by the strict T-monotonicity of $\mathcal{L}_g^s$.
\end{proof}

\begin{remark}
    This property of $\mathcal{L}_g^s$ extends and implies Lemma 9 of \cite{LindgrenLindqvist2014CVPDEFractionalEigenvalues} for the fractional $p$-Laplacian, as well as the fractional $g$-Laplacian in Proposition C.4 of \cite{BonderSalortVivas2020InteriorGlobalRegFracGLap} and Theorem 1.1 of \cite{MolinaSalortVivas2021NonAFracGLapProperties}.
\end{remark}

\begin{remark}
    This comparison property includes the result in Theorem 5.2 of \cite{BahrouniJMAA2018ComparisonPrincipleFracPxLap} in the case of a single non-homogeneous exponent $p(x,y)$ and it extends easily the validity of the sub-supersolutions principles to this more general class of operators $\mathcal{L}_g^s$.
\end{remark}
\subsection{Lewy-Stampacchia Inequalities for Obstacle Problems}

Next, we extend the comparison results for the obstacle problems
\begin{equation}\label{LgObsProb}u\in \mathbb{K}^s:\quad\langle\mathcal{L}_g^su-F,v-u\rangle\geq0\quad\forall v\in \mathbb{K}^s,\end{equation} for $F\in W^{-s,G_{:}^*}(\Omega)$ and measurable obstacle functions $\psi,\varphi\in W^{s,G_{:}}(\mathbb{R}^d)$ such that the closed convex sets $\mathbb{K}^s=\mathbb{K}^s_1$ or $\mathbb{K}^s_2$ defined by 
\[\mathbb{K}_1^s=\{v\in W^{s,G_{:}}_0(\Omega):v\geq\psi \text{ a.e. in }\Omega\}\neq\emptyset,\] 
\[\mathbb{K}_2^s=\{v\in W^{s,G_{:}}_0(\Omega):\psi\leq v\leq\varphi \text{ a.e. in }\Omega\}\neq\emptyset.\]

\begin{theorem}\label{EllipticObsProbThm}
The one or two obstacles problem \eqref{LgObsProb} has a unique solution $u=u(F,\psi,\varphi)\in \mathbb{K}^s$, respectively for $\mathbb{K}^s=\mathbb{K}^s_1$ or $\mathbb{K}^s_2$,and is equivalent to minimising in $\mathbb{K}^s$ the functional $\mathcal{G}_s$ defined in \eqref{LgFunc}.

Moreover, if $\hat{u}$ denotes the solution corresponding to $\hat{F}$, $\hat{\psi}$ or to $\hat{F}$, $\hat{\psi}$ and $\hat{\varphi}$, respectively, then 
\[F\geq\hat{F},\quad\psi\geq\hat{\psi}\quad\text{ implies }\quad u\geq\hat{u}\text{ a.e. in }\Omega,\] 
or
\[F\geq\hat{F},\quad\varphi\geq\hat{\varphi},\quad\psi\geq\hat{\psi}\quad\text{ implies }\quad u\geq\hat{u}\text{ a.e. in }\Omega,\] 
and if $F=\hat{F}$, the following $L^\infty$ estimates hold: \begin{equation}\label{LipContDepResult1}\norm{u-\hat{u}}_{L^\infty(\Omega)}\leq\|\psi-\hat{\psi}\|_{L^\infty(\Omega)}.\end{equation}
\begin{equation}\label{LipContDepResult2}\norm{u-\hat{u}}_{L^\infty(\Omega)}\leq\|\psi-\hat{\psi}\|_{L^\infty(\Omega)}\vee\norm{\varphi-\hat{\varphi}}_{L^\infty(\Omega)}.\end{equation}
\end{theorem}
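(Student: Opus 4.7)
The plan is to combine standard convex analysis for existence and equivalence to minimisation with the strict T-monotonicity already proved in Theorem \ref{LgTMonotone} for the comparison and $L^\infty$ bounds.

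For existence and uniqueness, I would argue as in Proposition \ref{EllipticDirichletExistThm}: the functional $\mathcal{G}_s = \Gamma_{s,G_{:}} - \langle F,\cdot\rangle$ is strictly convex on $W^{s,G_{:}}_0(\Omega)$ (by strict convexity of $G_{:}(x,y,\cdot)$ from \eqref{gGrowthCond}, together with the zero trace condition), coercive (by Lemma \ref{Poincare} and the lower bound in \eqref{GGrowthCond}), and weakly lower semicontinuous. Since $\mathbb{K}^s_1$ and $\mathbb{K}^s_2$ are nonempty, closed and convex subsets of the reflexive Banach space $W^{s,G_{:}}_0(\Omega)$, a unique minimiser $u\in\mathbb{K}^s$ exists, and the VI \eqref{LgObsProb} is its classical first-order Euler characterisation (the Gâteaux derivative of $\Gamma_{s,G_{:}}$ at $u$ being $\mathcal{L}_g^s u$). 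Uniqueness also follows directly from the strict monotonicity contained in Theorem \ref{LgTMonotone}.

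For the comparison property, I would plug $v = u\vee\hat u$ into the VI for $u$ and $\hat v = u\wedge\hat u$ into the VI for $\hat u$. These are admissible: in the one obstacle case, $v\ge u\ge\psi$, and $u\wedge\hat u\ge\hat\psi$ since $u\ge\psi\ge\hat\psi$ and $\hat u\ge\hat\psi$; in the two obstacle case, additionally $u\vee\hat u\le\varphi\vee\hat\varphi=\varphi$ and $u\wedge\hat u\le\hat u\le\hat\varphi$. Both lie in $W^{s,G_{:}}_0(\Omega)$ by the lattice stability of the space (as in the lemma preceding Theorem \ref{UnifBddThmMusielak}, since $a\vee b = a + (b-a)^+$ and $r\mapsto r^+$ is $1$-Lipschitz with $0^+=0$). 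With $v-u = (\hat u-u)^+$ and $\hat v-\hat u = -(\hat u-u)^+$, summing the two inequalities yields
\begin{equation*}
\langle \mathcal{L}_g^s\hat u - \mathcal{L}_g^s u,(\hat u-u)^+\rangle \le \langle \hat F - F,(\hat u-u)^+\rangle \le 0,
\end{equation*}
and strict T-monotonicity (Theorem \ref{LgTMonotone}) forces $(\hat u-u)^+=0$, i.e.\ $u\ge\hat u$.

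For the $L^\infty$ stability, with $F=\hat F$, I take $m=\|\psi-\hat\psi\|_{L^\infty(\Omega)}$ in the one obstacle case and $m=\|\psi-\hat\psi\|_{L^\infty(\Omega)}\vee\|\varphi-\hat\varphi\|_{L^\infty(\Omega)}$ in the two obstacle case, and test with $v = u\wedge(\hat u+m)$ in the VI for $u$ and $\hat v = \hat u\vee(u-m)$ in the VI for $\hat u$. The choice of $m$ is dictated by admissibility: $v\le u\le\varphi$ is automatic and $v\ge\psi$ because $\hat u+m\ge\hat\psi+m\ge\psi$; symmetrically $\hat v\ge\hat u\ge\hat\psi$ is automatic and $\hat v\le\hat\varphi$ because $u-m\le\varphi-m\le\hat\varphi$. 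Summing the two inequalities with $v-u=-(u-\hat u-m)^+$ and $\hat v-\hat u=(u-\hat u-m)^+$ gives
\begin{equation*}
\langle \mathcal{L}_g^s\hat u - \mathcal{L}_g^s u,(u-\hat u-m)^+\rangle \ge 0.
\end{equation*}
The main technical point, and the place where real work is needed, is a shifted version of Theorem \ref{LgTMonotone}: retracing its proof with $w=u-\hat u$ and test function $(w-m)^+$, the pointwise inequality
\begin{equation*}
(w(x)-w(y))\bigl((w(x)-m)^+-(w(y)-m)^+\bigr) \ge \bigl((w-m)^+(x)-(w-m)^+(y)\bigr)^2
\end{equation*}
yields $\langle \mathcal{L}_g^s u - \mathcal{L}_g^s\hat u,(u-\hat u-m)^+\rangle \ge 0$, with strict inequality whenever $(u-\hat u-m)^+\not\equiv 0$ (noting that this function lies in $W^{s,G_{:}}_0(\Omega)$ since it vanishes outside $\Omega$ and is $1$-Lipschitz in $w$). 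Comparing with the previous display forces $u-\hat u\le m$ a.e., and swapping the roles of $u$ and $\hat u$ gives the reverse bound. The principal obstacle throughout is this last step: both the tight admissibility of the lattice-shifted test functions (only simultaneously compatible with both constraints for the asserted choice of $m$) and the shifted T-monotonicity, which is a genuine mild extension of Theorem \ref{LgTMonotone} rather than a direct application of it.
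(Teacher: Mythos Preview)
Your proposal is correct and follows essentially the same approach as the paper: existence via convex analysis, comparison via the test functions $u\vee\hat u$ and $u\wedge\hat u$ combined with strict T-monotonicity, and the $L^\infty$ estimate via the shifted test functions $u\pm(\,\cdot\,-m)^+$. The paper's proof is terser---it simply writes ``the argument is similar'' for the $L^\infty$ step and gives $w=(\hat u-u-m)^+$ with $v=u+w$, $\hat v=\hat u-w$---whereas you spell out explicitly the shifted T-monotonicity needed there; but this is the same argument with the roles of $u$ and $\hat u$ interchanged, not a different route.
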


\begin{proof}
The comparison property is once again standard and follows from the T-monotonicity of $\mathcal{L}_g^s$ as given in Theorem \ref{LgTMonotone}. Indeed, in both one or two obstacles, taking $v=u\vee\hat{u}\in \mathbb{K}^s$ in the problem \eqref{LgObsProb} for 
$u$ and $\hat{v}=u\wedge\hat{u}\in \hat{\mathbb{K}}^s$ in the problem \eqref{LgObsProb} for $\hat{u}$, by adding, we have 
\[\langle \mathcal{L}_g^s\hat{u}-\mathcal{L}_g^su,(\hat{u}-u)^+\rangle+ \langle F-\hat{F}, (\hat{u}-u)^+ \rangle\leq0.\] Since $F\geq\hat{F}$ and $\mathcal{L}_g^s$ is strictly T-monotone, $(\hat{u}-u)^+=0$, i.e. $u\geq\hat{u}$.

For the $L^\infty$-continuous dependence, the argument is similar, by taking, respectively, for the one or for the two obstacles problem $v=u+w\in \mathbb{K}^s$ and  $\hat{v}=\hat{u}-w\in \hat{\mathbb{K}}^s$ with $w=\left(\hat{u}-u-\|\psi-\hat{\psi}\|_{L^\infty(\Omega)}\right)^+$ or $w=\left(\hat{u}-u-\|\psi-\hat{\psi}\|_{L^\infty(\Omega)}\vee\norm{\varphi-\hat{\varphi}}_{L^\infty(\Omega)}\right)^+$.

The existence and uniqueness of the solution follow from well known results of convex analysis, since the functional $\mathcal{G}_s$ is strictly convex, lower semi-continuous and coercive, and $\mathbb{K}^s$ is a nonempty, closed convex set in both cases.

\end{proof}

Next, recall that the order dual of the space $W^{s,G_{:}}_0(\Omega)$, denoted by $W^{-s,G_{:}^*}_\prec(\Omega)$, is the space of finite energy measures \begin{equation}\label{OrderDualDef}W^{-s,G_{:}^*}_\prec(\Omega)=[W^{-s,G_{:}^*}(\Omega)]^+-[W^{-s,G_{:}^*}(\Omega)]^+,\end{equation} defined with the norm of $W^{-s,G_{:}^*}(\Omega)$, where $[W^{-s,G_{:}^*}(\Omega)]^+$ is the cone of positive finite energy measures in $W^{-s,G_{:}^*}(\Omega)$, as given in \eqref{DualCone}. 
Then, we have the following Lewy-Stampacchia inequalities.

\begin{theorem}\label{LewyStampacchia1+2}
Assume, in addition, that for the one or the two obstacles problem, respectively,
\[F,(\mathcal{L}_g^s\psi-F)^+\in W^{-s,G_{:}^*}_\prec(\Omega),\]
or
\[F,(\mathcal{L}_g^s\psi-F)^+,(\mathcal{L}_g^s\varphi-F)^+\in W^{-s,G_{:}^*}_\prec(\Omega).\] Then, the solution $u$ of the one or the two obstacles problem \eqref{LgObsProb}, satisfies in $W^{-s,G_{:}^*}(\Omega)$
\begin{equation}\label{ObsPbH-sEqLewyStamp}F\leq\mathcal{L}_g^su\leq F\vee\mathcal{L}_g^s\psi,\end{equation}
or
\begin{equation}\label{ObsPbH-sEqLewyStamp2}F\wedge\mathcal{L}_g^s\varphi\leq \mathcal{L}_g^su\leq F\vee\mathcal{L}_g^s\psi,\end{equation}
respectively. Consequently, in both cases $\mathcal{L}_g^su\in W^{-s,G_{:}^*}_\prec(\Omega)$.
\end{theorem}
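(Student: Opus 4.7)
The lower inequality $\mathcal{L}_g^s u \geq F$ in the one-obstacle case is immediate from testing \eqref{LgObsProb} with $v = u + \phi \in \mathbb{K}_1^s$ for any $\phi \in W^{s,G_{:}}_0(\Omega)$ with $\phi \geq 0$. For the two-obstacle case, the lower bound $F \wedge \mathcal{L}_g^s \varphi \leq \mathcal{L}_g^s u$ is obtained by the mirror argument to the upper bound below, with $\varphi$ playing the role of an upper obstacle and invoking the assumption on $(\mathcal{L}_g^s \varphi - F)^+$. The essential content of the theorem is therefore the upper bound $\mathcal{L}_g^s u \leq \bar F := F + (\mathcal{L}_g^s \psi - F)^+ = F \vee \mathcal{L}_g^s \psi$.

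My plan is to introduce the Dirichlet auxiliary $z \in W^{s,G_{:}}_0(\Omega)$ as the unique variational solution of $\mathcal{L}_g^s z = \bar F$ (Proposition \ref{EllipticDirichletExistThm}, admissible since $\bar F \in W^{-s,G_{:}^*}_\prec(\Omega)$). First I would prove $z \geq \psi$ a.e.\ in $\Omega$, so that $z \in \mathbb{K}_1^s$ (respectively $z \wedge \varphi \in \mathbb{K}_2^s$). Since $\psi \leq 0$ in $\Omega^c$, $h := (\psi - z)^+ \in W^{s,G_{:}}_0(\Omega)$ is admissible as a test function; interpreting $\langle \mathcal{L}_g^s \psi, h\rangle$ via the integral representation \eqref{Lgdef}, strict T-monotonicity (Theorem \ref{LgTMonotone}) gives $\langle \mathcal{L}_g^s \psi - \mathcal{L}_g^s z, h\rangle \geq 0$, strict unless $h \equiv 0$. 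Substituting $\mathcal{L}_g^s z = \bar F$ yields $\langle \mathcal{L}_g^s \psi - F, h\rangle \geq \langle (\mathcal{L}_g^s \psi - F)^+, h\rangle$, which together with the opposite lattice inequality $\langle \mathcal{L}_g^s \psi - F, h\rangle \leq \langle (\mathcal{L}_g^s \psi - F)^+, h\rangle$, valid for $h \geq 0$ by the Banach-lattice structure of $W^{-s,G_{:}^*}_\prec$, forces $h \equiv 0$. A parallel test with $v = u \wedge z \in \mathbb{K}^s$ in \eqref{LgObsProb} and the equation for $z$ tested against $(u - z)^+$ gives $\langle \mathcal{L}_g^s u - \mathcal{L}_g^s z, (u - z)^+\rangle \leq 0$, and strict T-monotonicity then yields $u \leq z$.

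The main obstacle will be the final step: promoting the pointwise comparison $u \leq z$ into the operator inequality $\mathcal{L}_g^s u \leq \bar F = \mathcal{L}_g^s z$ in $W^{-s,G_{:}^*}(\Omega)$, since T-monotonicity is \emph{not} an order-preservation property on operator outputs. My strategy is to exploit that $u$ lies in $[\psi, z] \cap W^{s,G_{:}}_0(\Omega)$ and minimises $\mathcal{G}_s$ on the larger set $\mathbb{K}_1^s$, hence, by strict convexity, it is also the unique minimiser of $\mathcal{G}_s$ over the bilateral convex set $[\psi, z] \cap W^{s,G_{:}}_0(\Omega)$. The KKT decomposition of this bilateral obstacle problem gives $\mathcal{L}_g^s u - F = \mu_\psi - \lambda_z$ with positive multipliers $\mu_\psi, \lambda_z \in [W^{-s,G_{:}^*}(\Omega)]^+$ satisfying the complementarity relations $\langle \mu_\psi, u - \psi\rangle = \langle \lambda_z, z - u\rangle = 0$; combined with $\mathcal{L}_g^s z - F = (\mathcal{L}_g^s \psi - F)^+$ and a penalisation of the bilateral problem along the lines of \cite{ServadeiValdinoci2013RMILewyStampacchia,GMosconi}, this yields $\mu_\psi - \lambda_z \leq (\mathcal{L}_g^s \psi - F)^+$ in $W^{-s,G_{:}^*}_\prec(\Omega)$, i.e.\ the desired upper bound. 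The two-obstacle upper inequality follows identically; the two-obstacle lower inequality is the symmetric dual. The assertion $\mathcal{L}_g^s u \in W^{-s,G_{:}^*}_\prec(\Omega)$ is then automatic from the sandwich \eqref{ObsPbH-sEqLewyStamp}--\eqref{ObsPbH-sEqLewyStamp2} and the Banach-lattice structure of the order dual.
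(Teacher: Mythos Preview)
Your approach differs substantially from the paper's, which is a two-line invocation of abstract results: strict T-monotonicity of $\mathcal{L}_g^s$ (Theorem~\ref{LgTMonotone}) places the operator squarely in the framework of Mosco \cite[Theorem~2.4.1]{Mosco1976Lectures} for one obstacle and Rodrigues--Teymurazyan \cite[Theorem~4.2]{RodriguesTeymurazyan} for two obstacles, and those theorems deliver \eqref{ObsPbH-sEqLewyStamp}--\eqref{ObsPbH-sEqLewyStamp2} directly.

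Your constructive route through the auxiliary solution $z$ of $\mathcal{L}_g^s z=\bar F$ is sound up to the comparison $u\le z$, but the final step is where the argument stalls. You correctly identify that $u\le z$ does \emph{not} yield $\mathcal{L}_g^s u\le \mathcal{L}_g^s z$; your proposed fix, however, is not self-contained. The ``KKT decomposition'' $\mathcal{L}_g^s u-F=\mu_\psi-\lambda_z$ with nonnegative multipliers in $[W^{-s,G_{:}^*}(\Omega)]^+$ is not an a priori fact for bilateral problems in this generality---its very existence is essentially equivalent to the Lewy--Stampacchia inequalities you are trying to prove. And ``a penalisation of the bilateral problem along the lines of \cite{ServadeiValdinoci2013RMILewyStampacchia,GMosconi}'' is a citation, not an argument; moreover, \cite{GMosconi} is precisely one of the abstract frameworks the paper itself invokes (see the remark following the theorem). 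So after the detour through $z$ and the bilateral reformulation, you land back on the same abstract machinery the paper cites, having gained nothing.

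If you want a genuinely direct proof avoiding the black-box citations, the penalisation should be applied to the \emph{original} one-obstacle problem with data $f+\zeta$, $\zeta=(\mathcal{L}_g^s\psi-F)^+$, exactly as the paper does in Section~4 (Theorem~\ref{LewyStampacchiaNonlin}) for $L^{\hat G^*_\cdot}$ data; carrying that argument through for order-dual data $\zeta\in W^{-s,G^*_{:}}_\prec(\Omega)$ is the actual work, and your auxiliary $z$ plays no role in it.
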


\begin{proof}
    Since the operator $\mathcal{L}_g^s$ is strictly T-monotone, we can apply the abstract results of \cite[Theorem 2.4.1]{Mosco1976Lectures} and \cite[Theorem 4.2]{RodriguesTeymurazyan} for the one-obstacle and two-obstacles problems respectively. 

    Finally, the regularity of $\mathcal{L}_g^su$ follows from the fact that intervals are closed in order duals.
\end{proof}
\begin{remark}
    In fact, the results in Theorem 2.4.1 of \cite{Mosco1976Lectures} and in and \cite[Theorem 4.2]{RodriguesTeymurazyan} do not even require $\mathcal{G}_s$ in \eqref{LgFuncPotential} to be a potential operator, but only the strict T-monotonicity and the coercivity.

    For the one obstacle problem, since the associated functional $\mathcal{G}_s$ in \eqref{LgFuncPotential} is a potential operator which is submodular, as a consequence of T-monotonicity (see also Sections 3.1, 3.2 and 4.1 of \cite{AttouchPic_NPT_AFSToul_1979}), the Lewy-Stampacchia inequalities in the order dual $W^{-s,G_{:}^*}_\prec(\Omega)$ are also a consequence of Theorem 2.4 of \cite{GMosconi}.
\end{remark}
In particular, since $L^{\hat{G}^*_\cdot}(\Omega)\subset W^{-s,G_{:}^*}_\prec(\Omega)$, we have

\begin{corollary}\label{CorallaryLewyStamp}
The solution $u$ to the one or two obstacles problem \eqref{LgObsProb}  is also such that $\mathcal{L}_g^su\in L^{\hat{G}^*_\cdot}(\Omega)=[L^{\tilde{G}_\cdot}(\Omega)]^*$, provided we assume the stronger assumption
 \[f,(\mathcal{L}_g^s\psi-f)^+\in L^{\hat{G}^*_\cdot}(\Omega),\] 
 or
  \[f,(\mathcal{L}_g^s\psi-f)^+,(\mathcal{L}_g^s\varphi-f)^+\in L^{\hat{G}^*_\cdot}(\Omega),\]
 as then the Lewy-Stampacchia inequalities hold pointwise almost everywhere
 \begin{equation}f\leq \mathcal{L}_g^su\leq f\vee\mathcal{L}_g^s\psi \quad\text{ a.e. in }\Omega.\end{equation}
 or
\begin{equation}f\wedge\mathcal{L}_g^s\varphi\leq \mathcal{L}_g^su\leq f\vee\mathcal{L}_g^s\psi \quad\text{ a.e. in }\Omega.\end{equation}
\end{corollary}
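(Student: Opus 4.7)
The plan is to upgrade the order-dual Lewy-Stampacchia bounds of Theorem \ref{LewyStampacchia1+2} to pointwise almost-everywhere bounds, by invoking the stronger integrability of the data. The key structural input is that $L^{\hat{G}^*_\cdot}(\Omega)$ embeds into $W^{-s,G_{:}^*}_\prec(\Omega)$ as a solid sublattice under the pairing \eqref{IntegralForm}, so once $\mathcal{L}_g^s u$ is sandwiched in the order-dual sense between two elements of $L^{\hat{G}^*_\cdot}(\Omega)$, it must itself lie in $L^{\hat{G}^*_\cdot}(\Omega)$ with the inequalities transferring pointwise a.e.

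First I would rewrite the majorants and minorants additively: $f\vee\mathcal{L}_g^s\psi = f + (\mathcal{L}_g^s\psi - f)^+$ and $f\wedge\mathcal{L}_g^s\varphi = f - (f - \mathcal{L}_g^s\varphi)^+$, so that the hypothesis places both bounds in $L^{\hat{G}^*_\cdot}(\Omega)$. In the one-obstacle case, setting $\mu = \mathcal{L}_g^s u - f$, Theorem \ref{LewyStampacchia1+2} yields $0 \leq \mu \leq (\mathcal{L}_g^s\psi - f)^+$ in $W^{-s,G_{:}^*}_\prec(\Omega)$; for the two-obstacle case I would put $\mu = \mathcal{L}_g^s u - f\wedge\mathcal{L}_g^s\varphi$, so that
\begin{equation*}
0 \leq \mu \leq f\vee\mathcal{L}_g^s\psi - f\wedge\mathcal{L}_g^s\varphi = (\mathcal{L}_g^s\psi - f)^+ + (f - \mathcal{L}_g^s\varphi)^+,
\end{equation*}
which again lies in $L^{\hat{G}^*_\cdot}(\Omega)$ by the assumed integrability.

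The main step, and the principal obstacle, is to show that a nonnegative $\mu \in W^{-s,G_{:}^*}(\Omega)$ dominated in the order-dual sense by some $h\in L^{\hat{G}^*_\cdot}(\Omega)$, $h\geq 0$, is itself represented by an $L^{\hat{G}^*_\cdot}(\Omega)$-function. Since $C_c^\infty(\Omega)\subset W^{s,G_{:}}_0(\Omega)$, restricting $\mu$ produces a positive distribution, hence a positive Radon measure on $\Omega$; the bound $\langle\mu,\eta\rangle \leq \int_\Omega h\,\eta\,dx$ for every nonnegative $\eta\in C_c^\infty(\Omega)$ forces $\mu$ to be absolutely continuous, with Radon-Nikodym density in $[0,h]$ a.e. Solidity of the Luxemburg norm on the positive cone of $L^{\hat{G}^*_\cdot}(\Omega)$ then places this density in $L^{\hat{G}^*_\cdot}(\Omega)$, and substituting back shows that $\mathcal{L}_g^s u \in L^{\hat{G}^*_\cdot}(\Omega)$ and satisfies the stated pointwise inequalities.

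The real technical content is concentrated in this identification step; the remaining ingredients are algebraic rewritings of Theorem \ref{LewyStampacchia1+2}. In the generalised Orlicz framework, the identification relies on the $\Delta_2$-property of $G_{:}$, which secures the density of $C_c^\infty(\Omega)$ in $L^{\tilde{G}_\cdot}(\Omega)$ together with the reflexivity of $L^{\hat{G}^*_\cdot}(\Omega)$, and on the monotonicity of the Luxemburg norm on the positive cone. With these in hand, the transition from the order dual to $L^{\hat{G}^*_\cdot}(\Omega)$ proceeds exactly as in the classical Lewy-Stampacchia setting.
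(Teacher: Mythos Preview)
Your proposal is correct and follows essentially the same route as the paper: invoke the order-dual Lewy--Stampacchia inequalities of Theorem~\ref{LewyStampacchia1+2} and then identify the sandwiched functional $\mathcal{L}_g^s u$ with an $L^{\hat{G}^*_\cdot}(\Omega)$-function. The paper's own proof is a two-line appeal to the density of $W^{s,G_{:}}_0(\Omega)$ in $L^{\tilde{G}_\cdot}(\Omega)$, whereas you spell out the identification step explicitly via the Radon measure representation and Radon--Nikodym; your version is more detailed but not a different argument.
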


\begin{proof}
    This follows simply by recalling that $W^{s,G_{:}}_0(\Omega)$ is dense in $L^{\tilde{G}_\cdot}(\Omega)$, and therefore the Lewy-Stampacchia inequalities taken in the dual space $W^{-s,G_{:}^*}(\Omega)$ reduce to integrals, as in \eqref{IntegralForm}, and it follows then that they hold also a.e. in $\Omega$.
\end{proof}


\section{Approximation by Semilinear Problems and Regularity}

The order properties implied by the strict T-monotonicity, in the case of integrable data, also allow the approximation of the solutions to the obstacle problems via bounded penalisation, which provides a direct way to prove the preceding Corollary \ref{CorallaryLewyStamp}
 and to reduce the regularity of their solutions to the regularity in the fractional Dirichlet problem.

\subsection{Approximation via Bounded Penalisation}

When the data $f$ and $(\mathcal{L}_g^s\psi-f)^+$ are integrable functions, the a.e. Lewy-Stampacchia inequalities can be obtained directly by approximation with a classical bounded penalisation of the obstacles. In the fractional $p$-Laplacian case it is even possible to estimate the error in the $W^{s,p}_0(\Omega)$-norm \cite{LoRod2024StabilityCoincidenceSet}. 
We first begin with the following auxiliary convergence result, which is well-known in other classical monotone cases, and in the framework of the operator $\mathcal{L}_g^s$ is due to \cite[Theorem 3.17]{AlbuquerqueAssisCarvalhoSalort2023-FracMusielakSobolevSpaces}.

\begin{lemma}\label{ConvgLemma}
Under assumptions \eqref{gGrowthCond}, suppose $\{u_n\}_{n\in\mathbb{N}}$ is a sequence in $W^{s,G_{:}}_0(\Omega)$. Then $u_n\to u$ strongly in $W^{s,G_{:}}_0(\Omega)$ if and only if
\begin{equation}\label{ConvgLemmaAssump}\limsup_{n\to\infty}\langle\mathcal{L}_g^su_n-\mathcal{L}_g^su,u_n-u\rangle=0.\end{equation}
\end{lemma}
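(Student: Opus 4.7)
The plan is to prove the two implications separately, noting that the forward direction is routine while the reverse is where the strict monotonicity of $\mathcal{L}_g^s$ must be exploited quantitatively.

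For the ``only if'' direction, I would use that $\mathcal{L}_g^s$ is the Fr\'echet derivative of the $C^1$ strictly convex functional $\Gamma_{s,G_{:}}$, and is therefore continuous as a map $W^{s,G_{:}}_0(\Omega)\to W^{-s,G_{:}^*}(\Omega)$. If $u_n\to u$ strongly, then $\mathcal{L}_g^s u_n\to \mathcal{L}_g^s u$ in the dual norm while $u_n-u\to 0$ in the primal norm, so the duality pairing and its $\limsup$ vanish.

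For the ``if'' direction, I would first exploit strict monotonicity: by the proof of Theorem \ref{LgTMonotone}, or equivalently the pointwise inequality \eqref{gmonotone}, the integrand
\[A_n(x,y):=\bigl[g(x,y,|\delta^s u_n|)\delta^s u_n - g(x,y,|\delta^s u|)\delta^s u\bigr](\delta^s u_n - \delta^s u)\]
is pointwise non-negative, so $\langle \mathcal{L}_g^s u_n - \mathcal{L}_g^s u,\,u_n - u\rangle\geq 0$. Combining this with the hypothesis \eqref{ConvgLemmaAssump} gives $A_n\to 0$ in $L^1$ with respect to the measure $|x-y|^{-d}\,dx\,dy$. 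Extracting a subsequence yields $A_n\to 0$ a.e., and the strictness in \eqref{gmonotone} then forces $\delta^s u_n\to\delta^s u$ a.e.\ on $\mathbb{R}^d\times\mathbb{R}^d$.

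The main obstacle is upgrading this almost everywhere convergence of the $s$-difference quotients to modular convergence $\Gamma_{s,G_{:}}(u_n-u)\to 0$, which by the $\Delta_2$-condition on $G_{:}$ coincides with the desired strong convergence $[u_n - u]_{s,G}\to 0$ in $W^{s,G_{:}}_0(\Omega)$. I would apply Vitali's convergence theorem, for which equi-integrability of $G_{:}(|\delta^s u_n - \delta^s u|)$ against $|x-y|^{-d}\,dx\,dy$ is needed. To secure this, I would first use \eqref{GGrowthCond} together with the hypothesis (tested against $u$ and rearranged via the Cauchy--Young inequality in the duality $L^{\hat{G}_\cdot}$--$L^{\hat{G}_\cdot^*}$) to show that $\Gamma_{s,G_{:}}(u_n)$ remains uniformly bounded, and then invoke the convexity of $G_{:}$ to dominate $G_{:}(|\delta^s u_n - \delta^s u|)$ by a constant multiple of $A_n(x,y)$ plus uniformly integrable tails controlled by $G_{:}(|\delta^s u_n|)$ and $G_{:}(|\delta^s u|)$. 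A standard subsequence principle then promotes the conclusion to the full sequence. This lemma in fact coincides with Theorem 3.17 of \cite{AlbuquerqueAssisCarvalhoSalort2023-FracMusielakSobolevSpaces}, whose proof may ultimately be cited.
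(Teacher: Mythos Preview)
The paper does not actually prove this lemma: it is stated and immediately attributed to \cite[Theorem~3.17]{AlbuquerqueAssisCarvalhoSalort2023-FracMusielakSobolevSpaces}, exactly the reference you identify in your final sentence. In that sense your proposal already matches the paper's ``proof'' (a citation), and your outline of the underlying argument is additional content the paper does not supply.

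Your sketch is the standard route to the $(S_+)$ property and is essentially sound, but one step is underjustified. You propose to obtain equi-integrability of $G_{:}(|\delta^s u_n-\delta^s u|)$ from the bound
\[
G_{:}(|\delta^s u_n-\delta^s u|)\leq C\,A_n(x,y)+\text{(tails in }G_{:}(|\delta^s u_n|)\text{ and }G_{:}(|\delta^s u|)\text{)},
\]
calling those tails ``uniformly integrable''. However, you have only argued that $\Gamma_{s,G_{:}}(u_n)$ is \emph{uniformly bounded}, and a uniform $L^1$-bound on $G_{:}(|\delta^s u_n|)$ does not by itself yield uniform integrability. The usual fix is to first prove modular convergence $\Gamma_{s,G_{:}}(u_n)\to\Gamma_{s,G_{:}}(u)$: from convexity one has
\[
0\leq \tilde G_{:}(\delta^s u_n)-\tilde G_{:}(\delta^s u)-g_{:}(|\delta^s u|)\delta^s u\,(\delta^s u_n-\delta^s u)\leq A_n(x,y),
\]
integrate, and use that along a subsequence $u_n\rightharpoonup u$ (boundedness plus the a.e.\ convergence you already extracted, noting $u_n=0$ off $\Omega$) to kill the linear term. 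Then $G_{:}(|\delta^s u_n|)\to G_{:}(|\delta^s u|)$ a.e.\ with converging integrals, so Scheff\'e/Brezis--Lieb gives $L^1$-convergence and hence the uniform integrability needed for Vitali. With this adjustment the argument closes; alternatively, as you say, one may simply cite the reference as the paper does.
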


Consider the penalised problem with $f$ and $\zeta=(\mathcal{L}_g^s\psi-f)^+\in L^{\hat{G}^*_\cdot}(\Omega)$,
\begin{equation}\label{PenalProb}u_\varepsilon\in W^{s,G_{:}}_0(\Omega):\quad\langle\mathcal{L}_g^su_\varepsilon,v\rangle+\int_\Omega\zeta\theta_\varepsilon(u_\varepsilon-\psi)v=\int_\Omega (f+\zeta)v, \quad\forall v\in W^{s,G_{:}}_0(\Omega),\end{equation} 
where 
$\theta_\varepsilon(t)$ is an approximation to the multi-valued Heaviside graph defined by
\[\theta_\varepsilon(t)=\theta\left(\frac{t}{\varepsilon}\right),\quad t\in\mathbb{R}\] 
for any fixed nondecreasing Lipschitz function $\theta:\mathbb{R}\to[0,1]$ satisfying \[\theta\in C^{0,1}(\mathbb{R}),\quad\theta'\geq0,\quad\theta(+\infty)=1\quad\text{ and }\theta(t)=0\text{ for }t\leq0;\]\[\exists C_\theta>0:[1-\theta(t)]t\leq C_\theta,\quad t>0.\]  

Then we have a direct proof of the Lewy-Stampacchia inequalities.

\begin{theorem}\label{LewyStampacchiaNonlin} Assume that  
\[f,(\mathcal{L}_g^s\psi-f)^+\in L^{\hat{G}^*_\cdot}(\Omega).\] 
Then, the solution $u$ of the nonlinear one obstacle problem satisfies 
\begin{equation}\label{LewyStampacchiaNonlinIneq}
f\leq \mathcal{L}_g^su\leq f\vee \mathcal{L}_g^s\psi\quad\text{ a.e. in }\Omega.
\end{equation} 
In particular, $\mathcal{L}_g^su\in L^{\hat{G}^*_\cdot}(\Omega)$. 

Furthermore, we have that the solution $u_\varepsilon$ of the penalised problem \eqref{PenalProb} converges to $u$ in the following sense:
\begin{equation}
    u_\varepsilon \to u \text{ strongly in }W^{s,G_{:}}_0(\Omega), \quad \text{ and } \quad u_\varepsilon \to u \text{ strongly in }L^{q^*}(\Omega)
\end{equation} for $q^*$ satisfying $1\leq q^* < \frac{d(g_*+1)}{d-s(g_*+1)}$.
\end{theorem}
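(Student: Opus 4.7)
The approach is to prove the Lewy–Stampacchia bounds first for the penalised approximant $u_\varepsilon$ and then pass to the limit $\varepsilon \to 0$ via Lemma \ref{ConvgLemma}. Existence and uniqueness of $u_\varepsilon$ follow from Browder–Minty applied to the monotone, coercive, hemicontinuous map $v \mapsto \mathcal{L}_g^s v + \zeta\,\theta_\varepsilon(v-\psi)$ on $W^{s,G_{:}}_0(\Omega)$, the perturbation being a bounded element of $L^{\hat{G}^*_\cdot}(\Omega) \subset W^{-s,G_{:}^*}(\Omega)$. To show $u_\varepsilon \geq \psi$ a.e., I would test with $(\psi - u_\varepsilon)^+ \in W^{s,G_{:}}_0(\Omega)$: on $\{u_\varepsilon < \psi\}$ the penalisation term vanishes because $\theta_\varepsilon(t)=0$ for $t\leq 0$, while $f + \zeta \geq \mathcal{L}_g^s\psi$ in $W^{-s,G_{:}^*}_\prec(\Omega)$ yields
\[
\langle \mathcal{L}_g^s u_\varepsilon - \mathcal{L}_g^s\psi,\,(\psi - u_\varepsilon)^+\rangle \geq 0,
\]
which by strict T-monotonicity (Theorem \ref{LgTMonotone}) forces $(\psi-u_\varepsilon)^+ = 0$. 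With $u_\varepsilon \geq \psi$ the penalised equation rewrites pointwise a.e.\ as $\mathcal{L}_g^s u_\varepsilon = f + \zeta(1-\theta_\varepsilon(u_\varepsilon-\psi))$, so the bound
\[
f \leq \mathcal{L}_g^s u_\varepsilon \leq f+\zeta = f \vee \mathcal{L}_g^s\psi \quad \text{a.e.\ in }\Omega
\]
is immediate and gives a uniform $L^{\hat{G}^*_\cdot}(\Omega)$-estimate on $\mathcal{L}_g^s u_\varepsilon$; the coercivity inherent to $\mathcal{L}_g^s$ then yields a uniform $W^{s,G_{:}}_0$-bound on $u_\varepsilon$.

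For the limit, I use the unique obstacle solution $u \in \mathbb{K}_1^s$ (Theorem \ref{EllipticObsProbThm}) directly and test the penalised equation with $v = u - u_\varepsilon$. Decomposing $u - u_\varepsilon = (u-\psi) - (u_\varepsilon - \psi)$, the nonnegativity of $\zeta(1-\theta_\varepsilon(u_\varepsilon-\psi))(u-\psi)$ together with the quantitative property $[1-\theta(t)]\,t \leq C_\theta$ of the penalisation (which rescales to $[1-\theta_\varepsilon(t)]\,t \leq \varepsilon C_\theta$ for $t \geq 0$) yields
\[
\langle \mathcal{L}_g^s u_\varepsilon - f,\,u - u_\varepsilon\rangle \geq -\varepsilon\, C_\theta\,\|\zeta\|_{L^1(\Omega)}.
\]
Adding the obstacle inequality $\langle\mathcal{L}_g^s u - f, u_\varepsilon - u\rangle \geq 0$, valid because $u_\varepsilon \in \mathbb{K}_1^s$, collapses to
\[
0 \leq \langle \mathcal{L}_g^s u_\varepsilon - \mathcal{L}_g^s u,\,u_\varepsilon - u\rangle \leq \varepsilon\, C_\theta\,\|\zeta\|_{L^1(\Omega)} \to 0,
\]
where the lower bound uses strict monotonicity of $\mathcal{L}_g^s$. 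Lemma \ref{ConvgLemma} then promotes this to strong convergence $u_\varepsilon \to u$ in $W^{s,G_{:}}_0(\Omega)$, and strong $L^{q^*}(\Omega)$-convergence follows from Corollary \ref{CompactnessTheoremRellichLp}.

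The a.e.\ Lewy–Stampacchia bound for $u$ is obtained by passing to the limit in the pointwise bound for $u_\varepsilon$: the strong $W^{s,G_{:}}_0$-convergence gives $\mathcal{L}_g^s u_\varepsilon \to \mathcal{L}_g^s u$ in $W^{-s,G_{:}^*}(\Omega)$, while the uniform $L^{\hat{G}^*_\cdot}(\Omega)$-estimate yields (along a subsequence) a weak $L^{\hat{G}^*_\cdot}$-limit which must coincide with $\mathcal{L}_g^s u$; the sandwich is preserved because $\{h \in L^{\hat{G}^*_\cdot}(\Omega) : f\leq h\leq f\vee\mathcal{L}_g^s\psi\}$ is convex and norm-closed, hence weakly closed. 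The main technical point I expect to handle with care is exploiting the quantitative property $[1-\theta(t)]\,t \leq C_\theta$ to obtain an explicit rate in $\varepsilon$, which sidesteps the usual Minty-type soft identification of the weak limit; the secondary delicate point is the interpretation of $\mathcal{L}_g^s\psi$ as an element of the order dual $W^{-s,G_{:}^*}_\prec(\Omega)$ dominated by $f+\zeta$, which is implicit in the assumption $(\mathcal{L}_g^s\psi - f)^+ \in L^{\hat{G}^*_\cdot}(\Omega)$ and is what makes the obstacle-preservation argument rigorous.
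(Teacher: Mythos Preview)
Your proposal is correct and follows essentially the same route as the paper: penalise, show $u_\varepsilon\in\mathbb{K}^s_1$ via T-monotonicity against $(\psi-u_\varepsilon)^+$, read off the pointwise Lewy--Stampacchia bounds from $\mathcal{L}_g^su_\varepsilon=f+\zeta(1-\theta_\varepsilon(u_\varepsilon-\psi))$, use the quantitative bound $[1-\theta(t)]t\leq C_\theta$ to get $\langle\mathcal{L}_g^su_\varepsilon-\mathcal{L}_g^su,u_\varepsilon-u\rangle\leq\varepsilon C_\theta\|\zeta\|_{L^1}$, and conclude strong convergence via Lemma~\ref{ConvgLemma}. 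Your limit passage for the inequality (weak $L^{\hat{G}^*_\cdot}$-compactness plus closedness of the order interval) is in fact spelled out more carefully than the paper's one-line appeal to monotonicity.
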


\begin{proof}
For the one obstacle problem, the proof follows as in the linear case, given in Theorem 4.6 of \cite{FracObsRiesz} with the second obstacle $\varphi=+\infty$.  
In the general case, there exists a unique solution $u_\varepsilon$ to \eqref{PenalProb} by Theorem \ref{EllipticDirichletExistThm}. Next, we show that $u_\varepsilon\geq\psi$, so that the solution $u_\varepsilon\in \mathbb{K}^s$ for each $\varepsilon>0$. 
Indeed, for all $v\in W^{s,G_{:}}_0(\Omega)$ such that $v\geq0$, we have \begin{equation}\label{ObsPbObsIneq}\langle\mathcal{L}_g^s\psi-f+f,v\rangle\leq\langle (\mathcal{L}_g^s\psi-f)^++f,v\rangle\leq\int_\Omega(\zeta+f)v.\end{equation} Taking $v=(\psi-u_\varepsilon)^+\geq0$ and subtracting \eqref{PenalProb} from the above equation, we have \begin{align*}&\quad\langle\mathcal{L}_g^s\psi,(\psi-u_\varepsilon)^+\rangle-\langle\mathcal{L}_g^su_\varepsilon,(\psi-u_\varepsilon)^+\rangle\\&\leq\int_\Omega(\zeta+f)(\psi-u_\varepsilon)^++\int_\Omega\zeta\theta_\varepsilon(u_\varepsilon-\psi)(\psi-u_\varepsilon)^+-\int_\Omega(f+\zeta)(\psi-u_\varepsilon)^+\\&=\int_\Omega\zeta\theta_\varepsilon(u_\varepsilon-\psi)(\psi-u_\varepsilon)^+\\&=0.\end{align*} The last equality is true because either $u_\varepsilon-\psi>0$ which gives $(\psi-u_\varepsilon)^+=0$, or $u_\varepsilon-\psi\leq0$ which gives $\theta_\varepsilon(u_\varepsilon-\psi)=0$ by the construction of $\theta$, thus implying $\theta_\varepsilon(u_\varepsilon-\psi)(\psi-u_\varepsilon)^+=0$. By the T-monotonicity of $\mathcal{L}_g^s$, $(\psi-u_\varepsilon)^+=0$, i.e. $u_\varepsilon\in \mathbb{K}^s$ for any $\varepsilon>0$. 

Then, we show that $u_\varepsilon\geq\psi$ converges strongly in $W^{s,G_{:}}_0(\Omega)$ as $\varepsilon\to0$ to some $u$, which by uniqueness, is the solution of the obstacle problem. Indeed, taking $v=w-u_\varepsilon$ in \eqref{PenalProb} for arbitrary $w\in\mathbb{K}^s$, we have \begin{align*}\langle \mathcal{L}_g^su_\varepsilon,w-u_\varepsilon\rangle&=\int_\Omega(f+\zeta)(w-u_\varepsilon)-\int_\Omega\zeta\theta_\varepsilon(u_\varepsilon-\psi)(w-u_\varepsilon)\\&=\int_\Omega f(w-u_\varepsilon)+\int_\Omega\zeta[1-\theta_\varepsilon(u_\varepsilon-\psi)](w-u_\varepsilon)\\&\geq\int_\Omega f(w-u_\varepsilon)+\int_\Omega\zeta[1-\theta_\varepsilon(u_\varepsilon-\psi)](\psi-u_\varepsilon)\\&=\int_\Omega f(w-u_\varepsilon)-\varepsilon\int_\Omega\zeta[1-\theta_\varepsilon(u_\varepsilon-\psi)]\frac{u_\varepsilon-\psi}{\varepsilon}\\&\geq\int_\Omega f(w-u_\varepsilon)-\varepsilon C_\theta\int_\Omega\zeta\end{align*} since $\zeta,1-\theta_\varepsilon,w-\psi\geq0$ for $w\in\mathbb{K}^s_\psi$. 

Now, taking $w=u$, we obtain \[\langle \mathcal{L}_g^su_\varepsilon -f,u-u_\varepsilon\rangle\geq -\varepsilon C_\theta\int_\Omega\zeta,\] and letting $v=u_\varepsilon\in\mathbb{K}^s_\psi$ in the original obstacle problem \eqref{LgObsProb}, we have \[\langle \mathcal{L}_g^su -f,u_\varepsilon-u\rangle\geq 0.\] 
Taking the difference of these two equations, we have 
\begin{equation}\label{BddPenalEst}\varepsilon C_\theta\int_\Omega\zeta\geq\langle\mathcal{L}_g^su_\varepsilon-\mathcal{L}_g^su,u_\varepsilon-u\rangle.\end{equation} Applying the previous lemma, we have that $u_\varepsilon\to u$ strongly in $W^{s,G_{:}}_0(\Omega)$ as $\varepsilon\to0$.

Then, choosing $\zeta=(\mathcal{L}_g^s\psi-f)^+$ in the penalised problem, the inequality \eqref{LewyStampacchiaNonlinIneq} is also satisfied for $u_\varepsilon$, and since $\mathcal{L}_g^s$ is monotone, \eqref{LewyStampacchiaNonlinIneq} is therefore satisfied weakly by $u$ at the limit $\varepsilon\to0$.

Finally, the $L^q(\Omega)$ strong convergence follows easily using the compactness result in Corollary \ref{CompactnessTheoremRellichLp}.

\end{proof}

\begin{remark}
    Similar results hold for the two obstacles problem, in particular we have

    \begin{equation}\label{LewyStampacchiaNonlinIneq2}
f\wedge\mathcal{L}_g^s\phi\leq \mathcal{L}_g^su\leq f\vee \mathcal{L}_g^s\psi\quad\text{ a.e. in }\Omega.
\end{equation} 
    
    Indeed, the two obstacles problem follows similarly using the bounded penalised problem 
    \[u_\varepsilon\in W^{s,G_{:}}_0(\Omega):\quad\langle \mathcal{L}_g^su_\varepsilon,v\rangle+\int_\Omega\zeta_\psi\theta_\varepsilon(u_\varepsilon-\psi)v-\int_\Omega\zeta_\varphi\theta_\varepsilon(\varphi-u_\varepsilon)v=\int_\Omega (f+\zeta_\psi-\zeta_\varphi)v\quad\forall v\in W^{s,G_{:}}_0(\Omega)\] for \[\zeta_\psi\geq(\mathcal{L}_g^s\psi-f)^+,\quad\zeta_\varphi\geq(\mathcal{L}_g^s\varphi-f)^-,\] with $\theta_\varepsilon(t)=1$ for $t\geq\varepsilon$, followed by taking to the limit of $u_\varepsilon$ to $u$ with the choice $\zeta_\psi=(\mathcal{L}_g^s\psi-f)^+$ and $\zeta_\varphi=(\mathcal{L}_g^s\varphi-f)^-$. 

\end{remark}

\begin{remark}
In the particular case when $g(x,y,r)=|r|^{p-2)} K(x,y)$ for $1<p<\infty$ and $\mathcal{L}_g^s$ corresponds to the fractional $p$-Laplacian, by Remark \ref{LpCoercive}, we furthermore have the estimate 
\[[u_\varepsilon-u]_{W^{s,p}_0(\Omega)}\leq  C_p \varepsilon^{1/(p\vee2)}\] for some constant $C_p$ depending on $p$, $\zeta_\psi$, $\zeta_\varphi$, $k_*$, $k^*$ and $f$.
In particular, this implies that $u_\varepsilon$ converges strongly in $W^{s,p}_0(\Omega)$ to $u$ as $\varepsilon\to0$. \cite{LoRod2024StabilityCoincidenceSet}
    
\end{remark}

\subsection{Regularity in Obstacle Problems}

As an immediate corollary of the approximation with the bounded penalisation, 
based on the regularity results for the Dirichlet problem in Section \ref{sec:DirichletPb}, we can extend these regularity results to the obstacle problems. The first is the uniform boundedness results of their solutions as a corollary of Theorems \ref{UnifBddThmMusielak}.

\begin{theorem}
Suppose $F=f$ and $f\vee\mathcal{L}_{p}^s\psi\in L^m(\Omega)$, with $m > \frac{d}{s(g_*+1)}$ and $g$ satisfies \eqref{gGrowthCond} with $s(g_*+1)<d$. Then, the solution $u$ of the one obstacle problem \eqref{LgObsProb} is bounded, i.e. $u\in L^\infty(\Omega)$. If, in addition, $f\wedge\mathcal{L}_{p}^s\varphi\in L^m(\Omega)$ the solution $u$ of the two obstacles problem also satisfies $u\in L^\infty(\Omega)$.

\end{theorem}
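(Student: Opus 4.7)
The plan is to reduce each obstacle problem to a fractional Dirichlet problem with a right-hand side in $L^m(\Omega)$ and then apply Theorem \ref{UnifBddThmMusielak} directly. The bridge between the two is the Lewy-Stampacchia inequality, which pins $\mathcal{L}_g^s u$ pointwise between the data and (a truncation of) $\mathcal{L}_g^s$ applied to the obstacles.

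For the one obstacle problem, I would first invoke Corollary \ref{CorallaryLewyStamp} (equivalently, the inequality \eqref{LewyStampacchiaNonlinIneq} from Theorem \ref{LewyStampacchiaNonlin}) to obtain
\[ f \leq \mathcal{L}_g^s u \leq f \vee \mathcal{L}_g^s \psi \quad \text{a.e. in } \Omega. \]
Its hypotheses $f,\,(\mathcal{L}_g^s\psi-f)^+\in L^{\hat{G}^*_\cdot}(\Omega)$ reduce, under our assumption $f\vee\mathcal{L}_g^s\psi \in L^m(\Omega)$, to the identity $(\mathcal{L}_g^s\psi-f)^+=f\vee\mathcal{L}_g^s\psi-f\in L^m(\Omega)$; the embedding of $L^m(\Omega)$ into the order dual (cf.\ the discussion following \eqref{LGLpApproxEmbedOmega} and Corollary \ref{CompactnessTheoremRellichLp}, which yields $L^m(\Omega)\hookrightarrow W^{-s,G_{:}^*}_\prec(\Omega)$ precisely when $m>d/(s(g_*+1))$) is what makes the sandwich meaningful. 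Setting $\tilde F:=\mathcal{L}_g^s u$, the two-sided bound gives $|\tilde F|\leq|f|+|f\vee\mathcal{L}_g^s\psi|$, hence $\tilde F\in L^m(\Omega)$.

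Second, I would observe that $u\in W^{s,G_{:}}_0(\Omega)$ then coincides with the unique variational solution of the Dirichlet problem \eqref{EllipticProbEq} with right-hand side $\tilde F \in L^m(\Omega)$. Since we are assuming $m>d/(s(g_*+1))$ and $s(g_*+1)<d$, Theorem \ref{UnifBddThmMusielak} applies and produces $u\in L^\infty(\Omega)$. For the two obstacles case, the same scheme works using the analogous sandwich \eqref{LewyStampacchiaNonlinIneq2}, whose extremes $f\wedge\mathcal{L}_g^s\varphi$ and $f\vee\mathcal{L}_g^s\psi$ are both in $L^m(\Omega)$ by hypothesis.

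The only delicate point is ensuring the \emph{pointwise} Lewy-Stampacchia inequality under the $L^m$ assumption rather than under the sharper $L^{\hat{G}^*_\cdot}$ assumption used in Corollary \ref{CorallaryLewyStamp}. The cleanest way to sidestep this is to argue directly with the bounded penalisation \eqref{PenalProb}: for each $\varepsilon>0$, $u_\varepsilon$ solves the Dirichlet equation
\[ \mathcal{L}_g^s u_\varepsilon = f+\zeta\bigl(1-\theta_\varepsilon(u_\varepsilon-\psi)\bigr), \]
with right-hand side pointwise sandwiched between $f$ and $f\vee\mathcal{L}_g^s\psi$ independently of $\varepsilon$ (since $0\leq 1-\theta_\varepsilon\leq 1$ and $\zeta=(\mathcal{L}_g^s\psi-f)^+$), hence uniformly bounded in $L^m(\Omega)$. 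Applying Theorem \ref{UnifBddThmMusielak} to each $u_\varepsilon$ then yields a uniform $L^\infty$ bound, which transfers to the limit $u$ via the strong convergence $u_\varepsilon\to u$ in $W^{s,G_{:}}_0(\Omega)$ (and hence a.e., along a subsequence) established in Theorem \ref{LewyStampacchiaNonlin}. The analogous penalisation for the two obstacles problem described in the subsequent remark handles the second case identically.
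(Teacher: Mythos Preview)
Your proposal is correct and matches the paper's approach exactly: the paper states this theorem as ``an immediate corollary of the approximation with the bounded penalisation, based on the regularity results for the Dirichlet problem,'' i.e., precisely your second route of applying Theorem~\ref{UnifBddThmMusielak} uniformly to the penalised solutions $u_\varepsilon$ and passing to the limit via Theorem~\ref{LewyStampacchiaNonlin}. You are in fact more careful than the paper in flagging that the pointwise Lewy--Stampacchia inequality (Corollary~\ref{CorallaryLewyStamp}) is stated under an $L^{\hat{G}^*_\cdot}$ hypothesis rather than the $L^m$ one assumed here, and in noting that the penalisation argument goes through verbatim under $L^m$ data because $L^m(\Omega)\hookrightarrow W^{-s,G_{:}^*}(\Omega)$ and $\zeta\in L^1(\Omega)$.
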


Next, we have the H\"older regularity results for the solution to the obstacle problem.

\begin{theorem}
Let $F=f\in L^\infty(\Omega)$. Suppose either
\begin{enumerate}[label=(\alph*)]
    \item $g(x,y,r)$ is of the form $|r|^{p(x,y)-2}K(x,y)$ as in the fractional $p$-Laplacian $\mathcal{L}^s_p$ in \eqref{LpLap} for $1<p_-\leq p(x,y)\leq p_+<\infty$, and $K$ satisfies \eqref{KAssump}, with $p(\cdot,\cdot)$ and $K(\cdot,\cdot)$ symmetric, such that $p(x,y)$ is log-H\"older continuous on the diagonal $D=\{(x,x):x\in\Omega\}$, i.e. 
\[\sup_{0<r\leq 1/2}\left[\log\left(\frac{1}{r}\right)\sup_{B_r\subset\Omega}\sup_{x_2,y_1,y_2\in B_r}|p(x_1,y_1)-p(x_2,y_2)|\right]\leq C \quad\text{ for some }C>0,\]

    \item $g$ is isotropic, i.e. $g=g(r)$ is independent of $(x,y)$, with $G$ satisfying the $\Delta'$ condition,

    \item $g$ is isotropic with $\bar{g}=\bar{g}(r)$ convex in $r$ and $g_*\geq1$ in \eqref{gGrowthCond}, or

    \item $g(x,y,r)$ is uniformly bounded and positive as in \eqref{tildegcond} with symmetric anisotropy
\end{enumerate}
If $f,f\vee\mathcal{L}_{p}^s\psi\in L^\infty(\Omega)$ in the one obstacle problem and also $f\wedge\mathcal{L}_{p}^s\varphi\in L^\infty(\Omega)$ in the two obstacles problem, their solutions $u$ are H\"older continuous, i.e., in cases (a) and (b), locally in $\Omega$, \[u\in C^\alpha(\Omega)\text{ for some }0<\alpha<1.\] 
and, in cases (c) and (d), up to the boundary, \[u\in C^\alpha(\bar{\Omega})\text{ for some }0<\alpha<1.\]

\end{theorem}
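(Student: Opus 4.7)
The plan is to reduce each obstacle problem to a Dirichlet problem driven by data in $L^\infty(\Omega)$, and then invoke the Hölder regularity results for the Dirichlet problem already collected in Section~\ref{sec:DirichletPb}.

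First, since $f\in L^\infty(\Omega)$ and $f\vee\mathcal{L}_g^s\psi\in L^\infty(\Omega)$ (and, in the two obstacles case, $f\wedge\mathcal{L}_g^s\varphi\in L^\infty(\Omega)$), and $L^\infty(\Omega)\subset L^{\hat{G}^*_\cdot}(\Omega)$ by \eqref{LGLpApproxEmbedOmega}, the a.e.\ Lewy-Stampacchia inequalities of Corollary~\ref{CorallaryLewyStamp} are available. Setting $\tilde f:=\mathcal{L}_g^s u$, I would read off the pointwise a.e.\ bound
\[
\norm{\tilde f}_{L^\infty(\Omega)}\leq \norm{f\vee\mathcal{L}_g^s\psi}_{L^\infty(\Omega)}\vee\norm{f\wedge\mathcal{L}_g^s\varphi}_{L^\infty(\Omega)}
\]
(with the second term dropped in the one-obstacle case), so that $\tilde f\in L^\infty(\Omega)$.

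Next, by the uniqueness assertion in Proposition~\ref{EllipticDirichletExistThm} together with Theorem~\ref{LgTMonotone}, $u$ is then the unique variational solution in $W^{s,G_{:}}_0(\Omega)$ of the Dirichlet problem $\mathcal{L}_g^s u=\tilde f$, with bounded right-hand side. I would then invoke the appropriate regularity result case by case: in case (a), Theorem~\ref{HolderRegMusielak}(a) gives local Hölder continuity $u\in C^\alpha(\Omega)$; in case (b), Theorem~\ref{EllipticDirichletRegThm1}(a) gives the same local result under the $\Delta'$ condition on $G$; in case (c), Theorem~\ref{EllipticDirichletRegThm1}(b) yields global Hölder continuity $u\in C^\alpha(\bar\Omega)$ under the convexity of $\bar g$ and $g_*\geq 1$; and in case (d), Theorem~\ref{EllipticDirichletRegThm2}(b) gives global Hölder continuity in the Hilbertian symmetric setting \eqref{tildegcond}.

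The argument is essentially formal once the Lewy-Stampacchia inequalities are in place. The one point deserving attention is that the Dirichlet regularity theorems cited depend on the right-hand side only through its $L^\infty$ norm (together with a global bound on $u$, supplied by Theorem~\ref{UnifBddThmMusielak} in cases (a)--(c) and by the $L^\infty$ input in case (d)), so that the bound on $\tilde f$ from Lewy-Stampacchia---which only feels $\mathcal{L}_g^s\psi,\mathcal{L}_g^s\varphi$ through their $L^\infty$ norms---suffices; no pointwise regularity of $\psi$ or $\varphi$ beyond $\mathcal{L}_g^s\psi,\mathcal{L}_g^s\varphi\in L^\infty$ is needed. I do not anticipate a genuine obstacle beyond bookkeeping the four cases.
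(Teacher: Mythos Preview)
Your proposal is correct and follows essentially the same approach as the paper: reduce the obstacle problem to a Dirichlet problem with $L^\infty$ right-hand side via the Lewy--Stampacchia inequalities, then invoke the case-by-case H\"older regularity results of Section~\ref{sec:DirichletPb}. The paper phrases this as an ``immediate corollary of the approximation with the bounded penalisation'' (i.e.\ Theorem~\ref{LewyStampacchiaNonlin}), but the content is the same as what you obtain through Corollary~\ref{CorallaryLewyStamp}, and your write-up spells out the bookkeeping that the paper leaves implicit.
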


\begin{remark}
The result for (a) was previously given for the isotropic fractional $p$-Laplacian for $\psi$ H\"older continuous in Theorem 6 of \cite{K+Kuusi+Palatucci2016CVPDE-ObstaclePbFracPLap} or Theorem 1.3 of \cite{Piccinini2022NonA-ObstaclePbFracPLap}.
\end{remark}

\begin{remark}
 In the case when $g(x,y,r)$ is uniformly bounded and positive as in \eqref{tildegcond}, if $f, f\vee\mathcal{L}_{p}^s\psi\in L^q_{loc}(\Omega)$ (and $f\wedge\mathcal{L}_{p}^s\varphi\in L^q_{loc}(\Omega)$, resp. for the two obstacles problem) for some $q>\frac{2d}{d+2}$, then, the solutions $u$ of the obstacle problems are such that  $u\in W^{s+\delta,2+\delta}_{loc}(\Omega)$, for some positive $0<\delta<1-s$, by Theorem 1.1 of \cite{KuusiMingioneSire2015APDE-RegularityNonlocal} as stated in Part (a) of Theorem \ref{EllipticDirichletRegThm2}.
\end{remark}

\section{Capacities}

In this section, we make a brief introduction to the basic relation between the obstacle problem and potential theory, extending the seminal idea of Stampacchia \cite{StampacchiaEllipticEqsLmReg} to the fractional generalised Orlicz framework. Other nonlinear extensions to nonlinear potential theory have been considered by \cite{AttouchPic_NPT_AFSToul_1979}, for general Banach-Dirichlet spaces, by \cite{CapacityBook}, for weighted Sobolev spaces for $p$-Laplacian operators, and more recently by \cite{OrliczCapacities} in generalised Orlicz spaces for classical derivatives with a slightly different definition of capacity. 

\subsection{The Fractional Generalised Orlicz Capacity}

For $E\subset\Omega$, one says that $u\succeq0$ on $E$ (or $u\geq0$ on $E$ in the sense of $W^{s,G_{:}}_0(\Omega)$) if there exists a sequence of Lipschitz functions with compact support in $\Omega$ $u_k\to u$ in $W^{s,G_{:}}_0(\Omega)$ such that $u_k\geq0$ on $E$. Clearly if $u\succeq0$ on $E$, then also $u\geq0$ a.e. on $E$. On the other hand if $u\geq0$ a.e. on $\Omega$, then $u\succeq0$ on $\Omega$ (see for instance Proposition 5.2 of \cite{KinderlehrerStampacchia})

Let $E\subset\Omega$ be any compact subset. Define the nonempty closed convex set of $W^{s,G_{:}}_0(\Omega)$ by \[\mathbb{K}^s_E=\{v\in W^{s,G_{:}}_0(\Omega):v\succeq1\text{ on }E\},\] and consider the following variational inequality of obstacle type \begin{equation}\label{CapVarIneq}u\in \mathbb{K}^s_E:\langle\mathcal{L}_g^su,v-u\rangle\geq0,\quad\forall v\in \mathbb{K}^s_E.\end{equation} This variational inequality clearly has a unique solution and consequently we can also extend to the fractional generalised Orlicz framework the following theorem, which is due to Stampacchia \cite{StampacchiaEllipticEqsLmReg} for general linear second order elliptic differential operators with discontinuous coefficients. 

\begin{theorem}\label{RadonMeasure}
For any compact $E\subset\Omega$, the unique solution $u$ of \eqref{CapVarIneq}, called the \emph{$(s,G_{:})$-capacitary potential} of $E$, is such that \[u=1\text{ on }E\text{ (in the sense of }W^{s,G_{:}}_0(\Omega))\]\[\mu_{s,G_{:}}=\mathcal{L}_g^su\geq0\text{ with }supp(\mu_{s,G_{:}})\subset E.\]  

Moreover, for the non-negative Radon measure $\mu_{s,G_{:}}$, one has 
\begin{equation}\label{Lg-Capacity}
C_s^g(E)=\langle\mathcal{L}_g^su,u\rangle=\int_\Omega d\mu_{s,G_{:}}=\mu_{s,G_{:}}(E)
\end{equation}
and this number is the \emph{$(s,G_{:})$-capacity} of $E$ with respect to the operator $\mathcal{L}_g^su$.
\end{theorem}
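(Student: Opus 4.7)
The plan is to establish the four assertions of the theorem in sequence: (a) existence and uniqueness of $u$; (b) $u = 1$ on $E$; (c) $\mu_{s,G_{:}} := \mathcal{L}_g^s u$ is a positive Radon measure supported in $E$; and (d) the capacity identity $\langle \mathcal{L}_g^s u, u\rangle = \mu_{s,G_{:}}(E)$. For (a), the convex set $\mathbb{K}_E^s$ is nonempty since $E$ is compact and one may take a Lipschitz cutoff equal to $1$ on a neighbourhood of $E$ with compact support in $\Omega$. Existence and uniqueness of $u$ then follow by the same convex-analysis argument used in Theorem \ref{EllipticObsProbThm}, since $u$ is equivalently the unique minimiser on $\mathbb{K}_E^s$ of the strictly convex, coercive, weakly lower semicontinuous functional $\Gamma_{s,G_{:}}$.

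For (b), the crucial step is to show that $\tilde u := u\wedge 1$ again belongs to $\mathbb{K}_E^s$. Given Lipschitz compactly supported $u_k\to u$ in $W^{s,G_{:}}_0(\Omega)$ with $u_k\geq 1$ on $E$, the truncations $u_k\wedge 1$ are Lipschitz with compact support and satisfy $u_k\wedge 1 = 1$ on $E$; one verifies $u_k\wedge 1\to \tilde u$ in $W^{s,G_{:}}_0(\Omega)$ via the pointwise contraction property of truncation combined with the $\Delta_2$-condition on $G_{:}$. At the modular level the same contraction yields $\Gamma_{s,G_{:}}(\tilde u)\leq \Gamma_{s,G_{:}}(u)$, so uniqueness of the minimiser forces $\tilde u = u$, i.e.\ $u\leq 1$ in $\Omega$; combined with $u\succeq 1$ on $E$ this gives $u = 1$ on $E$.

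For (c) and (d), fix $\phi\in W^{s,G_{:}}_0(\Omega)$ with $\phi\geq 0$; approximating $\phi$ by Lipschitz $\phi_k\geq 0$ and adding to the $u_k$ above shows $u+\phi\succeq 1$ on $E$, hence $u+\phi\in \mathbb{K}_E^s$, and testing the variational inequality gives $\langle \mathcal{L}_g^s u, \phi\rangle\geq 0$. By the Riesz representation of positive distributions, $\mathcal{L}_g^s u$ identifies with a positive Radon measure $\mu_{s,G_{:}}$. For the support, given $\phi\in C_c^\infty(\Omega\setminus E)$, both $u\pm\varepsilon\phi$ remain in $\mathbb{K}_E^s$ for every $\varepsilon\in\mathbb{R}$ since $\phi$ vanishes near $E$, which forces $\langle \mu_{s,G_{:}},\phi\rangle = 0$ and hence $\mathrm{supp}(\mu_{s,G_{:}})\subset E$. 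Finally, taking $u_k\to u$ as in (b) and replacing each $u_k$ by $u_k\wedge 1$, we obtain Lipschitz compactly supported approximants with $u_k = 1$ on $E = \mathrm{supp}(\mu_{s,G_{:}})$, so $\langle \mu_{s,G_{:}}, u_k\rangle = \int u_k\,d\mu_{s,G_{:}} = \mu_{s,G_{:}}(E)$; passing to the limit on the dual side yields $\langle \mathcal{L}_g^s u, u\rangle = \mu_{s,G_{:}}(E)$.

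The main obstacle I anticipate is step (b): rigorously proving that truncation is continuous in $W^{s,G_{:}}_0(\Omega)$ so as to deduce $u\wedge 1\in\mathbb{K}_E^s$. This relies on a careful use of the pointwise contraction property of the $s$-finite-difference quotient together with a dominated convergence argument powered by the $\Delta_2$-condition, and is the fractional Musielak–Orlicz analogue of a classical truncation-stability fact. A secondary technical point in step (d) is identifying the dual pairing $\langle \mathcal{L}_g^s u, u\rangle$ with the measure-theoretic integral $\int u\,d\mu_{s,G_{:}}$, which is circumvented cleanly by the approximation of $u$ by Lipschitz functions that are identically $1$ on the support of $\mu_{s,G_{:}}$.
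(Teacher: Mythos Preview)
Your proof is correct and broadly follows the paper's structure. The one genuine difference is in step (b): the paper tests the variational inequality with $v = u\wedge 1 = u-(u-1)^+$ and invokes the strict T-monotonicity of $\mathcal{L}_g^s$ (Theorem~\ref{LgTMonotone}) together with the translation-invariance of $\delta^s$ (so that $\mathcal{L}_g^s(u-1) = \mathcal{L}_g^s u$ in the duality pairing) to derive the contradiction
\[
0 < \langle \mathcal{L}_g^s(u-1), (u-1)^+\rangle = \langle \mathcal{L}_g^s u, (u-1)^+\rangle \leq 0
\]
whenever $(u-1)^+\neq 0$. You instead pass through the equivalent minimization formulation and use the pointwise contraction $|\delta^s(u\wedge 1)|\leq |\delta^s u|$ to obtain $\Gamma_{s,G_{:}}(u\wedge 1)\leq \Gamma_{s,G_{:}}(u)$, concluding by uniqueness of the minimizer. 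Both arguments require $u\wedge 1\in\mathbb{K}_E^s$, which the paper simply asserts; your care about truncation stability in $W^{s,G_{:}}_0(\Omega)$ is therefore well placed. The paper's T-monotonicity route has the advantage of reusing a result already established in the paper and not revisiting the modular, while your route is self-contained and does not rely on Theorem~\ref{LgTMonotone}. Your treatment of (d) via Lipschitz approximants identically $1$ on $E$ is more explicit than the paper's, which records the conclusion without detail.
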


\begin{proof}
The proof follows a similar approach to the classical case (\cite[Theorem 3.9]{StampacchiaEllipticEqsLmReg}  or \cite[Theorem 8.1]{ObstacleProblems}).
Taking $v=u\wedge1=u-(u-1)^+\in \mathbb{K}^s_E$ in \eqref{CapVarIneq}, one has, by T-monotonicity (Theorem \ref{LgTMonotone}), \[0<\langle\mathcal{L}_g^s(u-1),(u-1)^+\rangle=\langle\mathcal{L}_g^su,(u-1)^+\rangle\leq0\] since the $\delta^s$ is invariant for translations. Hence $u\geq 1$ in $\Omega$, which implies $u\preceq1$ in $\Omega$. But $u\in \mathbb{K}^s_E$, so $u\succeq1$ on $E$. Therefore, the first result $u=1$ on $E$ follows.

For the second result, set $v=u+\varphi\in \mathbb{K}^s_E$ in \eqref{CapVarIneq} with an arbitrary $\varphi\in C_c^\infty(\Omega)$, $\varphi\geq0$. Then, by the Riesz-Schwartz theorem (see for instance \cite[Theorem 1.1.3]{AdamsHedberg}), there exists a non-negative Radon measure $\mu_{s,G_{:}}$ on $\Omega$ such that \[\langle\mathcal{L}_g^su,\varphi\rangle=\int_\Omega\varphi \,d\mu_{s,G_{:}},\quad\forall\varphi\in C_c^\infty(\Omega).\]
Moreover, for $x\in\Omega\backslash E$, there is a neighbourhood $O\subset\Omega\backslash E$ of $x$ so that $u+\varphi\in \mathbb{K}^s_E$ for any $\varphi\in C_c^\infty(O)$. Therefore,\[\langle\mathcal{L}_g^su,\varphi\rangle=0,\quad\forall\varphi\in C_c^\infty(\Omega\backslash E)\]which means $\mu_{s,G_{:}}=\mathcal{L}_g^su=0$ in $\Omega\backslash E$. Therefore, $supp(\mu_{s,G_{:}})\subset E$ and the third result follows immediately.
\end{proof}

\begin{remark}
    In fact, the $(s,G_{:})$-capacity is a capacity of $E$ with respect to $\Omega$ in the same line as in Stampacchia \cite{StampacchiaEllipticEqsLmReg} (see also \cite{KinderlehrerStampacchia} and \cite{ObstacleProblems}).
    This type of characterisation of capacitary potentials and their relation to positive measures with finite energy have been also considered in an abstract nonlinear framework 
 in Banach-Dirichlet spaces, including classical Sobolev spaces, in\cite{AttouchPic_NPT_AFSToul_1979}.
\end{remark}

\begin{remark}
    For any subset $F\subset\Omega$, defining the capacity of $F$ by taking the supremum of the capacity for all compact sets $E\subset F$, it follows that the $(s,G_{:})$-capacity is an increasing set function and it is expected that it is a Choquet capacity, as in other general theories of linear and nonlinear potentials. For instance, see \cite{Warma2015FracCapacity} for the case of the linear operators in \eqref{La}, or in the case of the fractional $p$-Laplacian as in \eqref{LpLap} Theorem 1.1 of \cite{ShiZhang2020MathNachrFracPLapCapacity} and Theorem 2.4 of \cite{ShiXiao2016PotAnalFracPLapCapacity}, or a non-variational case in Theorem 4.1 of \cite{ShiXiao2017CVPDEFracCapPLap}. However, it is out of the scope of this work to pursue the theory of generalised Orlicz fractional capacity.
\end{remark}

\subsection{The $s$-Capacity in the $H^s_0(\Omega)$ Hilbertian Nonlinear Framework}\label{sect:HilbertianCapacity}

We are now particularly interested in extending Stampacchia's theory to the nonlinear Hilbertian framework associated with $\mathcal{L}^s_g$ for strictly positive and bounded $g$ satisfying \eqref{tildegcond}.

We denote by $C_s$ the capacity associated to the norm of $H^s_0(\Omega)$, which is defined for any compact set $E\subset\Omega$ by 
\[C_s(E)=\inf\left\{\norm{v}_{H^s_0(\Omega)}^2:v\in H^s_0(\Omega),v\succeq1 \text{ on }E\right\}=\langle(-\Delta)^s\bar{u},\bar{u}\rangle,\]
where $\bar{u}$ is the corresponding \emph{$s$-capacitary potential} of $E$.


We notice that the $C_s$-capacity corresponds to the capacity associated with the fractional Laplacian $(-\Delta)^s$ and the $s$-capacitary potential of a compact set $E$ is the solution of the obstacle problem \eqref{CapVarIneq} when $\mathcal{L}_g^s=(-\Delta)^s$ and the bilinear form \eqref{La} is the inner product in $H^s_0(\Omega)$. 

It is well-known (see for instance Theorem 5.1 of \cite{FracObsRiesz}) that for every $u\in H^s_0(\Omega)$, there exists a unique (up to a set of capacity 0) quasi-continuous function $\bar{u}:\Omega\to\mathbb{R}$ such that $\bar{u}=u$ a.e. on $\Omega$. Thus, it makes sense to identify a function $u\in H^s_0(\Omega)$ with the class of quasi-continuous functions that are equivalent quasi-everywhere (q.e.). Denote the space of such equivalent classes by $Q_s(\Omega)$. Then, for every element $u\in H^s_0(\Omega)$, there is an associated $\bar{u}\in Q_s(\Omega)$.

Define the space $L^2_{C_s}(\Omega)$ by 
\[L^2_{C_s}(\Omega)=\{\phi\in Q_s(\Omega):\exists u\in H^s_0(\Omega):\bar{u}\geq|\phi|\text{ q.e. in }\Omega\}\] and its associated norm (see \cite{AttouchPicard})
\[\norm{\phi}_{L^2_{C_s}(\Omega)}=\inf\{\norm{u}_{H^s_0(\Omega)}:u\in H^s_0(\Omega),\bar{u}\geq|\phi|\text{ q.e. in }\Omega\}.\]  
Then, $L^2_{C_s}(\Omega)$ is a Banach space and its dual space can be identified with the order dual of $H^s_0(\Omega)$ (by Theorem 5.6 of \cite{FracObsRiesz}), i.e. \[[L^2_{C_s}(\Omega)]'=H^{-s}(\Omega)\cap M(\Omega)=H^{-s}_\prec(\Omega)=[H^{-s}(\Omega)]^+-[H^{-s}(\Omega)]^+,\] where $M(\Omega)$ is the set of bounded measures in $\Omega$. Furthermore, by Proposition 5.2 of \cite{FracObsRiesz}, the injection of $H^s_0(\Omega)\cap C_c(\Omega)\hookrightarrow L^2_{C_s}(\Omega)$ is dense.

Now we consider the special Hilbertian case of Theorem \ref{RadonMeasure} for a nonlinear operator $\mathcal{L}_g^s$ when $g(x,y,r)$ corresponds to the nonlinear kernel under the assumptions \eqref{gGrowthCond} and \eqref{tildegcond}, i.e. such that $0<\gamma_*\leq g(x,y,r)\leq \gamma^*$ for $0<\gamma_*<1<\gamma^*$.
In this case, we have a simple comparison of the capacities.

\begin{theorem}\label{CapProp5.4}
For any subset $F\subset\Omega$, \[\gamma_*C_s(F)\leq C_s^g(F)\leq \frac{{\gamma^*}^2}{\gamma_*}C_s(F).\]
\end{theorem}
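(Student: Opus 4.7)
The plan is to prove the two-sided bound first for compact sets $E\subset\Omega$, and then extend to arbitrary $F$ via the inner regularization $C_s^g(F)=\sup\{C_s^g(E):E\subset F\text{ compact}\}$ noted in the earlier remark, together with the analogous property for $C_s$. For a fixed compact $E$, I write $u\in\mathbb{K}^s_E$ for the $(s,G_{:})$-capacitary potential of Theorem \ref{RadonMeasure} (the solution of \eqref{CapVarIneq}), and $\bar u\in\mathbb{K}^s_E$ for the $s$-capacitary potential, namely the analogous obstacle solution when $\mathcal{L}_g^s$ is replaced by $(-\Delta)^s$. The main ingredients are: (i) integrating the pointwise sandwich $\gamma_*\rho\leq g(x,y,\rho)\rho\leq \gamma^*\rho$ coming from \eqref{tildegcond} gives $\tfrac{\gamma_*}{2}r^2\leq G(x,y,r)\leq \tfrac{\gamma^*}{2}r^2$, whence
\[\tfrac{\gamma_*}{2}\|v\|_{H^s_0(\Omega)}^2\leq\Phi(v)\leq\tfrac{\gamma^*}{2}\|v\|_{H^s_0(\Omega)}^2,\qquad \Phi(v):=\int_{\mathbb{R}^d}\!\int_{\mathbb{R}^d}G(x,y,|\delta^s v|)\frac{dx\,dy}{|x-y|^d};\]
and (ii) Proposition \ref{EllipticObsProbThm} applied with zero data identifies $u$ as the minimizer of $\Phi$ over $\mathbb{K}^s_E$.

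For the \textbf{lower bound}, I use that $u\in\mathbb{K}^s_E$ is admissible in the variational definition of $C_s(E)$, hence $C_s(E)\leq \|u\|_{H^s_0(\Omega)}^2$. Combining with
\[C_s^g(E)=\langle \mathcal{L}_g^s u,u\rangle=\int_{\mathbb{R}^d}\!\int_{\mathbb{R}^d} g(x,y,|\delta^s u|)(\delta^s u)^2\,\frac{dx\,dy}{|x-y|^d}\geq \gamma_*\|u\|_{H^s_0(\Omega)}^2,\]
where the last step uses the lower estimate in \eqref{tildegcond}, yields immediately $\gamma_* C_s(E)\leq C_s^g(E)$.

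For the \textbf{upper bound}, I would chain the minimality of $u$ against the admissible competitor $\bar u\in\mathbb{K}^s_E$ with the two-sided control on $\Phi$:
\[\tfrac{\gamma_*}{2}\|u\|_{H^s_0(\Omega)}^2\leq \Phi(u)\leq \Phi(\bar u)\leq \tfrac{\gamma^*}{2}\|\bar u\|_{H^s_0(\Omega)}^2=\tfrac{\gamma^*}{2}C_s(E),\]
so that $\|u\|_{H^s_0(\Omega)}^2\leq \tfrac{\gamma^*}{\gamma_*}\,C_s(E)$. The upper pointwise bound in \eqref{tildegcond} then gives $\langle \mathcal{L}_g^s u,u\rangle\leq \gamma^*\|u\|_{H^s_0(\Omega)}^2$, and the asserted estimate $C_s^g(E)\leq \tfrac{{\gamma^*}^2}{\gamma_*}\,C_s(E)$ follows. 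Extension to general $F$ is immediate from the monotonicity of both set functions and the common inner-regularization convention.

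I do not anticipate any substantive obstacle here: the argument is a direct sandwich using the quadratic control of $G$ by $\tfrac12 r^2$. The one place requiring care is the identification of the capacitary variational inequality \eqref{CapVarIneq} with the minimization of $\Phi$ over $\mathbb{K}^s_E$, but this is exactly the zero-data case of Proposition \ref{EllipticObsProbThm}.
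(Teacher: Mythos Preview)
Your argument is correct. For the lower bound it is in substance the same as the paper's (both amount to $C_s(E)\leq\|u\|_{H^s_0}^2$ together with $\gamma_*\|u\|_{H^s_0}^2\leq\langle\mathcal{L}_g^su,u\rangle$), though the paper reaches the first inequality less directly, by testing the $(-\Delta)^s$ variational inequality with $v=u$ and applying Young's inequality rather than simply invoking admissibility of $u$ in the infimum defining $C_s(E)$.

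For the upper bound the approaches genuinely differ. The paper stays at the level of the variational inequality: it takes $v=\bar u$ in \eqref{CapVarIneq} to get $C_s^g(E)\leq\langle\mathcal{L}_g^s u,\bar u\rangle\leq\gamma^*\|u\|_{H^s_0}\|\bar u\|_{H^s_0}$, then uses Young's inequality $\gamma^*\|u\|\,\|\bar u\|\leq\tfrac{\gamma_*}{2}\|u\|^2+\tfrac{{\gamma^*}^2}{2\gamma_*}\|\bar u\|^2$ and absorbs the first term via $\gamma_*\|u\|^2\leq C_s^g(E)$. You instead exploit the energy-minimization characterization of $u$ and the pointwise sandwich $\tfrac{\gamma_*}{2}r^2\leq G(x,y,r)\leq\tfrac{\gamma^*}{2}r^2$ to obtain $\|u\|_{H^s_0}^2\leq\tfrac{\gamma^*}{\gamma_*}C_s(E)$ directly, finishing with $\langle\mathcal{L}_g^su,u\rangle\leq\gamma^*\|u\|_{H^s_0}^2$. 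Your route is a bit more elementary (no Young, no absorption trick) but relies on the equivalence of \eqref{CapVarIneq} with minimization of $\Phi$ over $\mathbb{K}^s_E$; note that Theorem~\ref{EllipticObsProbThm} is stated only for the obstacle sets $\mathbb{K}^s_1,\mathbb{K}^s_2$, so strictly speaking you are invoking the general convex-analysis fact behind it rather than the theorem as written. The paper's route has the mild advantage of working purely with the variational inequality formulation already fixed in this section.
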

\begin{proof}
We first show it for a compact set $E\subset\Omega$.
Let $u$ be the $(s,G_{:})$-capacitary potential of $E$, and $\bar{u}$ be the $s$-capacitary potential of $E$. Since $\bar{u}\succeq1$ on $E$, we can choose $v=\bar{u}\in \mathbb{K}^s_{E}$ in \eqref{CapVarIneq} to get
\begin{align*}C_s^g(E)&=\langle\mathcal{L}_g^su,u\rangle\leq\langle\mathcal{L}_g^su,\bar{u}\rangle\\&\leq \gamma^*\norm{u}_{H^s_0(\Omega)}\norm{\bar{u}}_{H^s_0(\Omega)}\leq \frac{\gamma_*}{2}\norm{u}_{H^s_0(\Omega)}^2+\frac{{\gamma^*}^2}{2\gamma_*}\norm{\bar{u}}_{H^s_0(\Omega)}^2\\&\leq\frac{1}{2} \langle\mathcal{L}_g^su,u\rangle+\frac{{\gamma^*}^2}{2\gamma_*}C_s(E)=\frac{1}{2}C_s^g(E)+\frac{{\gamma^*}^2}{2\gamma_*}C_s(E)\end{align*} by Cauchy-Schwarz inequality and the coercivity of $g$. Similarly, we can choose $v=u\in \mathbb{K}^s_{E}$ for \eqref{CapVarIneq} for $C_s(E)$, with $\mathcal{L}_g^s=(-\Delta)^s$, using again the coercivity of $g$, and obtain
\begin{align*}C_s(E)=&\langle(-\Delta)^s\bar{u},\bar{u}\rangle\leq\langle(-\Delta)^s\bar{u},u\rangle\\&\leq\norm{\bar{u}}_{H^s_0(\Omega)}\norm{u}_{H^s_0(\Omega)}\leq \frac{1}{2}\norm{\bar{u}}_{H^s_0(\Omega)}^2+\frac{1}{2}\norm{u}_{H^s_0(\Omega)}^2\\&\leq \frac{1}{2}C_s(E)+\frac{1}{2\gamma_*}\langle\mathcal{L}_g^su,u\rangle= \frac{1}{2}C_s(E)+\frac{1}{2\gamma_*}C_s^g(E).\end{align*}

Finally, we can extend this result for general sets $F\subset\Omega$ by taking the supremum over all compact sets $E$ in $F$.
\end{proof}



As a simple application, we consider the corresponding nonlinear nonlocal obstacle problem in $L^2_{C_s}(\Omega)$. This extends some results of \cite{StampacchiaEllipticEqsLmReg} and \cite{AdamsCapacityObstacle} (see also \cite{ObstacleProblems}). See also Propositions 4.18 and 5.1 of \cite{AttouchPic_NPT_AFSToul_1979}, which gives the existence result in the local classical case of $W^{1,p}_0(\Omega)$.

\begin{theorem}
Let $\psi$ be an arbitrary function in $L^2_{C_s}(\Omega)$. Suppose that the closed convex set $\bar{\mathbb{K}}^s$ is such that \[\bar{\mathbb{K}}^s=\{v\in H^s_0(\Omega):\bar{v}\geq\psi \text{ q.e. in }\Omega\}\neq\emptyset.\] Then there is a unique solution to \begin{equation}\label{ComparisonCap1}u\in\bar{\mathbb{K}}^s:\quad\langle\mathcal{L}_g^su,v-u\rangle\geq0,\quad\forall v\in\bar{\mathbb{K}}^s,\end{equation} which is non-negative and such that \begin{equation}\label{ComparisonCap2}\norm{u}_{H^s_0(\Omega)}\leq(\gamma^*/\gamma_*)\norm{\psi^+}_{L^2_{C_s}(\Omega)}.\end{equation} Moreover, there is a unique measure $\mu_{s,g}=\mathcal{L}_g^su\geq0$, concentrated on the coincidence set $\{u=\psi\}=\{u=\psi^+\}$, verifying \begin{equation}\label{ComparisonCap3}\langle\mathcal{L}_g^su,v\rangle=\int_\Omega\bar{v}\,d\mu_{s,g},\quad\forall v\in H^s_0(\Omega),\end{equation} and \begin{equation}\label{ComparisonCap4}\mu_{s,g}(E)\leq\left(\frac{\gamma^{*2}}{\gamma_*^{3/2}}\right)\norm{\psi^+}_{L^2_{C_s}(\Omega)}\left[C_s^g(E)\right]^{1/2},\quad\forall E\Subset\Omega,\end{equation} in particular $\mu_{s,g}$ does not charge on sets of capacity zero.
\end{theorem}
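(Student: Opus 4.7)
The plan is to treat the five claims in sequence, using the Hilbertian structure together with the strict coercivity and Lipschitz continuity of $\mathcal{L}^s_g$ from Proposition \ref{LgCoercive}. Existence and uniqueness of $u$ follow from classical convex analysis: $\bar{\mathbb{K}}^s$ is nonempty, convex, and closed in $H^s_0(\Omega)$ (q.e.\ inequalities being preserved under strong convergence up to subsequences), and coercivity together with the strict T-monotonicity from Theorem \ref{LgTMonotone} yield a unique solution, equivalent to minimising the strictly convex energy associated with $\mathcal{L}^s_g$.

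For the non-negativity and the norm estimate, the key observation is that $u^+ \in \bar{\mathbb{K}}^s$, since its quasi-continuous representative satisfies $\bar{u}^+ \geq \bar{u} \geq \psi$ q.e. Testing the VI with $v = u^+$ gives $\langle \mathcal{L}^s_g u, u^-\rangle \geq 0$, while expanding the double integral through $u = u^+ - u^-$ and using the pointwise identity $u^+(x)u^-(x)\equiv 0$, exactly as in the proof of Theorem \ref{LgTMonotone}, produces the opposite inequality $\langle \mathcal{L}^s_g u, u^-\rangle \leq -\gamma_* \norm{u^-}^2_{H^s_0(\Omega)}$; combining the two forces $u^- \equiv 0$. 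Consequently $\bar{u} \geq \psi^+$ q.e., so any $w \in H^s_0(\Omega)$ with $\bar{w} \geq \psi^+$ q.e.\ lies in $\bar{\mathbb{K}}^s$. Choosing such $w$ with $\norm{w}_{H^s_0(\Omega)} \leq (1+\epsilon)\norm{\psi^+}_{L^2_{C_s}(\Omega)}$, testing the VI at $v = w$, and applying the bounds $\gamma_*\norm{u}^2_{H^s_0(\Omega)} \leq \langle \mathcal{L}^s_g u, u\rangle \leq \langle \mathcal{L}^s_g u, w\rangle \leq \gamma^*\norm{u}_{H^s_0(\Omega)}\norm{w}_{H^s_0(\Omega)}$ delivers \eqref{ComparisonCap2} upon letting $\epsilon \to 0$.

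To construct $\mu_{s,g}$, testing the VI with $v = u + \varphi$ for $\varphi \in C_c^\infty(\Omega)$, $\varphi \geq 0$, gives $\langle \mathcal{L}^s_g u, \varphi\rangle \geq 0$, so by Riesz--Schwartz there exists a non-negative Radon measure $\mu_{s,g}$ on $\Omega$ satisfying $\langle \mathcal{L}^s_g u, \varphi\rangle = \int_\Omega \varphi\, d\mu_{s,g}$ for such $\varphi$. Concentration on $\{u = \psi\}$ follows from the standard localisation: for $\varphi \in C_c^\infty(\Omega)$ supported in the quasi-open set $\{\bar{u} > \psi\}$, both $u \pm \epsilon\varphi \in \bar{\mathbb{K}}^s$ for $\epsilon > 0$ sufficiently small, whence $\int \varphi\, d\mu_{s,g} = 0$ and $\mu_{s,g}(\{\bar{u} > \psi\}) = 0$. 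Since $u \geq 0$ q.e., one has $\{u = \psi\} \subset \{\psi \geq 0\} = \{\psi = \psi^+\}$, yielding the identification $\{u = \psi\} = \{u = \psi^+\}$.

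For the capacity estimate \eqref{ComparisonCap4}, fix compact $E \Subset \Omega$ and let $u_E$ denote its $(s, G_{:})$-capacitary potential from Theorem \ref{RadonMeasure}, so $\bar{u}_E \geq \mathbf{1}_E$ q.e. Once \eqref{ComparisonCap3} is tested against $u_E$ (via monotone approximation by $\varphi \geq \mathbf{1}_E$ with $\varphi \in C_c^\infty(\Omega)$), one obtains $\mu_{s,g}(E) \leq \int_\Omega \bar{u}_E\, d\mu_{s,g} = \langle \mathcal{L}^s_g u, u_E\rangle \leq \gamma^*\norm{u}_{H^s_0(\Omega)}\norm{u_E}_{H^s_0(\Omega)}$ by Cauchy--Schwarz with $g \leq \gamma^*$, and combining \eqref{ComparisonCap2} with $\gamma_*\norm{u_E}^2_{H^s_0(\Omega)} \leq C_s^g(E)$ produces the stated constant $\gamma^{*2}/\gamma_*^{3/2}$. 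This inequality shows $\mu_{s,g}$ does not charge sets of capacity zero, so $\mu_{s,g} \in H^{-s}_\prec(\Omega) = [L^2_{C_s}(\Omega)]'$, whence \eqref{ComparisonCap3} extends to every $v \in H^s_0(\Omega)$ by the density of $H^s_0(\Omega) \cap C_c(\Omega)$ in $L^2_{C_s}(\Omega)$ (Proposition 5.2 of \cite{FracObsRiesz}). The main technical obstacle will be the simultaneous bookkeeping of quasi-continuous representatives and the mild circularity between \eqref{ComparisonCap4} and the $H^s_0(\Omega)$-extension of \eqref{ComparisonCap3}, which must be resolved by first proving the capacity bound through smooth approximation from above and only then upgrading the measure identity to general elements of $H^s_0(\Omega)$, following the strategy already used in the linear setting of \cite{FracObsRiesz}.
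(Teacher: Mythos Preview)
Your proposal is correct and follows essentially the same route as the paper: existence via convex analysis, non-negativity by testing with $v=u^+=u+u^-$, the norm bound \eqref{ComparisonCap2} from coercivity against an arbitrary admissible $w\geq\psi^+$, the measure $\mu_{s,g}$ via Riesz--Schwartz, and \eqref{ComparisonCap4} by pairing against capacitary competitors. One minor difference worth noting: for \eqref{ComparisonCap4} the paper takes an arbitrary $v\in\mathbb{K}^s_E$, passes to the infimum to get $C_s(E)$, and then invokes Theorem~\ref{CapProp5.4} to convert $C_s(E)$ into $C_s^g(E)$; your choice of the $(s,G_{:})$-capacitary potential $u_E$ together with the direct coercivity bound $\gamma_*\norm{u_E}_{H^s_0(\Omega)}^2\leq\langle\mathcal{L}^s_g u_E,u_E\rangle=C_s^g(E)$ reaches the same constant without the detour through $C_s$. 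You are also more explicit than the paper about the order in which \eqref{ComparisonCap3} and \eqref{ComparisonCap4} must be established, which is a genuine clarification rather than a deviation.
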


\begin{proof}
By the maximum principle given in Theorem \ref{EllipticObsProbThm}, taking $v=u+u^-$, the solution is non-negative. Hence, the variational inequality \eqref{ComparisonCap1} is equivalent to solving the variational inequality with $\bar{\mathbb{K}}^s=\bar{\mathbb{K}}^s_\psi$ replaced by $\bar{\mathbb{K}}_{\psi^+}^s$. Since $\psi^+\in L^2_{C_s}(\Omega)$, by definition, $\bar{\mathbb{K}}_{\psi^+}^s\neq\emptyset$ and we can apply the Stampacchia theorem to obtain a unique non-negative solution. From \eqref{ComparisonCap1} it follows
\[\gamma_*\norm{u}_{H^s_0(\Omega)}^2\leq\langle\mathcal{L}_g^su,u\rangle\leq\langle\mathcal{L}_g^su,v\rangle\leq \gamma^*\norm{u}_{H^s_0(\Omega)}\norm{v}_{H^s_0(\Omega)},\]
and we have \[\norm{u}_{H^s_0(\Omega)}\leq(\gamma^*/\gamma_*)\norm{v}_{H^s_0(\Omega)},\quad\forall v\in \bar{\mathbb{K}}_{\psi^+}^s,\] giving \eqref{ComparisonCap2}, by using the definition of the $L^2_{C_s}(\Omega)$-norm of $\psi^+$.

The existence of a Radon measure for \eqref{ComparisonCap3} follows exactly as in Theorem \ref{RadonMeasure}. Finally, recalling the definitions, it is sufficient to prove \eqref{ComparisonCap4} for any compact subset $E\subset\Omega$. But this follows from \[\mu_{s,g}(E)\leq\int_\Omega\bar{v}\,d\mu_{s,g}=\langle\mathcal{L}_g^su,v\rangle\leq \gamma^*\norm{u}_{H^s_0(\Omega)}\norm{v}_{H^s_0(\Omega)}\leq\frac{\gamma^{*2}}{\gamma_*}\norm{\psi^+}_{L^2_{C_s}(\Omega)}\norm{v}_{H^s_0(\Omega)},\quad\forall v\in \mathbb{K}_E^s.\] Now, recall from Proposition \ref{CapProp5.4} that we have \[C_s^g(E)\geq \gamma_*C_s(E)=\gamma_*\inf_{v\in \mathbb{K}_E^s}\norm{v}_{H^s_0(\Omega)}^2\] thereby obtaining \eqref{ComparisonCap4}.
\end{proof}

\begin{corollary}
If $u$ and $\hat{u}$ are the solutions to \eqref{ComparisonCap1} with non-negative compatible obstacles $\psi$ and $\hat{\psi}$ in $L^2_{C_s}(\Omega)$ respectively, then \[\norm{u-\hat{u}}_{H^s_0(\Omega)}\leq k\|\psi-\hat{\psi}\|_{L^2_{C_s}(\Omega)}^{1/2},\] where $k=(\gamma^*/\gamma_*)\left[\norm{\psi}_{L^2_{C_s}(\Omega)}+\|\hat{\psi}\|_{L^2_{C_s}(\Omega)}\right]^{1/2}.$
\end{corollary}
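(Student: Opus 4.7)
The plan is to construct admissible test functions by translating each solution by an $H^s_0(\Omega)$-element that dominates $|\psi-\hat\psi|$ quasi-everywhere, and then combine the coercivity and Lipschitz estimates of $\mathcal{L}_g^s$ from Proposition \ref{LgCoercive} with the a priori bound \eqref{ComparisonCap2} of the preceding theorem.

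First, I would fix any $\eta\in H^s_0(\Omega)$ with $\bar\eta\geq|\psi-\hat\psi|$ q.e.\ in $\Omega$; the very definition of the $L^2_{C_s}(\Omega)$-norm guarantees such an $\eta$ exists, with $\|\eta\|_{H^s_0(\Omega)}$ arbitrarily close to $\|\psi-\hat\psi\|_{L^2_{C_s}(\Omega)}$, and automatically $\eta\geq 0$ q.e. Setting $v=\hat u+\eta$ and $w=u+\eta$, the bounds $\bar\eta\geq\psi-\hat\psi$ and $\bar\eta\geq\hat\psi-\psi$, combined with $\hat u\geq\hat\psi$ and $u\geq\psi$ q.e., yield $v\in\bar{\mathbb{K}}^s_\psi$ and $w\in\bar{\mathbb{K}}^s_{\hat\psi}$, respectively.

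Second, I would substitute $v$ into \eqref{ComparisonCap1} for $u$ and $w$ into the corresponding inequality for $\hat u$, and add the two. After the $\eta$-terms are collected, this gives
\[
\langle \mathcal{L}_g^s u-\mathcal{L}_g^s\hat u,\,u-\hat u\rangle\;\leq\;\langle \mathcal{L}_g^s u+\mathcal{L}_g^s\hat u,\,\eta\rangle.
\]
By the strict coercivity in Proposition \ref{LgCoercive} the left-hand side dominates $\gamma_*\|u-\hat u\|_{H^s_0(\Omega)}^2$, while the Lipschitz bound applied to each pairing on the right (noting that $\mathcal{L}_g^s 0=0$) gives the upper bound $\gamma^*\bigl(\|u\|_{H^s_0(\Omega)}+\|\hat u\|_{H^s_0(\Omega)}\bigr)\|\eta\|_{H^s_0(\Omega)}$.

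Third, I would take the infimum over all admissible $\eta$, which by the definition of $\|\cdot\|_{L^2_{C_s}(\Omega)}$ replaces $\|\eta\|_{H^s_0(\Omega)}$ by $\|\psi-\hat\psi\|_{L^2_{C_s}(\Omega)}$. Since $\psi,\hat\psi\geq 0$, the a priori estimate \eqref{ComparisonCap2} bounds $\|u\|_{H^s_0(\Omega)}+\|\hat u\|_{H^s_0(\Omega)}\leq (\gamma^*/\gamma_*)\bigl(\|\psi\|_{L^2_{C_s}(\Omega)}+\|\hat\psi\|_{L^2_{C_s}(\Omega)}\bigr)$, and taking a square root yields
\[
\|u-\hat u\|_{H^s_0(\Omega)}\leq\frac{\gamma^*}{\gamma_*}\bigl[\|\psi\|_{L^2_{C_s}(\Omega)}+\|\hat\psi\|_{L^2_{C_s}(\Omega)}\bigr]^{1/2}\|\psi-\hat\psi\|_{L^2_{C_s}(\Omega)}^{1/2},
\]
the desired inequality. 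The only delicate point is the q.e.-admissibility of $v$ and $w$; choosing the bound on $\bar\eta$ in terms of $|\psi-\hat\psi|$ (rather than $(\psi-\hat\psi)^+$, which need not belong to $H^s_0(\Omega)$) is what makes this step work, and it is precisely this interplay between the quasi-everywhere order relation and the $L^2_{C_s}$-norm that forces the square-root exponent in the final estimate.
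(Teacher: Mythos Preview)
Your proof is correct and reaches the stated estimate, but it takes a different route from the paper's argument. The paper exploits the measure representation \eqref{ComparisonCap3} from the preceding theorem: writing $\mu_{s,g}=\mathcal{L}_g^su$ and $\hat\mu_{s,g}=\mathcal{L}_g^s\hat u$, it uses that these measures are concentrated on the respective coincidence sets $\{u=\psi\}$ and $\{\hat u=\hat\psi\}$ to estimate $\langle\mathcal{L}_g^su-\mathcal{L}_g^s\hat u,u-\hat u\rangle=\int(u-\hat u)\,d\mu_{s,g}-\int(u-\hat u)\,d\hat\mu_{s,g}\leq\int|\psi-\hat\psi|\,d(\mu_{s,g}+\hat\mu_{s,g})$, and only then introduces a $w\in\bar{\mathbb{K}}^s_{|\psi-\hat\psi|}$ to convert back to a duality pairing. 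Your approach bypasses the Radon measure machinery entirely, working directly with the variational inequalities by shifting each solution by a common $\eta\geq|\psi-\hat\psi|$ to produce admissible competitors; this is more elementary and would work even without the preceding theorem's measure-theoretic conclusions. The paper's route, on the other hand, highlights the potential-theoretic structure (support of the measures on the contact sets) that is the main theme of the section. Both arguments rely on the same coercivity lower bound, the same boundedness of $\mathcal{L}_g^s$, and the same a~priori estimate \eqref{ComparisonCap2}, so the resulting constant is identical.
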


\begin{proof}
Since $supp(\mu_{s,g})\subset\{u=\psi\}$ and $supp(\hat{\mu}_{s,g})\subset\{\hat{u}=\hat{\psi}\}$ (where $\mu_{s,g}=\mathcal{L}_g^su$ and $\hat{\mu}_{s,g}=\mathcal{L}_g^s\hat{u}$), for an arbitrary $w\in\bar{\mathbb{K}}^s_{|\psi-\hat{\psi}|}$, by setting $v=u-\hat{u}$ in \eqref{ComparisonCap3} for $\mu_{s,g}$ and for $\hat{\mu}_{s,g}$, we have 

\begin{align*}\gamma_*\norm{u-\hat{u}}_{H^s_0(\Omega)}^2&\leq\langle\mathcal{L}_g^su-\hat{u},u-\hat{u}\rangle=\langle\mathcal{L}_g^su,u-\hat{u}\rangle-\langle\mathcal{L}_g^s\hat{u},u-\hat{u}\rangle\\&=\int_\Omega(u-\hat{u})\,d\mu_{s,g}-\int_\Omega(u-\hat{u})\,d\hat{\mu}_{s,g}\leq\int_\Omega(\psi-\hat{\psi})\,d\mu_{s,g}-\int_\Omega(\psi-\hat{\psi})\,d\hat{\mu}_{s,g}\\&\leq\int_\Omega|\psi-\hat{\psi}|\,d(\mu_{s,g}+\hat{\mu}_{s,g})\leq\int_\Omega w\,d(\mu_{s,g}+\hat{\mu}_{s,g}) \\&=\int_\Omega w\,d\mu_{s,g}+\int_\Omega w\, d\hat{\mu}_{s,g}=\langle\mathcal{L}_g^su,w\rangle+\langle\mathcal{L}_g^s\hat{u},w\rangle\\&\leq \gamma^*\left[\norm{u}_{H^s_0(\Omega)}+\norm{\hat{u}}_{H^s_0(\Omega)}\right]\norm{w}_{H^s_0(\Omega)}\\&\leq\frac{\gamma^{*2}}{\gamma_*}\left[\norm{\psi}_{L^2_{C_s}(\Omega)}+\|\hat{\psi}\|_{L^2_{C_s}(\Omega)}\right]\norm{w}_{H^s_0(\Omega)}\text{ by \eqref{ComparisonCap2}}.\
\end{align*}

Since $w$ is arbitrary in $\bar{\mathbb{K}}^s_{|\psi-\hat{\psi}|}$, the conclusion follows by the definition of the norm of $|\psi-\hat{\psi}|$ in $L^2_{C_s}(\Omega)$.
\end{proof}

\bibliographystyle{plain}
\bibliography{ref.bib}

\end{document}